\newtheorem{thm}{Theorem}[section]
\newtheorem*{main}{Main Theorem 1}
\newtheorem*{main2}{Main Theorem 2}
\newtheorem{prop}[thm]{Proposition}
\newtheorem{lem}[thm]{Lemma}
\newtheorem{cor}[thm]{Corollary}
\theoremstyle{definition}
\newtheorem{df}[thm]{Definition}
\theoremstyle{remark}
\newtheorem{rem}[thm]{Remark}
\newcommand{\dG}{\widehat G}
\newcommand{\cO}{\mathcal O}
\newcommand{\drho}{\widehat \rho}
\newcommand{\dlambda}{\widehat \lambda}
\newcommand{\End}{\operatorname{\rm End}}
\newcommand{\Hom}{\operatorname{\rm Hom}}
\newcommand{\Ann}{\operatorname{\rm Ann}}
\newcommand{\fg}{\mathfrak g}
\newcommand{\fh}{\mathfrak h}
\newcommand{\fn}{\mathfrak n}
\newcommand{\fb}{\mathfrak b}
\newcommand{\daZ}{\pi i \log(q_\alpha)^{-1} {\mathbb Z}}
\newcommand{\dD}{\widehat \Delta}
\newcommand{\de}{\widehat \varepsilon}
\newcommand{\dS}{\widehat S}
\newcommand{\dphi}{\widehat \varphi}
\newcommand{\frsl}{\mathfrak{sl}}
\newcommand{\cU}{\mathcal U}
\newcommand{\cP}{\mathcal P}
\newcommand{\bH}{\mathbb H}
\newcommand{\ad}{\mathrm{ad}}
\renewcommand{\Re}{\mathop{\rm Re}}
\newcommand{\CGq}{\mathrm{Char}(G_q)}
\newcommand{\CqGq}{\mathrm{Char}_q(G_q)}
\begin{document}
	\title[Unitary spherical representations of Drinfeld doubles]{Unitary spherical representations \\
	of Drinfeld doubles}
	\author{Yuki Arano}
	\address{Department of Mathematical Sciences, the University of Tokyo, Komaba, Tokyo, 153-8914, Japan}
	\email{arano@ms.u-tokyo.ac.jp}
	\date{}
	\begin{abstract}
	We study irreducible spherical unitary representations of the Drinfeld double of the $q$-deformation of a connected simply connected compact Lie group,
	which can be considered as a quantum analogue of the complexification of the Lie group.
	In the case of $SU_q(3)$, we give a complete classification of such representations.
	As an application, we show the Drinfeld double of the quantum group $SU_q(2n+1)$ has property (T), which also implies central property (T) of the dual of $SU_q(2n+1)$.
	\end{abstract}
	\maketitle
	\section{Introduction}
	As has been observed by many authors (see, e.g., \cite{Kra}, \cite{PW}), the Drinfeld double of the $q$-deformation of a compact Lie group can be regarded as a quantization of the complexification of the original Lie group. Using this point of view, in this paper we study irreducible spherical unitary representations of these Drinfeld doubles and obtain a complete classification of such representations in the case of $SU_q(3)$.
	Our first main result is as follows. (cf.\ \cite{Duf} for the classical case.)
	\begin{main}
	Fix $0 < q < 1$ and let
	\[\fh^* := \{(\nu_1,\nu_2,\nu_3) \in {\mathbb C}^3 \mid \nu_1 + \nu_2 + \nu_3 = 0\},\]
	\[Q := \fh^* \cap {\mathbb Z}^3, P := \{\nu \in \fh^* \mid (\nu,\mu) \in {\mathbb Z} \text{ for } \mu \in Q\},\]
	\[X := \fh^* / 2 \pi i \log(q)^{-1} Q.\]
	Consider the natural permutation group $W = S_3$-action on $X$.
	Then the irreducible unitary spherical representations of the Drinfeld double of $SU_q(3)$ are parametrized by $\nu \in X/W$ such that
	\begin{enumerate}[{\rm (i)}]
	\item $\nu$ is imaginary (unitary principal series),
	\item $\nu = (t+is,-t+is,-2is)$ modulo $2\pi i \log(q)^{-1} P \times W$ for $t \in [-1,1]$ and $s \in {\mathbb R}$ (complementary series) or
	\item $\nu = (2,-2,0)$ modulo $2\pi i \log(q)^{-1} P \times W$ (characters including the trivial).
	\end{enumerate}
	\end{main}
	We note that the classification of irreducible unitary representations of the Drinfeld double of the  $q$-deformation is initiated by Pusz \cite{Pus} in the case of $SU_q(2)$. 
	In \cite{VY}, Voigt and Yuncken independently obtained similar constructions of irreducible unitary principal series also in nonspherical cases.

	As another consequence, we show the isolation of the trivial representation in the case of $SU_q(2n+1)$.
	\begin{main2}
	Fix $0 < q < 1$. For $n=1,2,\dots$, the Drinfeld double of $SU_q(2n+1)$ has property ${\rm (T)}$.
	\end{main2}

	Let us explain our motivation from operator algebras.
	The notion of compact quantum groups was introduced by Woronowicz \cite{Wor} and has had a great influence in the theory of operator algebras.
	Among properties of compact (equivalently, discrete) quantum groups, approximation properties have a strong link with the structures of the corresponding operator algebras, for example, \cite{Bra}, \cite{Fre} and \cite{Iso}.

	In \cite{DFY}, De Commer, Freslon and Yamashita pointed out that the approximation properties for the Drinfeld doubles of compact quantum groups appeared to be closely related to that of the dual discrete quantum group.
	In the same paper, they showed the central Haagerup property and the central CCAP for the dual of $SU_q(2)$.
	A main application of the result in this paper is to show central property (T), which is contrary to the central Haagerup property, for the dual of $SU_q(2n+1)$, although it is amenable and has commutative fusion rules.

	{\bf Acknowledgement} The author wishes to express his gratitude to Kenny De Commer, Hironori Oya and Makoto Yamashita for many fruitful discussions.
	He is grateful to Kenny De Commer, Yasuyuki Kawahigashi, Christian Voigt, Makoto Yamashita and Robert Yuncken for many valuable comments and pointing out mistakes and typos in the draft version of this paper. He also appreciates the supervision of Yasuyuki Kawahigashi.
	He is also greatly indebted to the anonymous referee and Joachim Cuntz for correcting a large number of errors in an earlier version of the manuscript.
	This work was supported by the Research Fellow of the Japan
Society for the Promotion of Science and the Program for Leading Graduate Schools, MEXT, Japan
	\section{Preliminaries}
	\subsection{Quantized enveloping algebras}
	Throughout this paper, the field is $\mathbb C$. By ${\mathbb Z}_+$, we mean $\{1,2,\dots\}$. We fix $0 < q  < 1$.

	Let $G$ be a connected simply connected compact Lie group, $\fg$ the complexification of its Lie algebra, $\fh$ a Cartan subalgebra of $\fg$, $\Delta$ the set of roots, $Q \subset \fh^*$ the root lattice and $P \subset \fh^*$ the weight lattice.
	Let $(\cdot,\cdot)$ be the natural bilinear form on $\fh$, which is normalized as $(\alpha,\alpha)=2$ for a short root $\alpha$. For each $\alpha \in \Delta$, let $\alpha^\vee := 2\alpha/(\alpha,\alpha)$ be the coroot. We fix a set $\Pi$ of simple roots and let $\Delta_+(Q_+, P_+)$ be the set of positive roots (positive elements in the root lattice, positive weights) with respect to $\Pi$.

	Put $q_\alpha := q^{(\alpha,\alpha)/2}$,
	\[n_q := \frac{q^n - q^{-n}}{q - q^{-1}},\]
	\[n_q ! := n_q (n-1)_q \dots 1_q,\]
	\[\left( \begin{matrix} n \\ m \end{matrix} \right)_q := \frac{n_q!}{m_q! (n-m)_q!}.\]
	\begin{df}
	The quantized enveloping algebra $U_q(\fg)$ is the algebra defined by generators $\{K_\lambda, E_\alpha, F_\alpha \mid \lambda \in P, \alpha \in \Pi\}$ and relations
	\[K_0 = 1, \quad K_\lambda K_\mu = K_{\lambda + \mu},\]
	\[K_\lambda E_\alpha K_{-\lambda} = q^{(\alpha,\lambda)} E_\alpha,\quad K_\lambda F_\alpha K_{-\lambda} = q^{-(\alpha,\lambda)} F_\alpha,\]
	\[[E_\alpha,F_\beta] = \delta_{\alpha,\beta} \frac{K_\alpha - K_{-\alpha}}{q_\alpha - q_\alpha^{-1}},\]
	\[\sum_{r=0}^{1-(\alpha,\beta^\vee)} (-1)^r \left( \begin{matrix} 1-(\alpha,\beta^\vee) \\ r \end{matrix} \right)_{q_\beta} E_\beta^r E_\alpha E_\beta^{1-(\alpha,\beta^\vee)-r} = 0,\]
	\[\sum_{r=0}^{1-(\alpha,\beta^\vee)} (-1)^r \left( \begin{matrix} 1-(\alpha,\beta^\vee) \\ r \end{matrix} \right)_{q_\beta} F_\beta^r F_\alpha F_\beta^{1-(\alpha,\beta^\vee)-r} = 0.\]
	One may define a Hopf $*$-algebra structure on $U_q(\fg)$ by
	\[\dD(K_\lambda) = K_\lambda \otimes K_\lambda, \quad \de(K_\lambda) = 1, \quad \dS(K_\lambda) = K_{-\lambda},\]
	\[\dD(E_\alpha) = E_\alpha \otimes 1 + K_\alpha \otimes E_\alpha,\quad \de(E_\alpha) = 0,\quad \dS(E_\alpha) = -K_{-\alpha} E_\alpha,\]
	\[\dD(F_\alpha) = F_\alpha \otimes K_{-\alpha} + 1 \otimes F_\alpha,\quad \de(F_\alpha)=0,\quad \dS(F_\alpha) = -F_\alpha K_\alpha,\]
	and the $*$-structure is given by
	\[K_\lambda^* = K_\lambda,\quad E_\alpha^* = F_\alpha K_\alpha,\quad F_\alpha^* = K_{-\alpha}E_\alpha.\]
	\end{df}
	Let $U_q(\fn^+)$(resp.\ $U_q(\fh)$, $U_q(\fn^-)$) be the subalgebra of $U_q(\fg)$ generated by $E_\alpha$'s (resp.\ $K_\lambda$'s, $F_\alpha$'s). Then the multiplication
	\[U_q(\fn^-) \otimes U_q(\fh) \otimes U_q(\fn^+) \to U_q(\fg)\]
	is an isomorphism as vector spaces.
	Through this isomorphism, $\cP := \de|_{U_q(\fn^-)} \otimes {\rm id} \otimes \de|_{U_q(\fn^+)}$ is a  projection onto $U_q(\fh)$.

	For each $\lambda \in \fh^*$, let $V(\lambda)$ be the unique irreducible highest module of highest weight $\lambda$, that is, there exists nonzero $v_\lambda \in V(\lambda)$ such that
	\[K_\mu v_\lambda = q^{(\mu,\lambda)} v_\lambda, E_\alpha v_\lambda = 0.\]
	We also denote the corresponding representation by $\pi^\lambda$.
	If $\lambda \in P_+$, then $V(\lambda)$ is finite dimensional. We say a $U_q(\fg)$-module is of {\it type 1} if it decomposes into a direct sum of $V(\lambda)$'s for $\lambda \in P_+$. Notice that any subquotient of type 1 module is also of type 1.

	On the other hand, if $\mu \in \pi i \log(q)^{-1} P$, $V(\mu)$ is $1$-dimensional. Hence for $\lambda \in \fh^*$ and $\mu \in \pi i \log(q)^{-1} P$,
	\[V(\lambda + \mu) = V(\lambda) \otimes V(\mu).\]
	In particular, for $\lambda \in P_+ + \pi i \log (q)^{-1} P$, $V(\lambda)$ is finite dimensional. 
	Notice that
	\[V(\lambda + \mu)^* \otimes V(\lambda + \mu) \simeq V(\lambda)^* \otimes V(\lambda)\]
	for $\lambda \in P_+$.

	For any $U_q(\fg)$-module $V$, we denote the isotypical component of $V(\lambda)$ by $V^\lambda$ and the weight space of weight $\mu$ by $V_\mu$. Let $[V:V(\lambda)]$ be the multiplicity of $V(\lambda)$ in $V$.
	\subsection{Quantum coordinate algebra}
	Let $A,B$ be Hopf algebras. A {\it skew pairing} between $A$ and $B$ is a map
	\[A \times B \to {\mathbb C}\]
	such that
	\[(ab,c) = (a \otimes b, \Delta_B(c)),\]
	\[(a,cd) = (\Delta_A(a), d \otimes c),\]
	\[(1,c) = \varepsilon_A(c),\]
	\[(a,1) = \varepsilon_B(a),\]
	for $a,b \in A$, $c,d \in B$.

	If $A,B$ are Hopf $*$-algebras, we also assume
	\[(a^*,b) = \overline{(a,S(b)^*)}.\]

	For a pair of Hopf algebras with a skew pairing, one defines the following actions:
	for $a \in A$ and $b\in B$
	\begin{align*}
	a \triangleright b & := (a,b_{(2)}) b_{(1)}, & b \triangleleft a & := (a,b_{(1)}) b_{(2)}, \\
	b \triangleright a & := (a_{(1)},b) a_{(2)}, & a \triangleleft b & := (a_{(2)},b) a_{(1)}.
	\end{align*}
	Here we used the sumless Sweedler notation:
	\[\Delta(x) = x_{(1)} \otimes x_{(2)}.\]
	\begin{df}
	Let $\cO(G_q) \subset U_q(\fg)^*$ be the subspace of matrix coefficients of type 1 representations. Then $\cO(G_q)$ carries a unique Hopf $*$-algebra structure which makes the pairing $\cO(G_q) \times U_q(\fg) \to {\mathbb C}$ skew.

	Now for each type 1 module $V$, $v \in V$ and $l \in V^*$,
	\[c^V_{l,v}(x) := (xv,l)\]
	defines an element in $\cO(G_q)$. In the case of $V = V(\lambda)$, we shall write $c^\lambda_{l,v}$ instead of $c^{V(\lambda)}_{l,v}$.
	 Then the map
	\[\bigoplus_{\lambda \in P_+} V(\lambda) \otimes V(\lambda)^* \to \cO(G_q): v \otimes l \mapsto c^\lambda_{l,v}\]
	is an isomorphism of $U_q(\fg)$-modules.

	Let $\cO(T) := \cO(G_q)/\Ann(U_q(\fh))$. Then $\cO(T)$ can be identified with the algebra of regular functions on the maximal torus of $G$. Denote the canonical surjection $\cO(G_q) \to \cO(T)$ by $\pi_T$.

	Let $\Theta$ be the isomorphism
	\[\Theta:{\mathbb C}[K_{2\lambda} \mid \lambda \in P] \to \cO(T)\]
	defined by
	\[(\Theta(K_{2\lambda}),K_\mu) = q^{(\lambda,\mu)}.\]
	\end{df}
	\begin{df}
	Let $\displaystyle \cU(G_q) := \prod_{\lambda \in P_+} {\rm End}(V(\lambda))$ be the full dual of $\cO(G_q)$ and $\displaystyle c_c(\dG_q) := \bigoplus_{\lambda \in P_+} {\rm End}(V(\lambda)) \subset \cU(G_q)$. Then one can embed $U_q(\fg)$ into $\cU(G_q)$ and $c_c(\dG_q)$ is an ideal of $\cU(G_q)$.
	
	One can easily show that there is a one-to-one correspondence among
	\begin{enumerate}[{\rm (1)}]
	\item type 1 representations of $U_q(\fg)$,
	\item nondegenerate representations of $c_c(\dG_q)$ and
	\item continuous representations of $\cU(\dG_q)$.
	\end{enumerate}
	\end{df}
	\begin{rem}
	For any $\nu \in \fh^*$, $K_\nu$ makes sense as an element in $\cU(G_q)$ by the formula
	\[K_\nu v = q^{(\nu,{\rm wt}(v))}v\]
	for each weight vector $v \in V(\lambda)$. Then we again have
	\[K_\nu K_\mu = K_{\nu + \mu}\]
	for any $\nu,\mu \in \fh^*$. Moreover $K_{2\pi i \log(q)^{-1} \mu} = 1$
	for any $\mu \in Q^\vee$ shows $K_\nu$ actually makes sense for any $\nu \in X := \fh^*/2\pi i \log(q)^{-1}Q^\vee$. Then elements in $X$ are in a one-to-one correspondence with $1$-dimensional representations on $\cO(T)$.
	The character $K_\nu$ is a $*$-character if and only if $\nu \in i \fh^*_{\mathbb R}$.
	Notice that the Weyl group $W$ acts on $X$ in a natural way.
	\end{rem}
	Let us define the following central projections on $\cO(G_q)$
	\[p^\lambda := 1_\lambda \in \End(V(\lambda)) \subset c_c(G_q)\]
	and let $\varphi := p^0$ be the Haar state. For a type 1 $U_q(\fg)$-module $V$, $p^\lambda$ is nothing but the projection onto $V^\lambda$.

	We have
	\[\varphi \omega = \omega \varphi = \omega(1) \varphi\]
	for any $\omega \in \cU(\dG_q)$.

	(The universal C*-completion of) $\cO(G_q)$ is a compact quantum group in the sense of \cite{Wor}. In our notation, the modular automorphism of $\cO(G_q)$ is given by
	\[\sigma_t(x) = K_{-2it\rho} \triangleright x \triangleleft K_{-2it\rho},\]
	where $\rho$ is the half sum of positive roots: $\displaystyle \rho = \frac{1}{2} \sum_{\alpha \in \Delta_+} \alpha$.
	We also have
	\[S^2(x) = K_{-2\rho} \triangleright x \triangleleft K_{2\rho}.\]
	In particular,
	\[\sigma_i(x) = K_{2\rho} \triangleright x \triangleleft K_{2\rho} = S^2(K_{4\rho} \triangleright x)\]
	and hence
	\[\varphi(yx) = \varphi(S^2(K_{4\rho} \triangleright x)y),\]
	which can be rewritten as
	\[x \triangleright \varphi = \varphi \triangleleft (S^2(K_{4\rho} \triangleright x)).\]
	\subsection{Adjoint actions}\label{sec-ad}
	Let us recall some of the results which appeared in \cite{Bau} and \cite{Jos}.
	(Notice that their results are on the field $k(q)$, but the same proofs work for a fixed parameter $q \in {\mathbb C}^\times$ as long as $q$ is not a root of unity.)

	Recall the adjoint action of $U_q(\fg)$ on itself
	\[\ad(\omega) \mu := \omega_{(1)} \mu \dS(\omega_{(2)})\]
	and the coadjoint action on $\cO(G_q)$
	\[\ad(\omega) x := \omega_{(2)} \triangleright x \triangleleft \dS(\omega_{(1)}).\]
	Let $R \in \cU(G_q \times G_q) := \prod_{\lambda,\mu \in P_+} {\rm End}(V(\lambda) \otimes V(\mu))$ be the $R$-matrix. Namely, $R$ is given by the formula
	\[R := q^{\sum_{\alpha,\beta \in \Pi} (B^{-1})_{\alpha,\beta} H_\alpha \otimes H_\beta} \prod_{\alpha \in \Delta_+} \exp_{q_\alpha} ((1-q_\alpha^{-2}) F_\alpha \otimes E_\alpha).\]
	Here $B$ is the matrix $((\alpha^\vee,\beta^\vee))_{\alpha,\beta}$, $H_\alpha$ is the self-adjoint element in $\cU(G_q)$ which satisfies $q_\alpha^{H_\alpha} = K_\alpha$, $E_\alpha, F_\alpha$ are the PBW basis corresponding to $\alpha$ and
	\[\exp_q(x) := \sum_{n=0}^\infty q^{n(n+1)/2} \frac{x^n}{n_q !}.\]
	For our purpose, we do not need the whole formula, but the fact that $R$ is a sum of elements in $U_q(\fb^-) \otimes U_q(\fb^+)$ and
	\[({\rm id} \otimes \cP)(R) = (\cP \otimes {\rm id})(R) = q^{\sum_{\alpha,\beta \in \Pi} (B^{-1})_{\alpha,\beta} H_\alpha \otimes H_\beta}.\]

	Define $I: \cO(G_q) \to U_q(\fg)$ by
	\[I(x) := (x \otimes {\rm id})(R_{21} R_{12}).\]
	\begin{lem}\label{thm Bau}\cite[Theorem 3]{Bau} We have the following.
	\begin{enumerate}[{\rm (i)}]
	\item The map $I$ is an injective $U_q(\fg)$-module homomorphism.
	\item $I(zx) = I(z)I(x)$ for $z \in \cO(\CGq)$ and $x \in \cO(G_q)$.
	\item $\cP \circ I = \Theta^{-1} \circ \pi_T$.
	\end{enumerate}
	\end{lem}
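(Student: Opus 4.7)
The plan is to treat each of the three statements separately, essentially following Baumann's original approach: each is an unpacking of the definition of $I$ using the quasi-triangular structure of $R$.

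For (i), equivariance amounts to the assertion that $R_{21}R_{12}$ commutes with $\Delta(U_q(\fg))$, which is immediate from quasi-triangularity $R\,\Delta(\omega) = \Delta^{\mathrm{op}}(\omega)\,R$ applied twice. Translating this commutation to $\cO(G_q)$ through the skew pairing yields $I(\ad(\omega)x) = \ad(\omega)I(x)$ after a short Sweedler computation. For injectivity I would use the isotypical decomposition $\cO(G_q) \cong \bigoplus_{\lambda \in P_+} V(\lambda)^* \otimes V(\lambda)$ and reduce, by equivariance, to showing that $I$ is nonzero on each block. On the block indexed by $\lambda$ this boils down to non-degeneracy of $(\pi^\lambda \otimes \mathrm{id})(R_{21}R_{12})$, which holds because on an irreducible this operator equals the square of the ribbon element, hence acts by an invertible scalar.

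Statement (ii) rests on the fact that any $z \in \cO(\CGq)$ is ad-invariant, so by (i) one has $I(z) \in Z(U_q(\fg))$. Multiplicativity then follows from a computation of $(\Delta \otimes \mathrm{id})(R_{21}R_{12}) = R_{31}R_{32}R_{13}R_{23}$ obtained from the two hexagon relations, combined with centrality of $I(z)$, which lets one reshuffle the middle factors so that the expression collapses into $I(z)I(x)$. Statement (iii) is a more direct computation: writing $R = K_R \cdot \Theta_R$ with $K_R$ the Cartan exponential $q^{\sum (B^{-1})_{\alpha,\beta}H_\alpha \otimes H_\beta}$ and $\Theta_R$ a sum of terms $F_I \otimes E_I$ of strictly nontrivial weight in $U_q(\fn^-) \otimes U_q(\fn^+)$, any contribution of a nontrivial $F_I$ or $E_I$ to the second tensor slot of $R_{12}$ or $R_{21}$ is killed by $\cP$ after PBW reordering, so only the Cartan part survives and
\[(\mathrm{id} \otimes \cP)(R_{21}R_{12}) = K_R^2 = q^{2\sum_{\alpha,\beta \in \Pi}(B^{-1})_{\alpha,\beta}H_\alpha \otimes H_\beta}.\]
In particular $\cP \circ I$ vanishes on $\ker \pi_T$ and hence factors through $\pi_T$; checking the induced map on the basis $\Theta(K_{2\lambda})$ of $\cO(T)$, and using the defining identity $(\Theta(K_{2\lambda}),K_\mu) = q^{(\lambda,\mu)}$, identifies it with $\Theta^{-1}$.

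The main obstacle will be (ii): the structural idea is clear, but the four-fold $R$-matrix product appearing on the right-hand side of the hexagon identity requires careful bookkeeping with three tensor slots, and one must track exactly where centrality of $I(z)$ is invoked in order to commute the factors past each other. Parts (i) and (iii) are by contrast essentially formal manipulations, given the structural properties of $R$ recalled in the paper.
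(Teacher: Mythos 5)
For part (iii), your strategy coincides with the paper's: reduce to computing $(\mathrm{id}\otimes\cP)(R_{21}R_{12})$ and show only the Cartan factor $K_R^2$ survives, then match against $\Theta^{-1}\circ\pi_T$. One phrase is misleading, though, and worth flagging because it points at the actual mechanism. The element sitting in the second tensor slot of $R_{21}R_{12}$ is a product $a\cdot b$ with $a\in U_q(\fb^-)$ coming from $R_{21}$ and $b\in U_q(\fb^+)$ coming from $R_{12}$; this is \emph{already} in triangular $(\fn^-\!,\fh,\fn^+)$ order, so no PBW reordering is performed. That is precisely what makes $\cP$ multiplicative on such products, $\cP(ab)=\cP(a)\cP(b)$, which is what the paper records and uses. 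If one \emph{did} reorder (say pushing $F$'s past $E$'s) the commutators would produce extra Cartan contributions, and the argument would not go through as stated. So ``killed after PBW reordering'' should be replaced by ``no reordering is needed, hence no Cartan cross-terms arise.'' With that adjustment your (iii) is the paper's proof.

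For (i) and (ii) the paper simply cites Baumann, so your sketches go beyond what the paper proves. Your equivariance argument in (i) is fine, but the injectivity argument has a genuine gap. Reducing to ``$I$ is nonzero on each Peter--Weyl block $V(\lambda)^*\otimes V(\lambda)$'' is not enough: these blocks are highly reducible as adjoint $U_q(\fg)$-modules (with multiplicities $\dim V(\mu)_0$), so an equivariant map can be nonzero on a block while still annihilating a proper adjoint-submodule of it; moreover, images of distinct blocks could a priori intersect inside $U_q(\fg)$, which would also destroy injectivity. The claim that $(\pi^\lambda\otimes\mathrm{id})(R_{21}R_{12})$ acts by an invertible \emph{scalar} is also incorrect: that operator acts on $V(\lambda)\otimes(\cdot)$ and is far from scalar; the scalar you have in mind is the quantum Casimir acting on the single factor $V(\lambda)$, which is a different object. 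The injectivity of $I$ (that the locally finite part $F(U_q(\fg))$ is a free module over the span of the blocks with the right multiplicities) is exactly the nontrivial content of Joseph--Letzter/Baumann and is what the citation carries. Your plan for (ii) is plausible in outline, and you correctly identify that centrality of $I(z)$ together with a hexagon-type identity for $(\Delta\otimes\mathrm{id})(R_{21}R_{12})$ is the engine, but as you acknowledge the bookkeeping is not carried out (and as a small point the identity should read $(\Delta\otimes\mathrm{id})(R_{21}R_{12})=R_{32}R_{31}R_{13}R_{23}$, not $R_{31}R_{32}R_{13}R_{23}$). Given the paper delegates (i) and (ii), it would be cleaner to do the same rather than offer partial sketches.
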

	\begin{proof}
	For {\rm (i)} and {\rm (ii)}, see \cite{Bau}.

	For {\rm (iii)}, notice that
	\[q^{\sum_{\alpha,\beta \in \Pi} (B^{-1})_{\alpha,\beta} H_\alpha \otimes H_\beta} (v \otimes w) = q^{({\rm wt}(v),{\rm wt}(w))} (v \otimes w)\]
	shows
	\[(c^\lambda_{l,v} \otimes {\rm id})(q^{\sum_{\alpha,\beta \in \Pi} (B^{-1})_{\alpha,\beta} H_\alpha \otimes H_\beta}) = (l,v) K_{{\rm wt}(v)}.\]

	Here, by the definition of $\cP$, we have
	\[\cP(ab) = \cP(a) \cP(b)\]
	for $a \in U_q(\fb^-)$ and $b \in U_q(\fb^+)$.
	Hence
	\begin{align*}
	(\cP \circ I)(c^\lambda_{l,v}) & = \cP(c^\lambda_{l,v} \otimes {\rm id})(R_{21} R_{12}) \\
	& = (c^\lambda_{l,v} \otimes {\rm id})(q^{\sum_{\alpha,\beta \in \Pi} 2(B^{-1})_{\alpha,\beta} H_\alpha \otimes H_\beta}) \\
	& = (v,l) K_{2 {\rm wt}(v)} \\
	& = (\Theta^{-1} \circ \pi_T)(c^\lambda_{l,v}).
	\end{align*}
	\end{proof}
	Set $F(U_q(\fg)) := I(\cO(G_q))$ and let $Z$ be the center of $F(U_q(\fg))$. Now let us recall the separation theorem in \cite{Jos}.
	\begin{thm}
	There exists an adjoint invariant subspace $\bH \subset F(U_q(\fg))$ such that the multiplication map
	\[Z \otimes \bH \to F(U_q(\fg))\]
	is an isomorphism.
	Moreover the multiplicity of $V(\lambda)$ in $\bH$ is $\dim V(\lambda)_0$.
	\end{thm}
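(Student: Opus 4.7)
The plan is to follow Joseph's approach, producing a quantum analogue of Kostant's separation theorem for $U(\fg)$.

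\emph{Harish-Chandra isomorphism.} First I would identify the center $Z$. By Lemma \ref{thm Bau}(iii), the composite $\cP \circ I$ equals $\Theta^{-1} \circ \pi_T$, so restricting it to ad-invariants gives an algebra homomorphism $Z \to \cO(T) \cong \mathbb{C}[K_{2\mu} \mid \mu \in P]$. Using that elements of $Z$ commute with all $K_\nu$ and hence produce $W$-invariant images on the torus (after the standard $\rho$-shift), one should obtain an isomorphism
\[Z \xrightarrow{\ \sim\ } \mathbb{C}[K_{2\mu} \mid \mu \in P]^{W\cdot},\]
i.e., the quantum Harish-Chandra isomorphism.

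\emph{Construction of $\bH$.} Next I would produce $\bH$ as an ad-invariant graded complement. Using the Peter-Weyl decomposition $\cO(G_q) \cong \bigoplus_{\lambda \in P_+} V(\lambda) \otimes V(\lambda)^*$ transported to $F(U_q(\fg))$ via the ad-equivariant injection $I$, one has a natural grading on $F(U_q(\fg))$. Choose inductively, in this grading, an ad-invariant complement $\bH$ to $Z_+ \cdot F(U_q(\fg))$, where $Z_+$ is the augmentation ideal of $Z$ under the Harish-Chandra isomorphism. The multiplication map $Z \otimes \bH \to F(U_q(\fg))$ is then surjective by construction.

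\emph{Freeness and multiplicity.} The remaining content is injectivity: $F(U_q(\fg))$ must be free over $Z$. The strategy is to pass to an associated graded where the quantum coordinate algebra degenerates to a polynomial algebra and classical Kostant theory applies, giving both freeness and the identification $[\bH : V(\lambda)] = \dim V(\lambda)_0$. Alternatively, since the ad-decomposition of $F(U_q(\fg))$ is $q$-independent and the Harish-Chandra isomorphism identifies $Z$ as a $q$-independent module, one can match multiplicities at $q = 1$ where Kostant's theorem directly supplies $\dim V(\lambda)_0$.

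The main obstacle is the freeness step: ensuring that the classical argument—which rests on the Kostant harmonic decomposition of $S(\fg)$ and the Chevalley restriction theorem—transfers to the quantum setting without losing control of the action of the center. This is the technical heart of Joseph's original argument, and reproducing it cleanly requires a careful filtration and weight-by-weight analysis; once this freeness is established, the multiplicity statement is immediate from the classical count.
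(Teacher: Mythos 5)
This theorem is not proved in the paper at all: the text simply says ``Now let us recall the separation theorem in \cite{Jos}'' and states the result as a cited fact (with \cite{Bau} also in the background, since the map $I$ and the algebra $F(U_q(\fg))$ are taken from there). So there is no proof in the paper to compare your argument against, and you should flag that anything you write here is a reconstruction of the argument in the references rather than something the paper actually carries out.

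As a reconstruction, your outline is reasonable at the level of strategy but leaves the only genuinely hard step open, and you acknowledge as much. Identifying $Z$ via a Harish-Chandra--type isomorphism and choosing $\bH$ as an ad-invariant complement to $Z_+ \cdot F(U_q(\fg))$ is indeed the Kostant-style skeleton. But surjectivity of $Z \otimes \bH \to F(U_q(\fg))$ is not automatic from ``complement to $Z_+ F$'' without a Nakayama-type or graded argument, and the crucial freeness is exactly where your sketch thins out. Both escape routes you suggest — degenerating the associated graded to a polynomial algebra, or matching multiplicities with the classical $q=1$ picture by appealing to ``$q$-independence of the ad-decomposition'' — are not how Joseph (or Baumann) actually closes the argument, and neither is obviously available: the first requires a specific filtration on $F(U_q(\fg))$ whose associated graded you would have to exhibit and analyze, and the second presupposes a flatness/specialization statement that is itself nontrivial. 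Joseph's proof works intrinsically in the quantum setting, controlling the rank of the relevant matrices via the quantum PRV determinant (the very determinant used later in this paper in Corollary~\ref{thm PRV}) together with a careful filtration of $U_q(\fg)$; Baumann's alternative proof uses the $R$-matrix map $I$ and the Rosso form. So the correct conclusion is: your plan identifies the right objects and the right obstacle, but the freeness step is a genuine gap, not a routine transfer from the classical case, and the mechanism you propose for it is not the one the cited sources use.
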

	Put $m := \dim(V(\lambda)_0)$.

	Decompose $\bH^\lambda$ into an $m$-fold direct sum of $V(\lambda)$'s. Fix a basis $(e_i)_{i=1}^{\dim V(\lambda)}$ of $V(\lambda)$ such that $e_i \in V(\lambda)_0$ for $1 \leq i \leq m$. 
	Take the corresponding basis $(a_{ij})_{i = 1}^m$ of the $j$-th copy of $V(\lambda)$ inside $\bH$.

	Now we have an $m \times m$-matrix $(a_{ij})$ with coefficients in $F(U_q(\fg))$. The determinant of $\cP_\lambda := \cP(a_{ij})$ is called the {\it quantum PRV determinant} and computed in \cite[Theorem 8.2.10]{Jos}.
	\begin{thm} We have
	\[\det \cP_\lambda = \prod_{n \in {\mathbb Z}_+, \alpha \in \Delta_+} (K_\alpha - q_\alpha^{2(n - (\rho,\alpha^\vee))} K_{-\alpha})^{\dim V(\lambda)_{n \alpha}}\]
	up to constant multiplication in ${\mathbb C}^\times$.
	\end{thm}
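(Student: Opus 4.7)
The plan is to treat $\det\cP_\lambda$ as a Laurent polynomial in the $K_\alpha$'s, identify its zero set by locating the weights $\mu \in \fh^*$ at which the matrix degenerates, match multiplicities using the representation theory of Verma modules, and close with a degree comparison. Since $\cP$ projects onto $U_q(\fh)$, each entry of $\cP_\lambda$ lies in the commutative subalgebra generated by the $K_\alpha$'s, so $\det\cP_\lambda$ is a Laurent polynomial. Evaluating it at $\mu \in \fh^*$ via $K_\nu \mapsto q^{(\nu,\mu)}$ has a module-theoretic interpretation: for $a \in \bH^\lambda_0 \subset U_q(\fg)_0$, the triangular decomposition gives $a \cdot v_\mu = \cP(a)(\mu) \, v_\mu \pmod{M(\mu)_{<\mu}}$ in the Verma module $M(\mu)$, so $\det\cP_\lambda(\mu)$ vanishes iff some nonzero $a \in \bH^\lambda_0$ sends $v_\mu$ into the proper weight subspace $M(\mu)_{<\mu}$.

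Next, I would invoke the quantum Kac-Kazhdan theorem to locate the degenerate weights: $M(\mu)$ acquires a singular vector strictly below $v_\mu$ precisely when $(\mu+\rho,\alpha^\vee) = n$ for some $\alpha \in \Delta_+$ and $n \in \mathbb{Z}_+$, which after the substitution above is exactly the vanishing locus of the factor $K_\alpha - q_\alpha^{2(n-(\rho,\alpha^\vee))}K_{-\alpha}$. On such a hyperplane, the singular vector lives at weight $\mu - n\alpha$, and I expect the contribution of $\bH^\lambda$-elements whose weight-$(n\alpha)$ component in $V(\lambda)$ is nonzero to supply exactly $\dim V(\lambda)_{n\alpha}$ independent vanishing relations on the rows of $\cP_\lambda$: such elements, being $\ad$-invariant, must send $v_\mu$ into the $\ad$-submodule generated by the singular vector, whose weight-$\mu$ component is zero. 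This matches the exponent on the right.

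Finally, a weight/degree count closes the argument: the total $K$-degree of $\det\cP_\lambda$ can be read off from the explicit $R$-matrix formula defining $I$ together with the weight structure of $\bH^\lambda$ (by the separation theorem, $V(\lambda)$ appears in $\bH$ with multiplicity $\dim V(\lambda)_0$, and the weight $\nu$ subspace of each copy has dimension $\dim V(\lambda)_\nu$), and this matches the total degree of the right-hand product. The main obstacle is precisely the multiplicity count in the previous step: one must carefully account for how $\ad$-invariant vectors of differing weight content contribute to the vanishing at each wall, and rule out over- or undercounting at intersections of Kac-Kazhdan hyperplanes. Following \cite{Jos}, this is accomplished through a filtration of $\bH^\lambda$ by $\ad$-stable subspaces and a comparison with the quantum Shapovalov determinant on the Verma side.
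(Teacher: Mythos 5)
The paper does not actually prove this theorem: it simply records the statement and cites \cite[Theorem~8.2.10]{Jos} (the quantum PRV determinant in Joseph's book), so there is no in-paper proof to compare against. That said, your sketch does follow the broad strategy one finds in Joseph (and in the classical Parthasarathy--Ranga Rao--Varadarajan / Shapovalov setting): identify the zero locus via Verma-module degeneracies \`a la Kac--Kazhdan, count multiplicities, and close with a degree comparison. So the route is appropriate, but two of your steps are not yet correct or complete.

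First, the characterization ``$\det\cP_\lambda(\mu)=0$ iff some nonzero $a\in\bH^\lambda_0$ annihilates $v_\mu$'' is too weak. Since $\bH^\lambda_0$ has dimension $m^2$ (where $m=\dim V(\lambda)_0$) while $\mu\mapsto\cP(a)(\mu)$ is a single linear functional, some nonzero $a$ in the kernel always exists once $m\geq 2$; this cannot detect vanishing of a determinant. The correct statement is a rank condition: $\det\cP_\lambda(\mu)=0$ iff the $m\times m$ matrix $\cP_\lambda(\mu)$ has a nontrivial column (equivalently row) relation, which means there is an entire $\ad$-copy of $V(\lambda)$ inside $\bH^\lambda$ whose weight-zero part annihilates $v_\mu$. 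The paper records the module-theoretic reformulation in Corollary~\ref{thm PRV}(i): $\operatorname{rank}\cP_\lambda(\nu)=[F(U_q(\fg))/\Ann V(\nu):V(\lambda)]$. This is what links the determinant to the representation theory, and it is not what your sentence says.

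Second, both the multiplicity count $\dim V(\lambda)_{n\alpha}$ at each Kac--Kazhdan wall and the total degree comparison are asserted rather than proved. You do flag the multiplicity count as ``the main obstacle'' and defer it to a filtration of $\bH^\lambda$ and a comparison with the quantum Shapovalov determinant, which is indeed where the real work lies in Joseph's argument; but the degree count is stated as if it were routine (``read off from the explicit $R$-matrix formula''), and it is not: one needs to compute the weight of the ``leading term'' of each $a_{ij}$ under $\cP$ and sum, and compare against $\sum_{n,\alpha}\dim V(\lambda)_{n\alpha}$, which requires an identity of the form $\sum_i|\wt(e_i)|=\sum_{n\geq1,\alpha>0}n\,\dim V(\lambda)_{n\alpha}$ (or a variant adapted to the chosen normalization). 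As it stands, your proof is an outline that correctly names the ingredients but substantially relies on the very reference the theorem cites.
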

	From this, we can estimate the rank of $\cP_\lambda(\nu)$ for certain cases. (See \cite[Lemma 8.2.7]{Jos}). We regard $\cP_\lambda$ as a function on $\fh^*$ via $K_\alpha(\nu) = q^{(\mu,\alpha)}$.
	\begin{cor}\label{thm PRV}
	Let $\nu \in \fh^*$.
	\begin{enumerate}[{\rm (i)}]
	\item $\mathop{\rm rank} \cP_\lambda(\nu) = [F(U_q(\fg))/\Ann(V(\nu)) : V(\lambda)]$.
	\item The matrix $\cP_\lambda(\nu)$ is invertible if and only if  $(\nu+\rho,\alpha^\vee) \not \in {\mathbb Z}_+ + \daZ$ for any $\alpha \in \Delta_+$.
	\end{enumerate}
	\end{cor}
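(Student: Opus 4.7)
My plan is to deduce both parts from the quantum PRV determinant formula of the preceding theorem, together with the harmonic-central decomposition $F(U_q(\fg))=Z\cdot\bH$.

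For part (i), the starting point is to reduce from $F$ to $\bH$: since central elements of $F$ act as scalars on the irreducible module $V(\nu)$, the action map factors so that $F/\Ann V(\nu)$ coincides with the image of $\bH$ in $\End V(\nu)$. Consequently $(F(U_q(\fg))/\Ann V(\nu))^\lambda$ is a quotient of the semisimple $\ad$-module $\bH^\lambda\cong V(\lambda)^{\oplus m}$ with $m=\dim V(\lambda)_0$, and the multiplicity equals the number of the $m$ copies whose image is nonzero. To count these, I would pair with the highest weight vector $v_\nu$ of $V(\nu)$. For any weight-zero $a\in\bH^\lambda$, the triangular decomposition gives $a\,v_\nu=\cP(a)(\nu)\,v_\nu$, since any $E_\alpha$-factor kills $v_\nu$ and only the balanced terms in $U_q(\fh)$ survive. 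Hence the ``top matrix coefficient" functional $\phi_\nu\colon T\mapsto\langle v_\nu^*,Tv_\nu\rangle$ evaluated on the basis $(a_{ij})$ of $\bH^\lambda_0$ reproduces exactly $\cP_\lambda(\nu)$. By Frobenius reciprocity, $\Hom_{U_q(\fg)}(V(\lambda),\End V(\nu))$ is identified with a subspace of $V(\lambda)_0^*$ via $\phi_\nu$, so the rank of the $m\times m$ matrix $\cP_\lambda(\nu)$ equals the dimension of the span of the $m$ copy-maps, which is the multiplicity.

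Part (ii) is then a direct computation. Evaluating $K_\alpha$ at $\nu$ as $q^{(\alpha,\nu)}$, the linear factor becomes $q^{(\alpha,\nu)}-q^{(\alpha,\alpha)(n-(\rho,\alpha^\vee))-(\alpha,\nu)}$, which vanishes iff
\[2(\alpha,\nu)-(\alpha,\alpha)(n-(\rho,\alpha^\vee))\in 2\pi i\log(q)^{-1}\mathbb Z,\]
equivalently $(\nu+\rho,\alpha^\vee)\in n+\daZ$. Quantifying over $n\in\mathbb Z_+$ and $\alpha\in\Delta_+$ (factors with $\dim V(\lambda)_{n\alpha}=0$ being trivial) translates non-vanishing of $\det\cP_\lambda(\nu)$ into the stated criterion.

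The main obstacle is the closing step in part (i): upgrading the top-coefficient analysis into an honest equality $\mathop{\rm rank}\cP_\lambda(\nu)=[F(U_q(\fg))/\Ann V(\nu):V(\lambda)]$. One must verify that if $\phi_\nu$ annihilates a copy of $V(\lambda)$ in $\bH^\lambda$, then that copy lies entirely in $\Ann V(\nu)$ — the opposite direction being trivial. This uses $\ad$-semisimplicity of $\bH^\lambda$, cyclicity of $v_\nu$ as a generator of $V(\nu)$, and the fact that $U_q(\fg)$-homomorphisms out of $V(\lambda)$ into $\End V(\nu)$ are determined by their value on the highest weight vector, so that pairing with $v_\nu$ detects nonzero copies faithfully.
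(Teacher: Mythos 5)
The paper's ``proof'' of this corollary is only a pointer to Joseph's Lemma 8.2.7, so what you are really doing is reconstructing that argument. Your computation for part (ii) is correct (modulo the implicit quantification ``invertible for all $\lambda$,'' which is how the paper uses it later in Proposition \ref{thm rank}; for a fixed small $\lambda$ the determinant formula has no relevant factors and the stated equivalence is vacuous). Your setup for part (i) is also on the right track: the reduction from $F(U_q(\fg))$ to $\bH$ via centrality of $Z$, the identification $\bH^\lambda\cong V(\lambda)^{\oplus m}$, and the observation that a weight-zero $a\in\bH$ satisfies $a\,v_\nu=\cP(a)(\nu)\,v_\nu$ (since $V(\nu)_\nu$ is one-dimensional and $a$ preserves weight) are all correct and this is indeed the shape of Joseph's argument.

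The gap is precisely the step you yourself flag as ``the main obstacle,'' and the justification you offer does not close it. You claim the pairing $\phi_\nu$ is faithful because ``homomorphisms out of $V(\lambda)$ are determined by their value on the highest weight vector.'' That is true but irrelevant here: $\phi_\nu$ evaluates on the \emph{weight-zero} part of each copy, not on the highest weight vector $v_\lambda$, and determination-by-$v_\lambda$ does not tell you that vanishing on $V(\lambda)_0$ forces vanishing of the map. The actual argument requires two further facts which you do not supply: first, that for an $\ad$-invariant subspace $W\subset\End V(\nu)$ the image $W\,v_\nu$ is a $U_q(\fb^+)$-submodule of $V(\nu)$ (this uses $\ad(E_\alpha)W\subset W$ together with $E_\alpha v_\nu=0$, and similarly for $K_\lambda$); and second, that any nonzero $U_q(\fb^+)$-submodule of the irreducible highest weight module $V(\nu)$ contains $v_\nu$ (uniqueness of the singular vector). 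These two facts show that if the top coefficient $\langle v_\nu^*,a\,v_\nu\rangle$ vanishes for all weight-zero $a$ in a copy $W$, then $W v_\nu=0$, whereupon $\ad$-invariance and cyclicity of $v_\nu$ force $W\,V(\nu)=0$, i.e.\ $W\subset\Ann V(\nu)$. Without this chain, the asserted equality $\operatorname{rank}\cP_\lambda(\nu)=[F(U_q(\fg))/\Ann V(\nu):V(\lambda)]$ is only an inequality ($\leq$), and the proof is incomplete.
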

	As we shall see later, the twisted adjoint action
	\[{\rm ad}^S(a) x := (S \circ{\rm ad}(a) \circ S^{-1})(x) = a_{(1)} \triangleright x \triangleleft \dS^{-1}(a_{(2)})\]
	is more relevant to the Drinfeld double construction.
	We call the vector space of fixed points the {\it $q$-character algebra} and denote it by $\cO(\CqGq)$. Then this is an algebra and
	\[\chi_q(\lambda) := \sum_i (K_{-2\rho}v_i,l_i) c^\lambda_{l_i,v_i}\]
	forms a basis of $\cO(\CqGq)$, where $(v_i)$ is a basis of $V(\lambda)$ and $(l_i)$ is the dual basis. One can show $\cO(\CqGq)$ is isomorphic to the usual character algebra of $G$ as an algebra. In particular, this is commutative and its character space is $X/W$:
	any character on $\cO(\CqGq)$ is of the form $K_{\nu+2\rho}$ and these gives a same character if and only if $\nu$'s are in the same Weyl group orbit.
	\subsection{Drinfeld doubles}
	In this section, we collect some definitions and facts on Drinfeld doubles of $\cO(G_q)$.
	\begin{df}
	For Hopf algebras $A$ and $B$ with a skew pairing, the {\it Drinfeld double} $B \bowtie A$ is the algebra generated by $A$ and $B$ with the commutation relation
	\[ab = (a_{(1)} \triangleright b \triangleleft S(a_{(3)})) a_{(2)}\]
	for $a \in A$ and $b \in B$.
	As a vector space, the multiplication map gives an isomorphism $B \otimes A \to B \bowtie A$.

	If both $A$ and $B$ are Hopf $*$-algebras, $B \bowtie A$ is again a Hopf $*$-algebra.
	\end{df}
	\begin{rem}
	It is not necessary for $B$ to be a ``genuine'' Hopf algebra to define the Drinfeld double $B \bowtie A$ as an algebra, as long as the bimodule action of $A$ on $B$ makes sense.
	For example, one can define $D_c := c_c(\dG_q) \bowtie \cO(G_q)$ and $\tilde D := \cU(G_q) \bowtie \cO(G_q)$ in the same manner.
	\end{rem}

	Let $U_q(\fb^+)$ (resp.\ $U_q(\fb^-)$) be the subalgebra of $U_q(\fg)$ generated by $E_\alpha$ and $K_\lambda$ (resp.\ $F_\alpha$ and $K_\lambda$).

	Recall that there exists a unique skew-pairing
	\[U_q(\fb^-) \times U_q(\fb^+) \to \mathbb{C}\]
	such that
	\[(K_\lambda, K_\mu) = q^{(\lambda,\mu)},\]
	\[(F_\alpha,E_\beta) = -\frac{\delta_{\alpha,\beta}}{q_\alpha - q_\alpha^{-1}}.\]
	Consider the following skew-pairing
	\[U_q(\fb^-) \otimes U_q(\fb^+) \times U_q(\fb^+) \otimes U_q(\fb^-) \to \mathbb{C} : (a \otimes b, c \otimes d) = (a,c) (\dS(d),b).\]
	Now one can embed $\cO(G_q)$ into $U_q(\fb^-) \otimes U_q(\fb^+)$ as follows.
	\begin{prop}\cite[Lemma 9.2.13]{Jos}
	There is an algebra embedding
	$\Psi: \cO(G_q) \to U_q(\fb^-) \otimes U_q(\fb^+)$ such that
	\[(\Psi(x),a \otimes b) = (x,ba).\]
	\end{prop}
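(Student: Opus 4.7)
The plan is to construct $\Psi(x)$ as the element of $U_q(\fb^-) \otimes U_q(\fb^+)$ representing a linear functional on $U_q(\fb^+) \otimes U_q(\fb^-)$ built from $x$, and then to verify multiplicativity and injectivity directly from the defining formula.

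First, for each $x \in \cO(G_q)$ the prescription $a \otimes b \mapsto (x, ba)$ (with $a \in U_q(\fb^+)$ and $b \in U_q(\fb^-)$) defines a linear functional on $U_q(\fb^+) \otimes U_q(\fb^-)$, since multiplication lands in $U_q(\fg)$, on which $x$ is a functional. Because $x$ is a matrix coefficient of a finite-dimensional type $1$ representation, this functional is nonzero on only finitely many weight subspaces; this local finiteness is exactly what is needed for it to come from an element of $U_q(\fb^-) \otimes U_q(\fb^+)$ via the given skew-pairing, whose slotwise nondegeneracy reduces to that of $U_q(\fb^-) \times U_q(\fb^+) \to {\mathbb C}$. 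That element, uniquely determined, is $\Psi(x)$, and linearity in $x$ is clear.

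For multiplicativity I would compute, using the coproduct and the fact that $\dD$ preserves each $U_q(\fb^{\pm})$,
\[(\Psi(xy), a \otimes b) = (xy, ba) = (x \otimes y, \dD(b) \dD(a)),\]
and compare with $(\Psi(x) \Psi(y), a \otimes b)$ expanded via the standard product on $U_q(\fb^-) \otimes U_q(\fb^+)$ and the pairing formula $(a' \otimes b', c \otimes d) = (a', c)(\dS(d), b')$. The antipode twist in the skew-pairing is precisely what compensates for the order reversal between ``$ba$'' and the iterated coproducts, so after reindexing both sides collapse to the same iterated pairing of $x$ and $y$ with pieces of $\dD(a)$ and $\dD(b)$. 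Injectivity is then immediate: if $\Psi(x) = 0$ then $(x, ba) = 0$ for all $a$ and $b$, but by the triangular decomposition such products span $U_q(\fg)$, so nondegeneracy of $\cO(G_q) \times U_q(\fg) \to {\mathbb C}$ forces $x = 0$.

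The main obstacle is the multiplicativity step. Conceptually $\Psi$ is just the dualization of multiplication $U_q(\fb^-) \otimes U_q(\fb^+) \to U_q(\fg)$, but because of the order reversal in $(x, ba)$ and the antipode twist built into the skew-pairing, matching the Sweedler indices on the two sides requires careful bookkeeping; morally this is also the reason one must take the tensor product algebra on $U_q(\fb^-) \otimes U_q(\fb^+)$ rather than a Drinfeld-double-type product here.
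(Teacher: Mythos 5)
The paper does not prove this proposition; it cites \cite[Lemma 9.2.13]{Jos}, where the embedding is constructed via $\ell$-functionals and the $R$-matrix. Your blind reconstruction attempts a more elementary direct verification, which is a reasonable alternative route, and the multiplicativity and injectivity sketches are essentially sound: the Sweedler computation for multiplicativity does work out cleanly once one uses the coproduct reversal built into the skew-pairing axiom $(a,cd)=(\Delta(a),d\otimes c)$ and the identity $\dD(\dS(b))=(\dS\otimes\dS)\dD^{\mathrm{op}}(b)$, and injectivity really does follow from the triangular decomposition $U_q(\fb^-)U_q(\fb^+)=U_q(\fg)$ together with nondegeneracy of $\cO(G_q)\times U_q(\fg)\to\mathbb{C}$.

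The genuine gap is in the existence step. You argue that $a\otimes b\mapsto(x,ba)$ is supported on finitely many weight subspaces of $U_q(\fb^+)\otimes U_q(\fb^-)$ and that this ``local finiteness'' together with nondegeneracy forces it to come from an element of $U_q(\fb^-)\otimes U_q(\fb^+)$. But the $Q$-weight subspaces of $U_q(\fb^\pm)$ are \emph{infinite}-dimensional: each one has the form $U_q(\fn^+)_\gamma\,U_q(\fh)$ (or $U_q(\fh)\,U_q(\fn^-)_{-\delta}$), and the Cartan part $U_q(\fh)=\mathbb{C}[K_\lambda]$ sits in weight $0$. Nondegeneracy of the pairing gives an injection $U_q(\fb^-)\otimes U_q(\fb^+)\hookrightarrow(U_q(\fb^+)\otimes U_q(\fb^-))^*$, not a surjection onto weight-finitely-supported functionals, so finite support does not suffice on its own. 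What rescues the argument is the additional fact that for a matrix coefficient $x=c^\lambda_{l,v}$, the dependence of $(x,K_\nu w u K_\mu)$ on $K_\mu,K_\nu$ is a single character $q^{(\cdot,\mathrm{wt}(v))}\otimes q^{(\cdot,\mathrm{wt}(wuv))}$, hence lies in the image of $U_q(\fh)\otimes U_q(\fh)$ under the pairing; combined with finite-dimensionality of the nontrivial $U_q(\fn^\pm)$-weight pieces, this does produce $\Psi(x)$. You need to make that Cartan-part observation explicit; without it the well-definedness claim is unsupported.
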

	For the later use, we prepare a technical lemma.
	\begin{lem}\label{thm PPsi}
	For $U_q(\fg)$-modules $V,W$, $\lambda,\mu \in P$, $v \in V_\lambda$ and $w \in W_\mu$,
	\[(\cP \otimes \cP)(\Psi(x)) (v \otimes w) = K_{\lambda - \mu}(x) v \otimes w.\]
	\end{lem}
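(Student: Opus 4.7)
The strategy is to reduce the identity to the defining property of $\Psi$ by first computing explicitly how the projection $\cP$ acts on weight vectors via the skew pairing between $U_q(\fb^-)$ and $U_q(\fb^+)$.

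The first step is to establish two pointwise identities. For $a \in U_q(\fb^-)$ and $v \in V_\lambda$, I would expand $a$ in the PBW basis $F^{r} K_\gamma$; only the terms with $r = 0$ survive under $\cP$. On the other hand, the skew pairing is weight-graded and $K_\lambda$ has weight zero, so only the same $r = 0$ terms contribute to $(a, K_\lambda)$; on those terms both expressions evaluate to $q^{(\gamma,\lambda)}$, so
\[\cP(a) v = (a, K_\lambda) v.\]
A symmetric argument yields $\cP(b) w = (K_\mu, b) w$ for $b \in U_q(\fb^+)$ and $w \in W_\mu$, where now $K_\mu$ occupies the left slot of the pairing.

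Next, writing $\Psi(x) = \sum_i a_i \otimes b_i$ with $a_i \in U_q(\fb^-)$ and $b_i \in U_q(\fb^+)$, the two identities combine to give
\[(\cP \otimes \cP)(\Psi(x))(v \otimes w) = \sum_i (a_i, K_\lambda)(K_\mu, b_i)\cdot v \otimes w.\]
To identify this scalar I would apply the cross skew pairing $(a \otimes b, c \otimes d) = (a, c)(\dS(d), b)$ to $\Psi(x)$ and $c \otimes d := K_\lambda \otimes K_{-\mu} \in U_q(\fb^+) \otimes U_q(\fb^-)$. Since $\dS(K_{-\mu}) = K_\mu$, this produces exactly $\sum_i (a_i, K_\lambda)(K_\mu, b_i)$, while the defining property of $\Psi$ rewrites the same quantity as $(x, K_{-\mu} K_\lambda) = (x, K_{\lambda - \mu}) = K_{\lambda - \mu}(x)$.

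The only real obstacle here is bookkeeping: one has to track carefully which factor of each tensor product lives in which Borel and where the antipode enters the cross pairing; a wrong convention simply flips a sign in the exponent of $q$. Once the two pointwise identities are secured, the lemma follows with essentially no further computation beyond the definition of $\Psi$.
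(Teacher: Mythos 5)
Your proof is correct and takes essentially the same route as the paper's own two-line argument, only with the implicit steps spelled out: namely, that for $a \in U_q(\fb^-)$ and $v \in V_\lambda$ the identity $\cP(a)v = (a,K_\lambda)v$ holds because the skew pairing is weight-graded, and that pairing $\Psi(x)$ against $K_\lambda \otimes K_{-\mu}$ (the antipode absorbing the sign) reproduces $(x, K_{\lambda-\mu})$ via the defining property of $\Psi$. The bookkeeping you flag as the main obstacle is handled correctly.
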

	\begin{proof}
	By definition,
	\begin{align*}
	(\cP \otimes \cP)(\Psi(x)) (v \otimes w) & = ((\cP \otimes \cP)(\Psi(x)),K_\lambda \otimes K_\mu) (v \otimes w) \\
	& = (x, K_{\lambda-\mu}) (v \otimes w).
	\end{align*}
	\end{proof}
	The following result is  first observed by Kr\"{a}hmer \cite{Kra}.
	\begin{thm}
	The map
	\[\dD \times \Psi :U_q(\fg) \bowtie \cO(G_q) \to U_q(\fg) \otimes U_q(\fg)\]
	is injective.
	\end{thm}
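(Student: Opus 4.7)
I'd proceed in two parts: first verify that $\dD \times \Psi$ is an algebra homomorphism, then establish injectivity.

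For the algebra-homomorphism check, the only non-trivial verification is the Drinfeld-double cross relation $xa = (x_{(1)} \triangleright a \triangleleft \dS(x_{(3)}))\,x_{(2)}$ for $x \in \cO(G_q)$ and $a \in U_q(\fg)$. Translated into the target, this becomes the identity
\[\Psi(x)\,\dD(a) = \dD\!\left(x_{(1)} \triangleright a \triangleleft \dS(x_{(3)})\right)\,\Psi(x_{(2)})\]
in $U_q(\fg) \otimes U_q(\fg)$. I would verify this by pairing both sides with arbitrary $u \otimes v \in U_q(\fb^+) \otimes U_q(\fb^-)$, applying the defining property $(\Psi(y), u \otimes v) = (y, vu)$ of $\Psi$, and reducing to the Hopf axioms together with the definitions of $\triangleright$ and $\triangleleft$ from the skew pairing.

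For injectivity, I would leverage the twist map
\[\psi \colon U_q(\fg) \otimes U_q(\fg) \to U_q(\fg) \otimes U_q(\fg), \quad c \otimes d \mapsto c_{(1)} \otimes \dS(c_{(2)})\,d,\]
which is invertible (with inverse $c \otimes d \mapsto c_{(1)} \otimes c_{(2)}\,d$ by the antipode axiom) and intertwines left multiplication by $\dD(a)$ on the source with left multiplication by $a \otimes 1$ on the target; a short computation using coassociativity gives
\[\psi(\dD(a)\,\Psi(x)) = (a \otimes 1)(\psi \circ \Psi)(x).\]
Thus injectivity of $\dD \times \Psi$ reduces to injectivity of the map
\[U_q(\fg) \otimes \cO(G_q) \to U_q(\fg) \otimes U_q(\fg), \quad a \otimes x \mapsto (a \otimes 1)\,(\psi \circ \Psi)(x).\]

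The main obstacle is establishing this reduced injectivity. Setting $\eta := \psi \circ \Psi \colon \cO(G_q) \to U_q(\fb^-) \otimes U_q(\fg)$ (injective as a composition of injections), one needs $\eta(\cO(G_q))$ to be free under left multiplication by $U_q(\fg)$ on the first tensor factor. My plan is to decompose by the bi-weight grading from the left/right $U_q(\fh)$-actions on $\cO(G_q)$ to reduce to a single bi-weight component; within each such component, Lemma~\ref{thm PPsi} identifies the Cartan-Cartan part of $\Psi(x)$ with the scalar character $K_{\lambda-\mu}(x)$ on weight vectors, and a PBW-basis analysis then separates distinct matrix coefficients. An alternative approach is to test the vanishing $\sum_i (a_i \otimes 1)\,\eta(x_i) = 0$ on the family $\{V(\lambda) \otimes V(\mu)\}_{\lambda,\mu \in P_+}$ (on which $U_q(\fg) \otimes U_q(\fg)$ acts faithfully through $\cU(G_q \times G_q)$) and extract $a_i = 0$ by weight-component analysis.
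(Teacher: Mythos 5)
The paper does not prove this theorem; it is attributed to Kr\"ahmer \cite{Kra} and stated without a proof, so there is no internal argument to compare against. Judging your attempt on its own merits: the homomorphism check is routine and correct, and your twist $\psi(c \otimes d) = c_{(1)} \otimes \dS(c_{(2)})\,d$ is indeed a linear automorphism of $U_q(\fg) \otimes U_q(\fg)$ (inverse $c \otimes d \mapsto c_{(1)} \otimes c_{(2)} d$) satisfying $\psi(\dD(a)y) = (a \otimes 1)\psi(y)$, so the reduction to injectivity of $a \otimes x \mapsto (a \otimes 1)\eta(x)$ with $\eta := \psi \circ \Psi$ is valid.

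The remaining step, which you correctly identify as the main obstacle, is not established, and the sketch you give for it runs into a concrete difficulty. The twist $\psi$ destroys exactly the triangular structure that makes $\Psi$ tractable: while $\Psi(x) \in U_q(\fb^-) \otimes U_q(\fb^+)$, the second leg of $\eta(x) = x'_{(1)} \otimes \dS(x'_{(2)})x''$ only lies in $U_q(\fg)$, since $\dS(x'_{(2)}) \in U_q(\fb^-)$ already mixes into the negative part. Consequently Lemma~\ref{thm PPsi}, which computes $(\cP \otimes \cP)\Psi(x)$ and not $(\cP \otimes \cP)\eta(x)$, cannot be applied as you propose; the bi-weight/PBW analysis would have to be reworked for $\eta$, and it is not clear it then yields the claim. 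Note also that the first tensor leg alone cannot separate the $a_i$'s: $(a\otimes 1)\eta(x)$ and $(ab\otimes 1)\eta(y)$ can have identical first legs whenever $b \in U_q(\fb^-)$, so one must genuinely use both legs simultaneously. Your alternative suggestion --- testing vanishing on $\{V(\lambda)\otimes V(\mu)\}$, on which $U_q(\fg)\otimes U_q(\fg)$ acts faithfully --- merely rephrases the problem: the ``weight-component analysis'' that is supposed to extract $a_i = 0$ is the entire missing content. A workable argument almost certainly needs to keep the triangular decomposition visible, i.e.\ work with $\dD(a)\Psi(x)$ directly and a filtration on $U_q(\fg)\otimes U_q(\fg)$ compatible with $U^- U^0 U^+ \otimes U^- U^0 U^+$ for which Lemma~\ref{thm PPsi} identifies the leading term of $\Psi(x)$; the $\psi$-twist, while a clean way to trivialize $\dD(a)$, obscures precisely that structure.
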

	\subsection{The quantum group $SU_q(2)$}
	Through this section, let $\fg = \frsl_2$.
	In this case, the Cartan subalgebra $\fh$ is $1$-dimensional.
	We identify $\fh = {\mathbb C}$ with $\Pi = \{1\}$. Then $Q = {\mathbb Z}$ and $P = \frac{1}{2}{\mathbb Z}$.
	The Clebsch-Gordan formula asserts
	\[V(s) \otimes V(t) \simeq V(s+t) \oplus V(s+t-1) \oplus \dots \oplus V(|s-t|).\]

	Fix an orthonormal basis $(\xi_{\pm 1/2})$ of $V(1/2)$.
	Then we can fix generators $a,b,c,d$ of $\cO(SU_q(2))$ by
	\begin{align*}
	(a,x) & := (\pi^{1/2}(x)\xi_{1/2},\xi_{1/2}), & (b,x) &:= q(\pi^{1/2}(x)\xi_{1/2}, \xi_{-1/2}), \\
	(c,x) &:= q^{-1}(\pi^{1/2}(x)\xi_{-1/2},\xi_{1/2}), & (d,x) &:= (\pi^{1/2}(x)\xi_{-1/2},\xi_{-1/2}).
	\end{align*}
	Then
	\[ab = qba, ac = qca, bc = cb, ad - qbc = da - q^{-1}bc = 1,\]
	\[\Delta(a) = a \otimes a + b \otimes c, \Delta(b) = a \otimes b + b \otimes d, \Delta(c) = c \otimes a + d \otimes c, \Delta(d) = d \otimes d + c \otimes b.\]
	\section{Admissible representations}
	Let $D := U_q(\fg) \bowtie \cO(G_q)$. We regard $D$ and $D_c$ as subalgebras of $\tilde D$. Then $D_c$ is an ideal of $\tilde D$, hence in particular $D_c$ is a $D$-bimodule in a natural way.
	\begin{df}
	Let $V$ be a vector space and $\pi:D \to \End(V)$ a representation.
	We say $\pi$ is {\it admissible} if $V$ is of type 1 as a $U_q(\fg)$-module and $[V:V(\lambda)] < \infty$ for any $\lambda \in P_+$. We say $\pi$ is {\it spherical} if $\pi$ admits a nonzero $U_q(\fg)$-fixed vector.
	\end{df}
	The following lemma has already appeared in the proof of \cite[Theorem 8.1]{Kna}.
	\begin{lem}\label{thm subhomog}
	Let $A$ be a $*$-algebra and $N \in {\mathbb Z}_{\geq 0}$. Suppose $A$ is a subalgebra (with the $*$-structure ignored) of $\displaystyle \prod_{i \in I} \End(V_i)$, where $(V_i)_{i \in I}$ is a family of vector spaces with dimensions at most $N$. Then the dimension of any irreducible $*$-representation of $A$ is at most $N$.
	\end{lem}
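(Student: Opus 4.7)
The plan is to apply the Amitsur--Levitzki polynomial identity theorem. Choosing a basis of each $V_i$ and padding with zeros yields an algebra embedding $\End(V_i)\hookrightarrow M_N(\mathbb{C})$, so by Amitsur--Levitzki the algebra $M_N(\mathbb{C})$, hence $\prod_{i\in I}\End(V_i)$ and therefore the subalgebra $A$, satisfies the standard polynomial identity
\[S_{2N}(x_1,\dots,x_{2N}):=\sum_{\sigma\in S_{2N}}\operatorname{sgn}(\sigma)\,x_{\sigma(1)}\cdots x_{\sigma(2N)}=0.\]

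Now suppose $\pi:A\to B(H)$ is an irreducible $*$-representation on a Hilbert space $H$. Because $\pi$ preserves $*$, $\pi(A)$ is a $*$-subalgebra of $B(H)$, and irreducibility gives $\pi(A)'=\mathbb{C}$, so by the double commutant theorem $\pi(A)''=B(H)$. The next step is to upgrade the identity $S_{2N}=0$ from $\pi(A)$ to $B(H)$: by Kaplansky's density theorem the unit ball of $\pi(A)$ is strongly dense in the unit ball of $B(H)$, and since multiplication is jointly strongly continuous on bounded sets, $S_{2N}$ persists on the unit ball; finally multilinearity extends the vanishing to all of $B(H)$.

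To finish, I would suppose for contradiction that $\dim H\geq N+1$, pick an $(N+1)$-dimensional subspace $V\subset H$ with orthogonal projection $p$, and observe that the corner $pB(H)p$ is a subalgebra of $B(H)$ isomorphic to $M_{N+1}(\mathbb{C})$. The identity $S_{2N}=0$ would then pass to $M_{N+1}(\mathbb{C})$, contradicting the sharpness of Amitsur--Levitzki, which asserts that $M_{N+1}(\mathbb{C})$ satisfies no polynomial identity of degree below $2(N+1)$. Hence $\dim H\leq N$.

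The delicate point is the density-and-continuity step: one must know that a multilinear identity satisfied on a $*$-subalgebra of $B(H)$ passes to its weak closure. This is where the $*$-structure of $\pi$ is used in an essential way (so that $\pi(A)$ is a $*$-subalgebra and both the double commutant theorem and Kaplansky density apply), even though the embedding $A\subset\prod_i\End(V_i)$ is only assumed to respect the algebra structure.
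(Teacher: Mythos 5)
Your argument is correct. The paper itself gives no proof --- it cites \cite[Theorem 8.1]{Kna} --- but what you write is the standard polynomial-identity argument used in that context, combining the Amitsur--Levitzki theorem with the double commutant theorem and Kaplansky density; this is almost certainly the argument the citation refers to. One small precision worth noting: Kaplansky's density theorem is stated for C*-subalgebras, so one should first pass from the $*$-subalgebra $\pi(A)\subset B(H)$ to its norm closure $B:=\overline{\pi(A)}^{\|\cdot\|}$ --- the identity $S_{2N}=0$ survives because multilinear noncommutative polynomials are jointly norm-continuous, and the unit ball of $\pi(A)$ is norm-dense in that of $B$ by a scaling argument --- and only then invoke Kaplansky to conclude that the unit ball of $B$ is strongly dense in the unit ball of $B''=\pi(A)''=B(H)$. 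Your phrasing applies Kaplansky directly to $\pi(A)$; the gap is harmless but worth flagging. Everything else (joint strong continuity of multiplication on bounded sets, the passage to the corner $pB(H)p\cong M_{N+1}(\mathbb C)$, and the sharpness of Amitsur--Levitzki in degree $2N<2(N+1)$) is exactly right.
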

	\begin{thm}\label{thm findim}
	Let $\pi$ be an irreducible $*$-representation of $D_c$ on a Hilbert space $H$. Then the multiplicity of $V(\lambda)$ in $\pi|_{c_c(\widehat{G_q})}$ is at most $\dim V(\lambda)$.
 	In particular, $\displaystyle V := \bigoplus_{\lambda \in P_+} \pi(p^\lambda) H$ is an irreducible admissible $D$-module.
	\end{thm}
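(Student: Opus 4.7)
The theorem has two parts: a multiplicity bound for the $V(\lambda)$-isotypic components, and the resulting admissibility and irreducibility of $V:=\bigoplus_\lambda \pi(p^\lambda)H$. The natural tool for the first is Lemma~\ref{thm subhomog}, applied to the corner $*$-algebra $A:=\pi(p^\lambda D_c p^\lambda)$: since $p^\lambda$ is a self-adjoint projection in $D_c$ and $\pi$ is irreducible, $\pi(p^\lambda)H$ is an irreducible $*$-representation of $A$. Writing $\pi(p^\lambda)H\cong V(\lambda)\otimes M_\lambda$ as a $U_q(\fg)$-module, the bound $\dim M_\lambda\leq\dim V(\lambda)$ is equivalent to $\dim\pi(p^\lambda)H\leq(\dim V(\lambda))^2$, so by Lemma~\ref{thm subhomog} it suffices to embed $A$ as a (not necessarily $*$-preserving) subalgebra of $\prod_i\End(V_i)$ with $\dim V_i\leq(\dim V(\lambda))^2$.

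The candidate embedding comes from Kr\"ahmer's theorem. Extending $\dD\times\Psi$ to an injection $\tilde D=\cU(G_q)\bowtie\cO(G_q)\hookrightarrow\cU(G_q)\otimes\cU(G_q)$ and then restricting to $D_c$ before compressing by $p^\lambda$ gives
\[A \hookrightarrow \dD(p^\lambda)\bigl(\cU(G_q)\otimes\cU(G_q)\bigr)\dD(p^\lambda) = \prod_{\mu,\nu}\End\bigl((V(\mu)\otimes V(\nu))^\lambda\bigr).\]
Each component has dimension $\dim V(\lambda)\cdot[V(\mu)\otimes V(\nu):V(\lambda)]$, unbounded in $(\mu,\nu)$, so the naive application of Lemma~\ref{thm subhomog} fails. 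The main obstacle is refining this. One first observes that $\dD(\End V(\lambda))=\dD(p^\lambda c_c(\dG_q)p^\lambda)\subset A$ realizes $\End(V(\lambda))\otimes 1$ on each $V(\lambda)$-isotypic component, identified with $V(\lambda)\otimes\Hom(V(\lambda),V(\mu)\otimes V(\nu))$. A standard argument then forces $A$ into the tensor form $\End(V(\lambda))\otimes B$ for some subalgebra $B\subset\prod_{\mu,\nu}\End(\Hom(V(\lambda),V(\mu)\otimes V(\nu)))$, reducing the problem to a uniform bound on the irreducible $*$-representations of $B$ by $\dim V(\lambda)$. I expect this to come from the explicit description of $\Psi(\cO(G_q))\dD(p^\lambda)$ on $\Hom$-spaces combined with a PRV-type bound on the rank of the resulting operators.

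Granting the multiplicity bound, the remainder is formal. Finite-dimensionality of each $\pi(p^\lambda)H$ makes $V$ a type 1 $U_q(\fg)$-module with finite multiplicities. The Drinfeld double commutation
\[xp^\lambda=(x_{(1)}\triangleright p^\lambda\triangleleft\dS(x_{(3)}))\,x_{(2)}\]
shows that the $c_c(\dG_q)$-coefficient on the right has finite support for each $x\in\cO(G_q)$, so $\pi(x)$ carries each $\pi(p^\lambda)H$ into a finite direct sum of $\pi(p^\mu)H$'s; hence $V$ is $\cO(G_q)$-invariant and thus a $D$-submodule of $H$. For irreducibility, let $W\subset V$ be a nonzero $D$-submodule. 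Since $U_q(\fg)$ surjects onto $\End(V(\lambda))\otimes 1$ on each $\pi(p^\lambda)H=V(\lambda)\otimes M_\lambda$ by Burnside, $W$ splits as $\bigoplus_\lambda V(\lambda)\otimes N_\lambda$. The Hilbert closure $\overline W\subset H$ is then $c_c(\dG_q)$-invariant (each $\omega\in c_c(\dG_q)$ acts as a finite-rank operator stabilizing the isotypic decomposition, hence $W$) and $\cO(G_q)$-invariant (its elements act by bounded operators, since $\pi$ is a $*$-representation), so $\overline W=H$ by irreducibility of $\pi$. Finite-dimensionality of each $\pi(p^\lambda)H$ then forces $W^\lambda=\pi(p^\lambda)H$ for every $\lambda$, whence $W=V$.
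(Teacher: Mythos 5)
Your reduction to Lemma~\ref{thm subhomog} via the corner $\pi(p^\lambda D_c p^\lambda)$ is exactly the paper's strategy, and your candidate embedding
\[A=\pi(p^\lambda D_c p^\lambda) \hookrightarrow \prod_{\mu,\nu}\End\bigl((V(\mu)\otimes V(\nu))^\lambda\bigr)\]
coming from $\dD\times\Psi$ is also exactly right. The gap is the assertion that the blocks $(V(\mu)\otimes V(\nu))^\lambda$ have dimension ``unbounded in $(\mu,\nu)$.'' That is false. In fact the multiplicity $[V(\mu)\otimes V(\nu):V(\lambda)]$ is bounded by $\dim V(\lambda)$ uniformly in $(\mu,\nu)$: the vector $v_\mu\otimes v_{w_0\nu}$ (highest tensor lowest weight vector) is cyclic for the diagonal $U_q(\fg)$-action on $V(\mu)\otimes V(\nu)$, so the evaluation map
\[\Hom_{U_q(\fg)}\bigl(V(\mu)\otimes V(\nu),V(\lambda)\bigr)\longrightarrow V(\lambda),\qquad f\mapsto f(v_\mu\otimes v_{w_0\nu}),\]
is injective. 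Therefore $\dim(V(\mu)\otimes V(\nu))^\lambda\leq(\dim V(\lambda))^2$ for all $(\mu,\nu)$, and Lemma~\ref{thm subhomog} applies directly with $N=(\dim V(\lambda))^2$, giving the multiplicity bound $\dim M_\lambda\leq\dim V(\lambda)$. This cyclicity argument is the one piece of content in the proof, and missing it is what sent you off on the ``factor $A\cong\End(V(\lambda))\otimes B$'' detour, which you yourself leave as speculation (``I expect this to come from\ldots'') and which would anyway just relocate the same multiplicity bound.

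Your argument for the second half (admissibility from finite-dimensionality of each $\pi(p^\lambda)H$, $\cO(G_q)$-invariance via the double commutation relation, and irreducibility by passing to the Hilbert closure of a nonzero $D$-submodule and using irreducibility of $\pi$) is correct and is the standard passage from irreducible $*$-representations of $D_c$ to irreducible admissible $D$-modules that the paper takes as routine. But the first half as written does not go through without the cyclicity bound above.
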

	\begin{proof}
	For each $\mu = (\mu_1,\mu_2) \in P_+ \times P_+$, one can define a finite dimensional representation $\pi^\mu$ of $D$ by
	\[\pi^\mu = (\pi^{\mu_1} \otimes \pi^{\mu_2}) (\dD \times \Psi).\]
	Then since $\dD \times \Psi$ is injective, we get an embedding
	\[\bigoplus_{\mu \in P_+ \times P_+} \pi^\mu: D \hookrightarrow \prod_{\mu \in P_+ \times P_+} \End(V(\mu_1) \otimes V(\mu_2)).\]

	Fix $\lambda \in P_+$. By cutting the embedding above by $p^\lambda$, we get an embedding
	\[p^\lambda D_c p^\lambda \hookrightarrow \prod_{\mu \in P_+ \times P_+} \End(\pi^\mu(p^\lambda)(V(\mu_1) \otimes V(\mu_2))).\]
	Let $v_{w_0 \mu_2}$ be the lowest weight vector in $V(\mu_2)$.
	Since $v_{\mu_1} \otimes v_{w_0 \mu_2}$ is cyclic for the diagonal action of $U_q(\fg)$ on $V(\mu_1) \otimes V(\mu_2)$, the map
	\[{\rm Hom}_{U_q(\fg)}(V(\mu_1) \otimes V(\mu_2),V(\lambda)) \to V(\lambda): f \mapsto f(v_{\mu_1} \otimes v_{w_0 \mu_2})\]
	is injective.
	Hence we get
	\[[V(\mu_1) \otimes V(\mu_2) : V(\lambda)] \leq \dim V(\lambda).\]
	Therefore $\dim \pi^\mu(p^\lambda)(V(\mu_1) \otimes V(\mu_2)) \leq (\dim V(\lambda))^2$. Now we can apply Lemma \ref{thm subhomog} to get the desired conclusion.
	\end{proof}
	Now we start to classify admissible $D$-modules.
	\begin{lem}\label{thm classification of admissible} We have the following.
	\begin{enumerate}[{\rm (i)}]
	\item Let $V$ be an admissible $D$-module. Suppose $V^\lambda$ is an irreducible $p^\lambda D_c p^\lambda$-module. Then $V$ admits a unique irreducible subquotient containing a nonzero $\lambda$-isotypical component.
	\item Let $V,W$ be irreducible admissible $D$-modules. Suppose $0 \neq V^\lambda \simeq W^\lambda$ as $p^\lambda D_c p^\lambda$-modules. Then $V \simeq W$.
	\end{enumerate}
	\end{lem}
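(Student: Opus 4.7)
The plan is to exploit two facts in tandem: each isotypic component $V^\mu$ is preserved by $p^\mu D_c p^\mu$, and $D_c$ is an ideal in $\tilde D$. Together these let one promote $p^\lambda D_c p^\lambda$-stable subspaces of $V^\lambda$ to genuine $D$-submodules of $V$. Two preliminaries underlie everything. First, any $D$-submodule $U \subseteq V$ is automatically $U_q(\fg)$-stable, hence decomposes as $\bigoplus_\mu U^\mu$, so it is stable under each $p^\mu \in c_c(\dG_q)$ and thus under $D_c$. Second, for any $p^\lambda D_c p^\lambda$-stable subspace $U_0 \subseteq V^\lambda$, the span $D_c \cdot U_0$ is automatically a $D$-submodule: it is $D_c$-stable by construction, and since $p^\lambda U_0 = U_0$ with $\tilde D \cdot D_c \subseteq D_c$ one has $\tilde D \cdot (D_c \cdot U_0) \subseteq D_c \cdot U_0$, which in particular absorbs the action of $D$; moreover $(D_c \cdot U_0)^\lambda = p^\lambda D_c p^\lambda \cdot U_0 = U_0$.

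For (i), I would set $V' := D_c \cdot V^\lambda$, so $V'^\lambda = V^\lambda$, and let $V'' \subseteq V'$ be the sum of all $D$-submodules of $V'$ whose $\lambda$-isotypic vanishes; then $V''^\lambda = 0$. Any proper submodule of $V'/V''$ lifts to some $V'' \subsetneq U \subsetneq V'$; irreducibility of $V^\lambda$ over $p^\lambda D_c p^\lambda$ forces $U^\lambda \in \{0, V^\lambda\}$, but $U^\lambda = V^\lambda$ would give $V' = D_c \cdot V^\lambda \subseteq U$, a contradiction, while $U^\lambda = 0$ would force $U \subseteq V''$. Hence $V'/V''$ is irreducible with $\lambda$-isotypic $V^\lambda$. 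For uniqueness, any irreducible subquotient $U_1/U_2$ with nonzero $\lambda$-isotypic must satisfy $U_1^\lambda = V^\lambda$ and $U_2^\lambda = 0$, so $V' \subseteq U_1$ and $V' \cap U_2 \subseteq V''$; the map $V' \to U_1/U_2$ has image with nonzero $\lambda$-isotypic, hence is surjective by irreducibility, so $V'/(V' \cap U_2) \cong U_1/U_2$, and the further surjection $V'/(V' \cap U_2) \twoheadrightarrow V'/V''$ between two irreducible modules must be an isomorphism.

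For (ii), the first step is to note that irreducibility of $V$ automatically implies irreducibility of $V^\lambda$ as a $p^\lambda D_c p^\lambda$-module: a proper nonzero $p^\lambda D_c p^\lambda$-submodule $U_0 \subsetneq V^\lambda$ would give a nonzero $D$-submodule $D_c U_0$ with $(D_c U_0)^\lambda = U_0 \neq V^\lambda$, contradicting irreducibility of $V$. Given an isomorphism $\phi \colon V^\lambda \to W^\lambda$ of $p^\lambda D_c p^\lambda$-modules, I form the graph $\Gamma := \{(v, \phi(v)) : v \in V^\lambda\} \subseteq (V \oplus W)^\lambda$ and set $M := D_c \cdot \Gamma$. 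Then $M^\lambda = \Gamma$, and the coordinate projections satisfy $\pi_V(M) = V$ and $\pi_W(M) = W$ by irreducibility, since each image contains the corresponding $\lambda$-isotypic. If $M \cap V$ (the kernel of $\pi_V|_M$) were nonzero, irreducibility of $V$ would force $V \subseteq M$, whence $V^\lambda \oplus 0 \subseteq M^\lambda = \Gamma$; but $\Gamma \cap (V^\lambda \oplus 0) = 0$ because $\phi$ is injective, a contradiction. Symmetrically $M \cap W = 0$, so $\pi_V|_M$ and $\pi_W|_M$ are isomorphisms and $\pi_W \circ (\pi_V|_M)^{-1} \colon V \to W$ is the desired isomorphism.

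The main bookkeeping point, and the one I would be most careful about, is the claim that $D_c \cdot U_0$ is genuinely $D$-stable despite $D_c$ being non-unital and $D \not\subseteq D_c$; this rests entirely on $D_c$ being an ideal in $\tilde D$ together with $p^\lambda$ acting as the identity on $U_0 \subseteq V^\lambda$, which together let any element of $D$ acting on $D_c \cdot U_0$ be absorbed back into $D_c$.
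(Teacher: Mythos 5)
Your proof of (i) follows essentially the paper's route: both form the submodule generated by $V^\lambda$, observe that any proper submodule of it meets $V^\lambda$ trivially (by irreducibility of $V^\lambda$ over $p^\lambda D_c p^\lambda$), and conclude the sum of all proper submodules is again proper, giving a unique maximal one. For (ii) you take a genuinely different route. The paper reduces to (i) via the induced module $K := D_c p^\lambda \otimes_{p^\lambda D_c p^\lambda} V^\lambda$: both $V$ and $W$ are quotients of $K$, and since $K^\lambda \simeq V^\lambda$ is irreducible, (i) applied to $K$ singles out a unique irreducible subquotient with nonzero $\lambda$-isotypical part, forcing $V \simeq W$. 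You instead run the classical graph argument inside $V \oplus W$: the $D$-submodule $M := D_c \Gamma$ generated by the graph $\Gamma$ of $\phi$ satisfies $M^\lambda = \Gamma$, and both coordinate projections restrict to isomorphisms on $M$. Your version is more self-contained, avoiding the auxiliary induced module at the cost of checking the projections by hand; also worth recording is your preliminary observation that irreducibility of $V$ forces irreducibility of $V^\lambda$, which the paper uses implicitly. One small bookkeeping slip: $\ker(\pi_V|_M) = M \cap (0 \oplus W)$, not $M \cap (V \oplus 0)$; the argument you actually give (irreducibility of $V$ together with injectivity of $\phi$) shows $M \cap (V \oplus 0) = 0$, i.e.\ that $\pi_W|_M$ is injective, and the symmetric argument handles $\pi_V|_M$. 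The labels are transposed but the content is correct.
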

	\begin{proof}
	\begin{enumerate}[{\rm (i)}]
	\item Let $K := D V^\lambda$. We only need to show $K$ admits a unique maximal submodule. For any family of proper submodules $(L_i)_i$, $V^\lambda \cap L_i = \{0\}$ since any nonzero vector in $V^\lambda$ is cyclic in $K$. Then since
	\[L_i = \bigoplus_{0 \neq \lambda \in P_+} L_i^\lambda,\]
	$V^\lambda \cap \sum_i L_i = \{0\}$. Hence $\sum_i L_i$ is again a proper $D$-submodule. 
	\item Both $V$ and $W$ are quotients of $K := D_c p^\lambda \otimes_{p^\lambda D_c p^\lambda} V^\lambda \simeq D_c p^\lambda \otimes_{p^\lambda D_c p^\lambda} W^\lambda$. However since $K^\lambda = p^\lambda D_c p^\lambda \otimes_{p^\lambda D_c p^\lambda} V^\lambda \simeq V^\lambda$ is irreducible, $K$ has a unique irreducible subquotient containing a nonzero $\lambda$-isotypical component by {\rm (i)}. Hence $V \simeq W$.
	\end{enumerate}
	\end{proof}
	Let us restrict ourselves to the spherical cases. De Commer \cite{DCo} pointed out $\varphi D_c \varphi$ is actually isomorphic to the character algebra of $G$ for a general compact quantum group $G$. Let us state the result only in the case we need here.

	First, $D_c \varphi$ admits a $U_q(\fg)$-module structure by left multiplications.
	Since $c_c(\dG_q) \varphi = {\mathbb C} \varphi$, we have
	\[D_c \varphi = \cO(G_q) \varphi.\]
	Hence we get a $U_q(\fg)$-module structure on $\cO(G_q)$, but 
	\[x a \varphi = a_{(2)} (S(a_{(3)}) \triangleright x \triangleleft a_{(1)}) \varphi = a_{(2)} (x,S(a_{(3)}) a_{(1)}) \varphi = {\rm ad}^S(x)(a) \varphi\]
	shows this is nothing but the twisted adjoint action ${\rm ad}^S$.

	Hence, $a \varphi \in p^\lambda D_c \varphi$ if and only if $a$ is in the $\lambda$-isotypical component with respect to ${\rm ad}^S$. In particular, we get the following.
	\begin{lem}\label{thm CqGq}
	Recall $X = \fh^*/2\pi i \log(q)^{-1} Q^\vee$.
	The map
	\[\cO(\CqGq) \to \varphi D_c \varphi: x \mapsto x \varphi\]
	is an algebra isomorphism.
	In particular, $\varphi D_c \varphi$ is commutative and its character space is $X/W$.
	\end{lem}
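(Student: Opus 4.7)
The plan is to exploit the two observations established right above the statement: that $D_c\varphi = \cO(G_q)\varphi$, and that under the identification $a \leftrightarrow a\varphi$ the left multiplication by $U_q(\fg)$ on $D_c\varphi$ corresponds to the twisted adjoint action $\ad^S$ on $\cO(G_q)$. The key point is to reinterpret the left multiplication by $\varphi$ representation-theoretically and then read off both the bijection and the algebra structure.

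First I would argue that left multiplication by $\varphi = p^0 \in c_c(\dG_q) \subset D_c$ on $D_c\varphi$ is the projection onto the $\ad^S$-invariants of $\cO(G_q)$. The Peter--Weyl decomposition exhibits $\cO(G_q) \simeq \bigoplus_\lambda V(\lambda) \otimes V(\lambda)^*$ as a type-$1$ $U_q(\fg)$-module for the $\ad^S$-action: the isotypical decomposition is the same as for the coadjoint action, and only the choice of invariant inside each summand, given by $\chi_q(\lambda)$, changes. On any type-$1$ module, $p^0$ acts as the projection onto the trivial isotypical component. This immediately yields
\[\varphi D_c \varphi = \varphi \cdot (\cO(G_q)\varphi) = \cO(\CqGq)\varphi,\]
so $x \mapsto x\varphi$ is a linear bijection from $\cO(\CqGq)$ onto $\varphi D_c\varphi$.

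For multiplicativity, given $x, y \in \cO(\CqGq)$, the previous step applied to $y$ gives $\varphi y \varphi = y\varphi$ (the projection fixes an invariant), so by associativity in $D_c$,
\[(x\varphi)(y\varphi) = x(\varphi y \varphi) = x(y\varphi) = (xy)\varphi,\]
which is exactly the image of $xy$. Since $1 \in \cO(\CqGq)$ maps to the corner unit $\varphi$, the map is a unital algebra isomorphism, and the ``in particular'' clause is then inherited directly from the properties of $\cO(\CqGq)$ recorded at the end of Section~\ref{sec-ad}. The only mildly delicate point, I expect, is verifying that $\varphi$ really acts as the projection onto $\ad^S$-invariants; this rests on the type-$1$ assertion for $(\cO(G_q), \ad^S)$, which is essentially immediate from Peter--Weyl once one notes that $\ad^S$ differs from the coadjoint action only by replacing $S$ with $S^{-1}$, a twist leaving the isotypical decomposition unchanged.
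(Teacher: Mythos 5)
Your proof is correct and follows essentially the same route as the paper: you use the observation preceding the lemma that $D_c\varphi = \cO(G_q)\varphi$ with left $U_q(\fg)$-action given by $\ad^S$, note that $\varphi = p^0$ projects onto the trivial isotypical component so that $\varphi D_c\varphi = \cO(\CqGq)\varphi$, and then obtain multiplicativity from $\varphi y\varphi = y\varphi$ for $\ad^S$-invariant $y$. This matches the paper's argument, with only a bit more elaboration on why $\cO(G_q)$ is type $1$ under $\ad^S$.
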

	\begin{proof}
	Any element of $\varphi D_c \varphi$ can be written in the form $a \varphi$ for $a \in \cO(G_q)$. We also know that $a \in \varphi D_c \varphi$ if and only if $a$ is ${\rm ad}^S$-invariant, that is, $a \in \cO(\CqGq)$.
	
	We have
	\[a\varphi b \varphi = ab \varphi\]
	for $a \in \cO(G_q)$ and $b \in \cO(\CqGq)$, since $\varphi b \varphi = b \varphi$. In particular, this is an algebra isomorphism.
	\end{proof}
	\begin{cor}\label{thm X/W}
	Irreducible admissible spherical representations are parametrized by $X/W$.
	\end{cor}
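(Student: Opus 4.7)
The plan is to identify isomorphism classes of irreducible admissible spherical $D$-modules with characters of the commutative algebra $\varphi D_c\varphi$, which by Lemma \ref{thm CqGq} are parametrized by $X/W$.

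The forward map sends an irreducible admissible spherical $V$ to the character of $\varphi D_c\varphi$ acting on $V^0 := \varphi V$. Admissibility makes $V^0$ finite-dimensional, sphericality makes it nonzero, and since $\varphi$ is the central projection onto the trivial isotypic component we have $U_q(\fg)\varphi = \mathbb{C}\varphi$, whence $\varphi D\varphi = \varphi D_c\varphi$. For any nonzero $w \in V^0$, irreducibility gives $Dw = V$, and therefore
\[
V^0 = \varphi V = \varphi D w = \varphi D\varphi\cdot w = (\varphi D_c\varphi)\cdot w.
\]
Thus $V^0$ is a nonzero, finite-dimensional, cyclic module over the commutative algebra $\varphi D_c\varphi$. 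A common eigenvector in $V^0$ generates a one-dimensional submodule, which by cyclicity must be all of $V^0$; hence $V^0 \cong \mathbb{C}_\chi$ for a unique character $\chi$, yielding a well-defined point of $X/W$. Injectivity of this assignment on isomorphism classes is immediate from Lemma \ref{thm classification of admissible}(ii).

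For surjectivity, given a character $\chi$ of $\varphi D_c\varphi$, form the left $D$-module
\[
M := D\varphi \otimes_{\varphi D_c\varphi} \mathbb{C}_\chi,
\]
with $D$ acting on $D\varphi = \cO(G_q)\varphi \subset \tilde D$ by left multiplication. Then $\varphi M = (\varphi D\varphi)\otimes_{\varphi D_c\varphi} \mathbb{C}_\chi = \mathbb{C}_\chi$, irreducible over $\varphi D_c\varphi$. Granting admissibility of $M$, Lemma \ref{thm classification of admissible}(i) produces a unique irreducible subquotient $V\subseteq M$ with $V^0 \neq 0$; subquotients of admissibles are admissible, so $V$ is admissible, and $V^0 \subseteq M^0 = \mathbb{C}_\chi$ is forced to equal $\mathbb{C}_\chi$. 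This $V$ is the desired irreducible admissible spherical module realizing $\chi$.

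The main technical obstacle is the admissibility of $M$, i.e.\ the finiteness of $[M:V(\lambda)]$ for every $\lambda \in P_+$. I would establish this through Baumann's embedding (Lemma \ref{thm Bau}) together with Joseph's separation theorem $F(U_q(\fg)) = Z\otimes \bH$: the $V(\lambda)$-isotypic component of $F(U_q(\fg))$ becomes a free module of finite rank $\dim V(\lambda)\cdot\dim V(\lambda)_0$ over the center $Z$, and under these identifications $Z$ matches $\cO(\CqGq)\cong \varphi D_c\varphi$. Consequently, each isotypic component of $\cO(G_q)$ is finitely generated over $\varphi D_c\varphi$, so tensoring by the character $\chi$ collapses it to a finite-dimensional space, giving admissibility of $M$.
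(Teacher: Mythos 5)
Your proof is correct, and for the injectivity/well-definedness half it is the same as the paper's: pass to $V^0 = \varphi V$, observe it is a nonzero finite-dimensional irreducible module over the commutative algebra $\varphi D_c\varphi \cong \cO(\CqGq)$ (hence one-dimensional), use Lemma \ref{thm CqGq} to read this off as a point of $X/W$, and use Lemma \ref{thm classification of admissible}(ii) for injectivity on isomorphism classes.

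Where you diverge is on surjectivity, and the divergence is genuine. The paper's proof of Corollary \ref{thm X/W} is silent on surjectivity; the paper supplies it only in Section 4 by constructing the parabolically induced module $L(0,\nu) = D_c \otimes_B \mathbb{C}_{(0,\nu-2\rho)}$ and showing in Proposition \ref{thm HC} that its unique spherical subquotient $V(0,\nu)$ realizes every class in $X/W$. That construction has the advantage that admissibility is free: $L(0,\nu)\cong L$ as a $U_q(\fg)$-module and $[L(0,\nu):V(\lambda)]=\dim V(\lambda)_0$ by Peter--Weyl alone, no use of Joseph--Letzter. You instead build the ``universal'' spherical module $M = D\varphi\otimes_{\varphi D_c\varphi}\mathbb{C}_\chi$ and pay for it by invoking Baumann's embedding $I$ and Joseph's separation of variables to show $\cO(G_q)\varphi$ is a free $\cO(\CqGq)$-module of finite rank in each isotype, so that $M$ is admissible (and, implicitly, that $\varphi\otimes 1\neq 0$, so $M^0\neq 0$). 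This is the same freeness statement the paper records in Section 6 as ``the multiplication $H\otimes\cO(\CqGq)\to\cO(G_q)$ is an isomorphism,'' so your ingredients are all present in the paper, just deployed earlier and for a different purpose. One small point worth making explicit in your argument: $M$ is of type 1 because it is a $U_q(\fg)$-subquotient of $\cO(G_q)\varphi$ (under $\mathrm{ad}^S$), which is type 1; admissibility requires type 1 in addition to finite multiplicities. Overall your route is self-contained at the level of Section 3, while the paper's deferred construction is lighter here but front-loads the work onto $L(0,\nu)$, which it needs anyway for the unitarity analysis.
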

	\begin{proof}
	Let $V$ be an irreducible admissible spherical representation. Then $V^0$ is an irreducible $\varphi D_c \varphi$-module, which is in a one-to-one correspondence with elements of $X/W$ by Lemma \ref{thm CqGq}.

	Thanks to Lemma \ref{thm classification of admissible}, two irreducible admissible spherical representations are isomorphic if and only if the corresponding characters on $\varphi D_c \varphi$ are equal, hence they give the same element of $X/W$.
	\end{proof}
\section{Parabolic inductions}\label{sec non-min}
	In this section, we give another construction of irreducible admissible spherical representations corresponding to $\nu \in X/W$, which can be considered as an analogue of parabolic inductions.

	Fix a subset $\Sigma \subset \Pi$ and let $(\fh^\Sigma)^*$ be the linear span of $\Sigma$. Then $\Sigma$ can be regarded as the set of simple roots of a Lie subalgebra $\fg^\Sigma \subset \fg$. Take a short root $\alpha$ in $\Sigma$ and set $q^\Sigma := q^{(\alpha,\alpha)/2}$. Let $P^\Sigma$ be the weight lattice of corresponding to $\Sigma$. Let $U_{q^\Sigma}(\fg^\Sigma)$ be the subalgebra of $U_q(\fg)$ generated by $E_\alpha,F_\alpha,K_\lambda$'s where $\alpha \in \Sigma$ and $\lambda \in P^\Sigma$. Then we have a quotient map $\pi^\Sigma: \cO(G_q) \to \cO(G^\Sigma_{q^\Sigma})$. We write elements associated to $\fg^\Sigma$ with superscript $\Sigma$, for example, $D^\Sigma$, $\rho^\Sigma$ etc.
	For each $\nu \in X$, one may decompose $\nu$ to an orthogonal sum
	\[\nu = \nu^\Sigma + \nu^{\perp \Sigma},\]
	where $\nu^\Sigma \in (\fh^\Sigma)^*$ and $\nu^{\perp \Sigma} \perp (\fh^\Sigma)^*$.
	Then $\rho^\Sigma$ is nothing but the half sum of positive roots in $\Delta^\Sigma$, which is compatible with our previous notation.
	\begin{lem}
	Set $B_\Sigma := (U_{q^\Sigma}(\fg^\Sigma) U_q(\fh)) \bowtie \cO(G_q) \subset D$. Take $\nu \in \fh^*$ such that $\nu \perp \alpha$ for any $\alpha \in \Sigma$.
	Then for each admissible $D^\Sigma := U_{q^\Sigma}(\fg^\Sigma) \bowtie \cO(G^\Sigma _{q^\Sigma})$-module $V$, one can define a $B_\Sigma$-module structure on $V$ by
	\begin{itemize}
	\item For $x \in \cO(G_q)$,
	\[xv := \pi^\Sigma(x \triangleleft K_\nu) v,\]
	\item For $a \in U_{q^\Sigma}(\fg^\Sigma)$,
	\[av := av,\]
	\item For $\lambda \in P$,
	\[K_\lambda v := q^{({\rm wt} (v), \lambda)} v,\]
	when we regard ${\rm wt}(v) \in P^\Sigma \subset P$.
	\end{itemize}
	\end{lem}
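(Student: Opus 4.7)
The plan is to verify that the three prescribed actions combine into an algebra homomorphism $B_\Sigma \to \End(V)$, by checking well-definedness of each piece and then the defining relations of $(U_{q^\Sigma}(\fg^\Sigma) U_q(\fh)) \bowtie \cO(G_q)$. Well-definedness of the $U_q(\fh)$-action is immediate from $K_\lambda K_\mu = K_{\lambda+\mu}$. Well-definedness of the $\cO(G_q)$-action follows because $K_\nu$ is a character of $\cO(G_q)$, so $x \mapsto x \triangleleft K_\nu$ is an algebra automorphism; composing with the algebra map $\pi^\Sigma$ and with the $\cO(G^\Sigma_{q^\Sigma})$-action on $V$ inherited from the $D^\Sigma$-module structure yields an algebra homomorphism $\cO(G_q) \to \End(V)$. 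The $U_q(\fh)$--$U_{q^\Sigma}(\fg^\Sigma)$ commutation $K_\lambda E_\alpha K_{-\lambda} = q^{(\alpha,\lambda)} E_\alpha$ (and the analogous $F$-version) is a direct weight count on $V$, using that $E_\alpha, F_\alpha$ shift weights by $\pm\alpha$.

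The substance of the proof is the Drinfeld double cross relation $bx = (b_{(1)} \triangleright x \triangleleft \dS(b_{(3)})) b_{(2)}$ for $b \in U_{q^\Sigma}(\fg^\Sigma) U_q(\fh)$ and $x \in \cO(G_q)$, which is exactly where the hypothesis $\nu \perp \Sigma$ enters. For $b = a \in U_{q^\Sigma}(\fg^\Sigma)$, I would apply the already-known cross relation inside $D^\Sigma$ to the element $y := \pi^\Sigma(x \triangleleft K_\nu) \in \cO(G^\Sigma_{q^\Sigma})$, reducing the claim to the identity
\[a_{(1)} \triangleright (x \triangleleft K_\nu) \triangleleft \dS(a_{(3)}) = (a_{(1)} \triangleright x \triangleleft \dS(a_{(3)})) \triangleleft K_\nu\]
inside $\cO(G_q)$. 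This follows from the general fact that left and right coregular actions on $\cO(G_q)$ commute, together with $K_\nu \dS(a_{(3)}) = \dS(a_{(3)}) K_\nu$ in $\cU(G_q)$, which is forced by $(\nu,\alpha)=0$ for $\alpha \in \Sigma$: then $K_\nu$ commutes with every $E_\alpha, F_\alpha$ and $K_\mu$ with $\mu \in P^\Sigma$, hence with all of $U_{q^\Sigma}(\fg^\Sigma)$.

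For $b = K_\lambda$ I would split $\lambda = \lambda^\Sigma + \lambda^{\perp\Sigma}$ orthogonally; the prescribed $K_\lambda$-action on $V$ coincides with the $D^\Sigma$-action of $K_{\lambda^\Sigma}$ because $\wt(v) \in P^\Sigma$. Applying the $D^\Sigma$ cross relation for $K_{\lambda^\Sigma}$ then reduces the desired identity to $\pi^\Sigma(K_{\lambda^{\perp\Sigma}} \triangleright x \triangleleft K_{-\lambda^{\perp\Sigma}}) = \pi^\Sigma(x)$, which holds because $K_{\lambda^{\perp\Sigma}}$ commutes with every element $u \in U_{q^\Sigma}(\fg^\Sigma)$ and hence $x(K_{-\lambda^{\perp\Sigma}} u K_{\lambda^{\perp\Sigma}}) = x(u)$. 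The main obstacle is just careful bookkeeping around the left/right coregular actions and the projection $\pi^\Sigma$; conceptually the input is uniform, namely that $\nu \perp \Sigma$ (and analogously $\lambda^{\perp\Sigma} \perp \Sigma$) makes all the extra $K$-factors central in $U_{q^\Sigma}(\fg^\Sigma) U_q(\fh)$, so they become invisible once one passes to $\cO(G^\Sigma_{q^\Sigma})$ via $\pi^\Sigma$.
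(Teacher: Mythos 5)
Your proposal is correct and follows essentially the same route as the paper: verify the individual actions are easy, then reduce the Drinfeld double cross relation to the $D^\Sigma$ cross relation applied to $\pi^\Sigma(x \triangleleft K_\nu)$, with the orthogonality hypothesis entering precisely to make $K_\nu$ (and $K_{\lambda^{\perp\Sigma}}$) commute with $U_{q^\Sigma}(\fg^\Sigma)$ so the extra $K$-factors slide out of the coregular actions. The only cosmetic difference is that you phrase the cross relation in the form $ax = (a_{(1)} \triangleright x \triangleleft \dS(a_{(3)}))a_{(2)}$ while the paper computes with the equivalent form $xa = a_{(2)}(\dS^{-1}(a_{(1)}) \triangleright x \triangleleft a_{(3)})$.
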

	\begin{proof}
	We check each commutation relations. It is easy to show the above formula gives a $(U_{q^\Sigma}(\fg^\Sigma) U_q(\fh))$-module structure and an $\cO(G_q)$-module structure.
	Therefore, we only need to examine the commutation relation for $x \in \cO(G_q)$ and $a \in U_{q^\Sigma}(\fg^\Sigma) U_q(\fh)$.

	First, for $a \in U_{q^\Sigma}(\fg^\Sigma)$, notice that $a$ commutes with $K_\nu$. Hence
	\[xav = \pi^\Sigma(x \triangleleft K_\nu) a v = a_{(2)} \pi^\Sigma(\dS^{-1}(a_{(1)}) \triangleright x \triangleleft K_\nu a_{(3)}) v = a_{(2)}(\dS^{-1}(a_{(1)}) \triangleright x \triangleleft a_{(3)}) v.\]

	On the other hand, to see the commutation relation with $K_\lambda$ for $\lambda \in P$, let us remark that
	\[K_\lambda v = K_{\lambda^\Sigma} v\]
	by definition.
	We also have
	$K_{-\lambda^\Sigma} \triangleright \pi^\Sigma(x) \triangleleft K_{\lambda^\Sigma} = \pi^\Sigma(K_{-\lambda} \triangleright x \triangleleft K_{\lambda})$
	since $\lambda^{\perp \Sigma}$ commutes with $a \in U_{q^\Sigma}(\fg^\Sigma)$.
	Hence the above calculation also shows
	\[x K_\lambda v = K_\lambda (K_{-\lambda} \triangleright x \triangleleft K_\lambda) v,\]
	which is the desired relation.
	\end{proof}
	We denote the $B_\Sigma$-module given in the lemma above by $V_{(0,\nu)}$.
	\begin{rem}
	The $0$ is used to show that we work with the spherical cases. One can also define $V_{(\lambda,\nu)}$ to get parabolic inductions in the nonspherical cases in a similar way, but in this paper, we do not treat representations of this type.
	\end{rem}

	Now we define a quantum analogue of parabolic inductions. For an admissible $D^\Sigma$-module $V$, define a $D$-module ${\rm Ind}_\Sigma^\Pi (V,\nu)$ by
	\[{\rm Ind}_\Sigma^\Pi (V,\nu) := D_c \otimes_{B_\Sigma} V_{(0,\nu-2\rho^{\perp \Sigma})}.\]
	
	In the case of $\Sigma = \emptyset$, $B := B_\emptyset$ is $U_q(\fh) \bowtie \cO(G_q)$. Since $D^\emptyset = {\mathbb C}$, $\mathbb C$ admits a unique $D^\emptyset$-module structure.
	Put
	\[L(0,\nu) := {\rm Ind}^\Pi_\emptyset ({\mathbb C},\nu) = D_c \otimes_B {\mathbb C}_{(0,\nu-2\rho)}.\]
	Let $\Lambda$ be the map
	\[c_c(\dG_q) \to L(0,\nu) : \omega \mapsto \omega \otimes 1.\]
	Then $\Lambda$ gives a $U_q(\fg)$-module isomorphism
	\[\{\omega \in c_c(\dG_q) \mid \omega K_\lambda = \omega, \forall \lambda\} \to L(0,\nu).\]
	In particular, all $L(0,\nu)$ are isomorphic to the same $U_q(\fg)$-module $L = {\rm Im}(\Lambda)$ as $U_q(\fg)$-module and $[L(0,\nu):V(\lambda)] = \dim V(\lambda)_0$.
	It is often convenient to think $L(0,\nu)$ as a family of representations $\pi^\nu$ on the same vector space $L$. Then by definition, for $\omega \in c_c(\dG_q)$ and $x \in \cO(G_q)$,
	\[\pi^\nu(x) \Lambda(\omega) = K_{\nu-2\rho}(x_{(2)}) \Lambda \left(x_{(3)} \triangleright \omega \triangleleft S(x_{(1)})\right).\]
	The importance of the module $L(0,\nu)$ is as follows.
	\begin{prop}\label{thm HC}
	We have the following.
	\begin{enumerate}[{\rm (i)}]
	\item The $D$-module $L(0,\nu)$ admits a unique irreducible spherical subquotient. We denote it by $V(0,\nu)$.
	\item Any irreducible admissible spherical $D$-module isomorphic to one of $V(0,\nu)$.
	\item $V(0,\nu) \simeq V(0,\nu')$ if and only if $\nu' \in W\nu$.
	\end{enumerate}
	\end{prop}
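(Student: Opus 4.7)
The plan is to reduce everything to the action of $\varphi D_c \varphi$ on the one-dimensional zero-isotypical subspace of $L(0,\nu)$, and to match the resulting character with the bijection $X/W \leftrightarrow \{\text{characters of } \cO(\CqGq)\}$ from Lemma \ref{thm CqGq}.

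First, because $\Lambda$ is a $U_q(\fg)$-module isomorphism onto $L$ and $[L:V(\lambda)] = \dim V(\lambda)_0$, the subspace $L(0,\nu)^0$ is one-dimensional, spanned by $\Lambda(\varphi)$. The algebra $\varphi D_c \varphi$ therefore acts on it by a character $\chi_\nu$, and this one-dimensional module is trivially irreducible. Lemma \ref{thm classification of admissible}(i) applied with $\lambda = 0$ then yields a unique irreducible subquotient $V(0,\nu)$ of $L(0,\nu)$ with nonzero zero-isotypical component; this is spherical and nonzero, which is (i).

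Second, I would identify $\chi_\nu$ explicitly as the character of $\cO(\CqGq)$ attached to $\nu$ under Lemma \ref{thm CqGq}. For $x \in \cO(\CqGq) \subset \cO(G_q)$, start from
\[\pi^\nu(x)\Lambda(\varphi) = K_{\nu - 2\rho}(x_{(2)})\,\Lambda(x_{(3)} \triangleright \varphi \triangleleft \dS(x_{(1)}))\]
and project onto $L(0,\nu)^0$ by left-multiplying by $\varphi$; since $\varphi \omega = \varepsilon(\omega)\varphi$ in $\cU(\dG_q)$, this collapses the $\Lambda$-term to a scalar multiple of $\Lambda(\varphi)$ and reduces the computation to evaluating $\varphi$ on certain products involving $\Delta^{(2)}(x)$. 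Using the $\ad^S$-invariance of $x$ together with the modular identity $\varphi(yx) = \varphi(\dS^2(K_{4\rho} \triangleright x)y)$, one should obtain $\chi_\nu(x) = (x, K_{\nu + 2\rho})$; the shift by $-2\rho$ built into $V_{(0,\nu - 2\rho)}$ is designed precisely so that $\chi_\nu$ lands at $K_{\nu + 2\rho}$, matching the parametrization recalled in Section \ref{sec-ad}.

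Third, (ii) and (iii) are immediate formal consequences. For (ii), if $V$ is irreducible admissible and spherical, then Corollary \ref{thm X/W} says $V^0$ is one-dimensional and corresponds to some $\nu \in X/W$; since $V(0,\nu)^0$ realises the same character $\chi_\nu$ by Step 2, Lemma \ref{thm classification of admissible}(ii) gives $V \simeq V(0,\nu)$. For (iii), $V(0,\nu) \simeq V(0,\nu')$ forces the characters $\chi_\nu$ and $\chi_{\nu'}$ to agree, which by Lemma \ref{thm CqGq} is equivalent to $\nu' \in W\nu$; the converse follows because the construction of $L(0,\nu)$ depends on $\nu$ only through the induced character on $\cO(\CqGq)$.

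The main obstacle is the character identification in Step 2: carrying the Sweedler bookkeeping through $\Delta^{(2)}$ and simplifying via the modular formula without losing track of the $\rho$-shift. Everything else is a routine application of the lemmas already proved in Section \ref{sec non-min} and the preliminaries.
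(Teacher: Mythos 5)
Your outline is correct and follows essentially the same path as the paper: apply Lemma \ref{thm classification of admissible} to produce the unique irreducible spherical subquotient, compute the character by which $\varphi D_c\varphi$ acts on $L(0,\nu)^0 = {\mathbb C}\,\Lambda(\varphi)$, and then read (ii) and (iii) off Corollary \ref{thm X/W}. One remark on your Step~2: the paper avoids the full Sweedler computation by invoking the identity $x\varphi = \varphi\,\sigma_i(x) = \varphi\,(K_{2\rho}\triangleright x\triangleleft K_{2\rho})$ for $x\in\cO(\CqGq)$ (cited from \cite{DFY}, Remark~32), which together with the weight $\nu-2\rho$ built into $L(0,\nu)$ immediately gives $x\Lambda(\varphi) = K_{\nu+2\rho}(x)\Lambda(\varphi)$; your route through $\varphi\omega = \omega(1)\varphi$, the modular formula and $\ad^S$-invariance (i.e.\ $\Gamma(x)=1\otimes x$) does reach the same coefficient, but you should actually carry it through rather than leave it as an ``obstacle.'' Also, the converse in your (iii) is slightly misstated: it is not that $L(0,\nu)$ depends on $\nu$ only through the character on $\cO(\CqGq)$ (it does not --- compare Proposition~\ref{thm rank}), but rather that $V(0,\nu)$ does, by Lemma~\ref{thm classification of admissible}(ii) or directly by Corollary~\ref{thm X/W}.
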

	\begin{proof}
	Thanks to Lemma \ref{thm classification of admissible}, $L(0,\nu)$ admits a unique irreducible spherical subquotient.

	By \cite[Remark 32]{DFY}, notice that
	\[x \varphi = \varphi \sigma_i(x) = \varphi (K_{2\rho} \triangleright x \triangleleft K_{2\rho})\]
	for $x \in \cO(\CqGq)$. Hence
	\[x \Lambda(\varphi) = K_{\nu+2\rho}(x) \Lambda(\varphi).\]
	Hence $V(0,\nu)$ corresponds to $\nu \in X/W$ in the classification of Corollary \ref{thm X/W}.
	\end{proof}
	\begin{rem}
	As we shall see in Proposition \ref{thm rank}, with a suitable choice of $\nu$ in the same Weyl group orbit, $V(0,\nu)$ is actually a submodule of $L(0,\nu)$.
	\end{rem}
	We conclude this section with the following ``induction-by-step'' type lemma.
	Notice that for each $\nu \in X$, $L(0,\nu^\Sigma)$ and $V(0,\nu^\Sigma)$ are $D^\Sigma$-modules.
	\begin{lem}
	We have an isomorphism
	\[L(0,\nu) \simeq {\rm Ind}_\Sigma^\Pi (L(0,\nu^\Sigma), \nu^{\perp \Sigma}).\]
	In particular, the module ${\rm Ind}_\Sigma^\Pi (V(0,\nu^\Sigma),\nu^{\perp \Sigma})$ is a spherical subquotient of $L(0,\nu)$.
	\end{lem}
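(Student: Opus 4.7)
The plan is an ``induction in stages'' argument, using the inclusion $B \subset B_\Sigma$ inside $\tilde D$. By associativity of the tensor product, one has the natural isomorphism of left $D_c$-modules
\[
L(0,\nu) = D_c \otimes_B {\mathbb C}_{(0,\nu-2\rho)} \;\cong\; D_c \otimes_{B_\Sigma}\bigl(B_\Sigma \otimes_B {\mathbb C}_{(0,\nu-2\rho)}\bigr).
\]
The entire proof thus reduces to exhibiting an isomorphism of $B_\Sigma$-modules
\[
B_\Sigma \otimes_B {\mathbb C}_{(0,\nu-2\rho)} \;\cong\; L(0,\nu^\Sigma)_{(0,\nu^{\perp\Sigma}-2\rho^{\perp\Sigma})}.
\]

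The key ingredient for this identification is the decomposition $\rho = \rho^\Sigma + \rho^{\perp\Sigma}$, which holds because the simple reflections $s_\beta$ for $\beta \in \Sigma$ permute $\Delta_+ \setminus \Delta_+^\Sigma$; hence $\nu - 2\rho = (\nu^\Sigma - 2\rho^\Sigma) + (\nu^{\perp\Sigma} - 2\rho^{\perp\Sigma})$. Together with the skew-pairing identity
\[
(x \triangleleft K_{\nu^{\perp\Sigma}-2\rho^{\perp\Sigma}},\,K_{\nu^\Sigma-2\rho^\Sigma}) = (x,\,K_{\nu-2\rho}) \qquad (x \in \cO(G_q)),
\]
this shows that the surjective algebra homomorphism $B_\Sigma \twoheadrightarrow D^\Sigma$ underlying the $B_\Sigma$-action on $V_{(0,\nu^{\perp\Sigma}-2\rho^{\perp\Sigma})}$ intertwines the character of $B$ defining ${\mathbb C}_{(0,\nu-2\rho)}$ with the character of $B^\Sigma$ defining ${\mathbb C}_{(0,\nu^\Sigma-2\rho^\Sigma)}$. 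One then checks that both sides of the claimed isomorphism are cyclic $B_\Sigma$-modules with matching annihilator ideals, which yields the desired identification.

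For the ``in particular'' assertion, write $V(0,\nu^\Sigma) = K_1/K_2$ as a subquotient of $L(0,\nu^\Sigma)$ provided by Proposition \ref{thm HC}. Using the flatness of $D_c$ as a right $B_\Sigma$-module (a PBW-type argument), the induction functor ${\rm Ind}_\Sigma^\Pi(-,\nu^{\perp\Sigma})$ is exact, and hence ${\rm Ind}_\Sigma^\Pi(V(0,\nu^\Sigma),\nu^{\perp\Sigma})$ is realized as a subquotient of ${\rm Ind}_\Sigma^\Pi(L(0,\nu^\Sigma),\nu^{\perp\Sigma}) \cong L(0,\nu)$. Sphericity follows by considering the element $\varphi \otimes v_0^\Sigma$ in the induction, where $v_0^\Sigma \in V(0,\nu^\Sigma)$ is a $U_{q^\Sigma}(\fg^\Sigma)$-fixed vector: using the identity $a\varphi = \varepsilon(a)\varphi$ valid for all $a \in \cU(G_q)$, this element is annihilated by $a - \varepsilon(a)$ for every $a \in U_q(\fg)$, hence is $U_q(\fg)$-fixed.

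The hard part is the intermediate identification. The right-hand side $L(0,\nu^\Sigma)_{(0,\nu^{\perp\Sigma}-2\rho^{\perp\Sigma})}$ is presented via the $D^\Sigma$-structure on $L(0,\nu^\Sigma)$, whereas the left-hand side appears as a direct quotient of $B_\Sigma$ by a character of $B$; matching these two presentations requires careful bookkeeping of how the twist by $K_{\nu^{\perp\Sigma}-2\rho^{\perp\Sigma}}$ interacts with the Drinfeld-double commutation relations in $B_\Sigma$.
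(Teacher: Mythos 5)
Your plan begins correctly with the associativity isomorphism $L(0,\nu) = D_c \otimes_B {\mathbb C}_{(0,\nu-2\rho)} \cong D_c \otimes_{B_\Sigma}\bigl(B_\Sigma \otimes_B {\mathbb C}_{(0,\nu-2\rho)}\bigr)$, but the reduction to the intermediate claim
\[
B_\Sigma \otimes_B {\mathbb C}_{(0,\nu-2\rho)} \;\cong\; L(0,\nu^\Sigma)_{(0,\nu^{\perp\Sigma}-2\rho^{\perp\Sigma})}
\]
as $B_\Sigma$-modules is where the argument breaks down: this isomorphism is false in general. The algebra $B_\Sigma = (U_{q^\Sigma}(\fg^\Sigma) U_q(\fh)) \bowtie \cO(G_q)$ is built from $U_{q^\Sigma}(\fg^\Sigma)$, not from its multiplier completion $c_c(\dG^\Sigma_q)$. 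The left-hand side $B_\Sigma \otimes_B {\mathbb C}$ is therefore a cyclic $B_\Sigma$-module generated by $1 \otimes 1$ and is a ``universal'' quotient of $B_\Sigma$; it need not be of type $1$ as a $U_{q^\Sigma}(\fg^\Sigma)$-module. The right-hand side $L(0,\nu^\Sigma) = D^\Sigma_c \otimes_{B^\Sigma} {\mathbb C}_{(0,\nu^\Sigma - 2\rho^\Sigma)}$ is by construction type $1$ with $[L(0,\nu^\Sigma):V^\Sigma(\lambda)] = \dim V^\Sigma(\lambda)_0$. Moreover your justification---``both sides are cyclic with matching annihilators''---does not hold even for the right-hand side alone: by Proposition~\ref{thm rank}(ii), the spherical vector generates $L(0,\nu^\Sigma)$ over $D^\Sigma$ only when $(\nu^\Sigma,\alpha^\vee) \notin 2{\mathbb Z}_- + 2\pi i \log(q)^{-1}{\mathbb Z}$ for all $\alpha \in \Delta^\Sigma_+$. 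The lemma being proved has no such restriction on $\nu$, so a proof relying on cyclicity of $L(0,\nu^\Sigma)$ cannot go through uniformly.

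The paper avoids this entirely. Rather than matching $B_\Sigma \otimes_B {\mathbb C}$ with anything, it writes ${\rm Ind}_\Sigma^\Pi(L(0,\nu^\Sigma),\nu^{\perp\Sigma}) = D_c \otimes_{B_\Sigma} D^\Sigma_c \otimes_{B^\Sigma} {\mathbb C}_{(0,\nu^\Sigma-2\rho^\Sigma)}$ and constructs two explicit mutually inverse maps to and from $D_c \otimes_B {\mathbb C}_{(0,\nu-2\rho)}$. The forward map is $\omega \otimes \mu \otimes 1 \mapsto \omega\mu \otimes 1$; the backward map sends $\omega \otimes 1 \mapsto \omega \otimes \mu \otimes 1$, where $\mu$ is an idempotent in $c_c(\dG^\Sigma_q)$ satisfying $\omega\mu = \omega$, and well-definedness (independence of $\mu$) is checked by a short local-unit computation. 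The whole point is that the nonunital ideals $c_c(\dG_q)$ and $c_c(\dG^\Sigma_q)$ have local units, and the isomorphism runs at the level of $D_c$ and $D^\Sigma_c$, never passing through $B_\Sigma \otimes_B {\mathbb C}$. Separately, the claimed flatness of $D_c$ as a right $B_\Sigma$-module used for the ``in particular'' statement is asserted without justification and is not trivial; the paper does not invoke exactness of the induction functor.
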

	\begin{proof}
	By definition, we have
	\[{\rm Ind}_\Sigma^\Pi (L(0,\nu^\Sigma), \nu^{\perp \Sigma}) = D_c \otimes_{B_\Sigma} D^\Sigma_c \otimes_{B^\Sigma} {\mathbb C}_{(0,\nu^\Sigma - 2\rho^\Sigma)}.\]
	(Notice that $B^\Sigma = \cO(G^\Sigma_{q^\Sigma}) \bowtie U_{q^\Sigma}(\fh^\Sigma) \neq B_\Sigma$.)
	We claim
	\[D_c \otimes_{B_\Sigma} D^\Sigma_c \otimes_{B^\Sigma} {\mathbb C}_{(0,\nu^\Sigma - 2\rho^\Sigma)} \to D_c \otimes_B {\mathbb C}_{(0,\nu-2\rho)}: \omega \otimes \mu \otimes 1 \mapsto \omega \mu \otimes 1\]
	is an isomorphism.

	To construct the inverse, for each $\omega \in c_c(\dG_q)$, we can find an idempotent $\mu \in c_c(\dG^\Sigma_q)$ such that $\omega \mu = \omega$.
	Now one can define a map
	\[D_c \otimes_B {\mathbb C}_{(0,\nu-2\rho)} \to D_c \otimes_{B_\Sigma} D^\Sigma_c \otimes_{B^\Sigma} {\mathbb C}_{(0,\nu^\Sigma - 2\rho^\Sigma)}: \omega \otimes 1 \mapsto \omega \otimes \mu \otimes 1\]
	where $\mu$ is an idempotent in $c_c(\dG_q)$ such that $\omega \mu = \omega$.
	Here we notice it does not depend on the choice of $\mu$. In fact, for $\mu_1,\mu_2 \in c_c(\dG_q)$ with $\omega \mu_i = \omega$, one can find an idempotent $\mu_0$ such that $\mu_i \mu_0 = \mu_i$ for $i=1,2$. Then
	\[\omega \otimes \mu_1 = \omega \otimes \mu_1 \mu_0 = \omega \mu_1 \otimes \mu_0 = \omega \otimes \mu_0 = \omega \otimes \mu_2.\]
	Therefore this map is well-defined.
	
	These maps are inverses to each other, which finishes the proof.
	\end{proof}
	\section{Invariant form}
	In this section, we construct an invariant sesquilinear pairing on certain pairs of parabolic inductions. Before that, we begin with the following standard observation.

	Let $V,W$ be $D$-modules. We say a pairing $V \times W \to {\mathbb C}$ is {\it invariant} if
	\[(av,w) = (v,a^*w)\]
	for $a \in D$, $v \in V$ and $w \in W$.
	We say $V$ is {\it unitarizable} if it admits an invariant inner product.
	\begin{lem}\label{thm Schur}
	We have the following.
	\begin{enumerate}[{\rm (i)}]
	\item Let $V,W$ be admissible irreducible $D$-modules. Then an invariant sesquilinear pairing between $V$ and $W$ is unique up to a scalar factor, if it exists.
	\item Let $V,W_i$ be admissible irreducible $D$-modules with invariant sesquilinear pairings between $V$ and $W_i$ for $i = 1,2$. Then $W_1 \simeq W_2$.
	\end{enumerate}
	\end{lem}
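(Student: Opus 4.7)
The plan is to reduce both parts to the classification in Lemma~\ref{thm classification of admissible} combined with Schur's lemma on finite-dimensional isotypic components.

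First, since each central projection $p^\lambda \in c_c(\dG_q)$ is self-adjoint, invariance of the pairing forces $(V^\lambda, W^\mu) = 0$ whenever $\lambda \neq \mu$, so any invariant pairing splits into a direct sum of $p^\lambda D_c p^\lambda$-invariant pairings on the blocks $V^\lambda \times W^\lambda$. Moreover, if $V$ is irreducible as a $D$-module, then every nonzero $V^\lambda$ is irreducible as a $p^\lambda D_c p^\lambda$-module: a proper $p^\lambda D_c p^\lambda$-submodule $K \subset V^\lambda$ would generate a proper $D$-submodule $D_c K \subsetneq V$ with $p^\lambda (D_c K) = K$, contradicting irreducibility. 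Finally, for a nonzero invariant pairing, the radicals $\{v \in V : (v,W)=0\}$ and $\{w \in W : (V,w)=0\}$ are $D$-submodules and hence trivial, which implies that $(-,-)|_{V^\lambda \times W^\lambda}$ is non-degenerate for every $\lambda$ with $V^\lambda \neq 0$.

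For part (i), given two nonzero invariant pairings $\phi_1, \phi_2$, Schur applied to the finite-dimensional irreducible $p^\lambda D_c p^\lambda$-modules yields constants $c_\lambda \in \mathbb{C}^\times$ with $\phi_1 = c_\lambda \phi_2$ on each $V^\lambda \times W^\lambda$. The essential step is to show $c_\lambda$ is independent of $\lambda$. Given another weight $\mu$ with $V^\mu \neq 0$, I use irreducibility of $V$ to write $v' = av$ with $v \in V^\lambda$, $v' \in V^\mu$, $a \in D$, and pick $w' \in W^\mu$ with $\phi_2(v', w') \neq 0$; then for $i=1,2$,
\[\phi_i(v',w') = \phi_i(av,w') = \phi_i(v,(a^*w')^\lambda),\]
where only the $\lambda$-component of $a^*w'$ survives by orthogonality of isotypic components. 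Comparing the two sides via $\phi_1 = c_\lambda \phi_2$ on $V^\lambda \times W^\lambda$ and $\phi_1 = c_\mu \phi_2$ on $V^\mu \times W^\mu$ forces $c_\lambda = c_\mu$.

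Part (ii) is then immediate: for each $\lambda$ with $V^\lambda \neq 0$, the non-degenerate $p^\lambda D_c p^\lambda$-invariant pairing identifies both $W_1^\lambda$ and $W_2^\lambda$ with the Hermitian dual of $V^\lambda$, so $W_1^\lambda \simeq W_2^\lambda$ as $p^\lambda D_c p^\lambda$-modules, and Lemma~\ref{thm classification of admissible}(ii) yields $W_1 \simeq W_2$. The main obstacle I anticipate is the bookkeeping around the absence of a $*$-structure on the modules themselves: the involution lives only on $D$, so the sesquilinear pairing must play the role of an inner product, and one has to use cyclicity of an arbitrary nonzero vector under the $D$-action to glue the local scalars $c_\lambda$ into a single global constant.
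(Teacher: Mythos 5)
Your proof is correct, and it takes a genuinely different route from the paper's. The paper constructs, from the admissible module $V$, its ``admissible Hermitian dual'' $V\tilde{} := \bigoplus_\lambda (V^\lambda)^* \subset \overline{V}^*$ (the $\lambda$-isotypic pieces of the conjugate-linear functionals, with $D$-action $(ax,y) := (x,a^*y)$), shows $V\tilde{}$ is again irreducible when $V$ is, and then observes that invariant sesquilinear pairings $V \times W \to \mathbb C$ are the same thing as $D$-module maps $W \to V\tilde{}$; both (i) and (ii) then drop out in one step from Schur's lemma. Your argument instead works block-by-block on the finite-dimensional $p^\lambda D_c p^\lambda$-modules $V^\lambda$, applies Schur there to get local scalars $c_\lambda$, and then glues them to a single constant by running a cyclic vector across blocks via the $D$-action. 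The paper's construction is cleaner and packages all the nondegeneracy and orthogonality facts into the single object $V\tilde{}$, whereas your approach is more hands-on and makes explicit the role of Lemma \ref{thm classification of admissible}: it shows (ii) really is just the uniqueness statement of that lemma applied to the Hermitian dual of a single block, and it clarifies that the gluing in (i) is the only genuinely global step. Both are complete; yours is a bit longer but arguably more self-contained, as it never needs the claim that $V\tilde{}$ is itself irreducible.
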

	\begin{proof}
	For an admissible $D$-module $V$, consider the vector space ${\overline{V}}^*$ of all conjugate linear functionals on $V$. Then ${\overline{V}}^*$ carries a natural $D$-module structure by
	\[(ax,y) := (x,a^*y)\]
	for $x \in {\overline{V}}^*$, $y \in V$.

	Consider the submodule $V\tilde{} = \oplus_\lambda (V^\lambda)^* \subset  {\overline{V}}^*$.
	Then $V \tilde{}$ is admissible, and if $V$ is irreducible, then $V \tilde{}$ is also irreducible.
	
	Now sesquilinear pairings between $V$ and $W$ are in one-to-one correspondence to homomorphisms from $W$ to $V\tilde{}$. Now the results are direct consequences of Schur's lemma.
	\end{proof}

	Define a functional $\dphi \in c_c(\dG_q)$ by
	\[\dphi(x) = \sum_{\lambda \in P_+} {\rm Tr}_\lambda(K_{2\rho}) {\rm Tr}_\lambda(K_{-2 \rho} x),\]
	where ${\rm Tr}_\lambda$ is the non-normalized trace on $V(\lambda)$.
	In \cite{VD}, it is shown that this is the left invariant weight on $c_c(\dG_q)$, that is, a positive functional on $c_c(\dG_q)$ such that
	\[\dphi(x \triangleleft a) = \dphi(x) \varepsilon(a).\]
	We also have
	\[\dphi(a \triangleright x) = \dphi(x) K_{4\rho}(a).\]
	Therefore we get
	\[\dphi((a \triangleright x)y) = \dphi(a_{(2)} \triangleright (x (S^{-1}(a_{(1)}) \triangleright y))) = K_{4\rho}(a_{(2)}) \dphi(x(S^{-1}(a_{(1)})\triangleright y)),\]
	\[\dphi((x \triangleleft a)y) = \dphi(x(y \triangleleft S(a_{(1)})) \triangleleft a_{(2)}) = \dphi(x(y \triangleleft S(a))).\]


	Put $P_{\Sigma,+} := \{\lambda \in P \mid (\lambda,\alpha) \in {\mathbb Z}_{\geq 0} \text{ for } \alpha \in \Sigma \}$.
	In a similar way, one may consider irreducible representations $V(\lambda)$ of weight $\lambda \in P_{\Sigma,+}$ of the ``reductive'' quantum group $U_{q^\Sigma}(\fg^\Sigma) U_q(\fh)$ to construct a compact quantum group $\cO(G_{\Sigma,q})$.
	Again from \cite{VD}, the functional $\dphi_\Sigma$ on $\displaystyle c_c(\dG_{\Sigma,q}) := \bigoplus_{\lambda \in P_{\Sigma,+}} \End(V(\lambda))$.
	\[\dphi_\Sigma(x) = \sum_{\lambda \in P_{\Sigma.+}} {\rm Tr}(K_{2\rho^\Sigma}) {\rm Tr}(K_{-2\rho^\Sigma}x)\]
	satisfies the same relations as $\dphi$.

	Now for each $x \in c_c(\dG_q)$, one can find a unique $E(x) \in \cU(G_{\Sigma,q})$ such that 
	\[\dphi(xy) = \dphi_\Sigma(E(x)y).\]
	In this way, we can define a linear map $E:c_c(\dG_q) \to \cU(G_{\Sigma,q})$.

	The following lemma can be considered as a simple consequence of the modular theory, but for convenience, we attach a purely algebraic proof.
	\begin{lem}
	\begin{enumerate}[{\rm (i)}]
	\item The map $E$ is positive.
	\item The map $E$ is a $\cU(G_{\Sigma,q})$-bimodule homomorphism.
	\item We have
	\[E(a \triangleright x) = (a \triangleleft K_{4 \rho^{\perp \Sigma}}) \triangleright E(x),\]
	\[E(x \triangleleft a) = E(x) \triangleleft a.\]
	\end{enumerate}
	\end{lem}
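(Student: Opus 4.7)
The plan is to make $E$ completely explicit as a weighted partial trace; once this is done, all three claims follow with elementary bookkeeping.

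First, I would fix $\lambda\in P_+$ and decompose $V(\lambda)$ as a $U_{q^\Sigma}(\fg^\Sigma)U_q(\fh)$-module,
\[V(\lambda) = \bigoplus_{\mu\in P_{\Sigma,+},\,\nu} V_\Sigma(\mu)\otimes M_{\lambda,\mu,\nu},\]
where $\nu$ ranges over the $(\fh^\Sigma)^\perp$-weights of $V(\lambda)$ and $M_{\lambda,\mu,\nu}$ is the corresponding multiplicity space, on which $K_{\rho^{\perp\Sigma}}$ acts by the scalar $q^{(\rho^{\perp\Sigma},\nu)}$. An element of $c_c(\dG_{\Sigma,q})$ acts as $y_\mu\otimes 1$ on each $V_\Sigma(\mu)\otimes M_{\lambda,\mu,\nu}$, and $K_{-2\rho}$ splits as $K_{-2\rho^\Sigma}\otimes q^{-2(\rho^{\perp\Sigma},\nu)}\mathrm{id}$. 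Evaluating the defining identity $\dphi(xy)=\dphi_\Sigma(E(x)y)$ against arbitrary $y\in\End(V_\Sigma(\mu))$, and using non-degeneracy of the twisted trace on $\End(V_\Sigma(\mu))$, one obtains
\[E(x)_\mu = \frac{\mathrm{Tr}_\lambda(K_{2\rho})}{\mathrm{Tr}_{V_\Sigma(\mu)}(K_{2\rho^\Sigma})}\sum_\nu q^{-2(\rho^{\perp\Sigma},\nu)}(\mathrm{id}\otimes\mathrm{Tr}_{M_{\lambda,\mu,\nu}})(x^{\mu,\nu}),\]
where $x^{\mu,\nu}\in\End(V_\Sigma(\mu)\otimes M_{\lambda,\mu,\nu})$ is the diagonal block of $x$.

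From this formula, (i) is immediate, since each partial trace is completely positive and the coefficients are positive. For (ii), elements of $\cU(G_{\Sigma,q})$ act only on the $V_\Sigma(\mu)$-factor, so they commute with every partial trace $\mathrm{id}\otimes\mathrm{Tr}_{M_{\lambda,\mu,\nu}}$, giving $E(ax)=aE(x)$ and $E(xa)=E(x)a$ block-by-block. Alternatively, (ii) admits an abstract proof from the uniqueness of $E$ together with the observation that $K_{\rho^{\perp\Sigma}}$ commutes with $\cU(G_{\Sigma,q})$, so the restriction of the $\dphi$-modular automorphism to $\cU(G_{\Sigma,q})$ coincides with that of $\dphi_\Sigma$.

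For (iii), I would apply the identities for $\dphi$ stated just before the lemma, together with their $\dphi_\Sigma$-analogues. The $\triangleleft$-identity follows by direct substitution: $\dphi_\Sigma(E(x\triangleleft a)y) = \dphi((x\triangleleft a)y) = \dphi(x(y\triangleleft S(a))) = \dphi_\Sigma(E(x)(y\triangleleft S(a))) = \dphi_\Sigma((E(x)\triangleleft a)y)$. For the $\triangleright$-identity, begin with $\dphi_\Sigma(E(a\triangleright x)y) = K_{4\rho}(a_{(2)})\dphi(x(S^{-1}(a_{(1)})\triangleright y))$, split $K_{4\rho}(a_{(2)}) = K_{4\rho^\Sigma}(a_{(2)})K_{4\rho^{\perp\Sigma}}(a_{(3)})$ via coassociativity, and match against the $\dphi_\Sigma$-formula applied to $((a\triangleleft K_{4\rho^{\perp\Sigma}})\triangleright E(x))y$, using the coproduct identity $\Delta(a\triangleleft K_{4\rho^{\perp\Sigma}}) = K_{4\rho^{\perp\Sigma}}(a_{(3)})\,a_{(1)}\otimes a_{(2)}$. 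The main technical obstacle is that the defining equation of $E$ is stated only for multiplication by elements of $c_c(\dG_{\Sigma,q})$, while in (iii) one multiplies by $y\triangleleft S(a)$ or $S^{-1}(a_{(1)})\triangleright y$ which a priori lie outside $\cU(G_{\Sigma,q})$; I expect this to be settled by a direct block-wise computation using the explicit formula for $E$ above.
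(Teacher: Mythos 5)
Your proposal is correct in substance, but the route is genuinely different from the paper's, which proves all three parts purely abstractly from the modular relation $\dphi(xy)=\dphi(yK_{-2\rho}xK_{2\rho})$ and the corresponding relation for $\dphi_\Sigma$, together with the fact that $K_{2\rho^{\perp\Sigma}}$ is central in $\cU(G_{\Sigma,q})$. Concretely: for (i) the paper computes $\dphi_\Sigma(y^*E(x^*x)y)=\dphi_\Sigma(E(x^*x)yK_{-2\rho^\Sigma}y^*K_{2\rho^\Sigma})=\dphi(x^*xyK_{-2\rho^\Sigma}y^*K_{2\rho^\Sigma})=\dphi(y^*x^*xy)$; for (ii) it moves $a\in\cU(G_{\Sigma,q})$ across $\dphi$ by the modular rule, observing that $K_{-2\rho}aK_{2\rho}=K_{-2\rho^\Sigma}aK_{2\rho^\Sigma}$; for (iii) it is exactly the identity-matching you write. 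Your approach instead makes $E$ explicit as a weighted partial trace; this buys a transparent proof of (i), since positivity becomes the positivity of partial traces, and it gives you an independent check of (ii), whereas for (iii) your argument essentially coincides with the paper's. The technical obstacle you flag at the end is real and is in fact glossed over in the paper's proof of (iii): the defining equation $\dphi(xy)=\dphi_\Sigma(E(x)y)$ is only given for $y\in c_c(\dG_{\Sigma,q})$, yet in the abstract argument it is applied to $S^{-1}(a_{(1)})\triangleright y$ and $y\triangleleft S(a)$, which a priori leave $c_c(\dG_{\Sigma,q})$. Your explicit block formula gives a clean way to justify this extension; alternatively one can note that $\dphi(x\,\cdot)$ and $\dphi_\Sigma(E(x)\,\cdot)$ agree on $c_c(\dG_{\Sigma,q})$ and that the weight-$0$ (with respect to the $\fh^{\perp\Sigma}$-grading) component of $S^{-1}(a_{(1)})\triangleright y$ is all that contributes. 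One small imprecision: the indexing $\bigoplus_{\mu\in P_{\Sigma,+},\,\nu}V_\Sigma(\mu)\otimes M_{\lambda,\mu,\nu}$ double-counts, since $\mu\in P_{\Sigma,+}\subset P$ already encodes the $(\fh^\Sigma)^\perp$-weight (it acts by a central character on $V(\mu)$); the extra $\nu$ should be dropped, or $\mu$ should run only over $\Sigma$-dominant $\Sigma$-weights. This does not affect the argument.
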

	\begin{proof}
	First we show $E$ is positive. In fact, 
	\begin{align*}
	\dphi_\Sigma(y^* E(x^*x ) y) & = \dphi_\Sigma(E(x^*x)y K_{-2\rho^\Sigma}y^* K_{2\rho^\Sigma}) \\
	& = \dphi(x^* x y K_{-2\rho^\Sigma}y^* K_{2\rho^\Sigma}) \\
	& = \dphi(y^* x^*x y).
	\end{align*}
	Here for the last equation, we used the fact that $K_{2\rho^{\perp \Sigma}}$ commutes with $y$.

	For the second assertion, for $a \in \cU(G_{\Sigma,q})$, we have
	\[\dphi_\Sigma(E(ax)y) = \dphi(axy) = \dphi(xyK_{-2\rho}aK_{2\rho}) = \dphi_\Sigma(E(x)yK_{-2\rho^\Sigma}aK_{2\rho^\Sigma}) = \dphi_\Sigma(aE(x)y).\]
	Hence $E$ is a left $\cU(G_{\Sigma,q})$-module homomorphism. One can show $E$ is a right $\cU(G_{\Sigma,q})$-module homomorphism in a similar manner.

	For the third assertion,
	\begin{align*}
	\dphi_\Sigma(E(a \triangleright x)y) &= \dphi((a \triangleright x)y) = \dphi(x((S^{-1}(a \triangleleft K_{4\rho})) \triangleright y)) \\
	& = \dphi_\Sigma(E(x) ((S^{-1}(a \triangleleft K_{4\rho})) \triangleright y)) = \dphi_\Sigma(((a \triangleleft K_{4\rho^{\perp\Sigma}}) \triangleright E(x)) y).
	\end{align*}
	Again, $E(x \triangleleft a) = E(x) \triangleleft a$ is similar.
	\end{proof}
	Let $V,W$ be admissible $D$-modules with an invariant sesquilinear pairing. Thanks to {\rm (ii)} of the lemma above, one can define a sesquilinear pairing on ${\rm Ind}_\Sigma^\Pi (V,\nu)$ and ${\rm Ind}_\Sigma^\Pi (W,-\overline{\nu})$ by
	\[(x \otimes v, y \otimes w) := (E(y^*x)v,w),\]
	where ${\rm Ind}_\Sigma^\Pi (V,\nu)$ is identified with $c_c(\dG_q) \otimes_{U_{q^\Sigma}(\fg^\Sigma) U_q(\fh)} V$.
	If we start with $V = W$, then the resulting sesquilinear form is an inner product if and only if the original form on $V$ is positive definite.
	\begin{prop}
	This sesquilinear pairing is invariant, i.e.\ 
	\[(ax,y) = (x,a^*y)\]
	for any $a \in D$, $x \in {\rm Ind}_\Sigma^\Pi (V,\nu)$ and $y \in {\rm Ind}_\Sigma^\Pi (W,-\overline{\nu})$.
	\end{prop}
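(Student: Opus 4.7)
The plan is to verify the invariance $(ay, z) = (y, a^*z)$ on a generating set of $D$, separately for $a \in \cU(G_q)$ (which contains $U_q(\fg)$) and $a = x \in \cO(G_q)$; since invariance is closed under products via $(ab)^* = b^*a^*$, this suffices. Recall that the $D$-action on ${\rm Ind}_\Sigma^\Pi(V, \nu) = D_c \otimes_{B_\Sigma} V_{(0, \nu - 2\rho^{\perp\Sigma})}$ is by left multiplication on the first tensor factor, using $D \subset \tilde D$.

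For $a \in \cU(G_q)$, we have $a(\omega \otimes v) = (a\omega) \otimes v$, so
\[(a(\omega \otimes v), \eta \otimes w) = (E(\eta^* a\omega) v, w) = (E((a^*\eta)^* \omega) v, w) = (\omega \otimes v, a^*(\eta \otimes w)),\]
using only the identity $(a^*\eta)^* = \eta^* a$ in $c_c(\dG_q)$. This case is immediate.

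For $x \in \cO(G_q)$, the Drinfeld double relation $x\omega = (x_{(1)} \triangleright \omega \triangleleft S(x_{(3)})) x_{(2)}$, combined with the fact that $x_{(2)} \in \cO(G_q) \subset B_\Sigma$ can be moved across the tensor to act on $v$ via the $B_\Sigma$-module structure, yields
\[x(\omega \otimes v) = (x_{(1)} \triangleright \omega \triangleleft S(x_{(3)})) \otimes \pi^\Sigma(x_{(2)} \triangleleft K_{\nu - 2\rho^{\perp\Sigma}}) v,\]
and analogously for $x^*(\eta \otimes w)$ with $-\overline{\nu}$ in place of $\nu$ and each $x_{(i)}$ replaced by $x_{(i)}^*$ (permitted since $\Delta$ is a $*$-homomorphism). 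Substituting into the two pairings, I would (a) absorb the $\pi^\Sigma$-factor inside $E$ using the $\cU(G_{\Sigma,q})$-bimodule property of $E$, (b) pull the $x_{(1)}\triangleright$ and $\triangleleft S(x_{(3)})$ past $E$ via the covariance relations
\[E(b \triangleright y) = (b \triangleleft K_{4\rho^{\perp\Sigma}}) \triangleright E(y), \qquad E(y \triangleleft b) = E(y) \triangleleft b,\]
and (c) transfer the remaining $\cO(G^\Sigma_{q^\Sigma})$-factor from $V$ to $W$ using the given $D^\Sigma$-invariance of the pairing on $V \times W$.

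The key cancellation is that the $K_{4\rho^{\perp\Sigma}}$ twist appearing in the covariance of $E$ exactly absorbs the combined $-2\rho^{\perp\Sigma}$ shifts from the two inductions (one for $\nu$, one for $-\overline{\nu}$), while the $\nu$ and $-\overline{\nu}$ halves pair up through the $*$-compatibility $(a^*, b) = \overline{(a, S(b)^*)}$ of the skew-pairing. The main obstacle is the careful bookkeeping of the threefold Sweedler decomposition $\Delta^{(3)}(x)$ together with the $*$-structure: one must precisely track how taking adjoints interacts with $\triangleright$ and $\triangleleft$ (which introduces twists by $S$ through the pairing compatibility) and verify that, once all the modular corrections combine, the remaining Sweedler factors reassemble by coassociativity into the expected expression for $x^*(\eta \otimes w)$.
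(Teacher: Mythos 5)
Your proposal is correct and follows essentially the same route as the paper: handle $a \in U_q(\fg)$ (trivially, since the action is by left multiplication on the $c_c(\dG_q)$ leg and $\eta^* a = (a^*\eta)^*$ feeds directly into $E$), and for $a \in \cO(G_q)$ use the Drinfeld-double commutation formula, the $\cU(G_{\Sigma,q})$-bimodule property of $E$, the covariance $E(b \triangleright y) = (b \triangleleft K_{4\rho^{\perp\Sigma}}) \triangleright E(y)$, and the $*$-compatibility of the skew pairing to move the modular and Sweedler factors from the $W$-side to the $V$-side. The paper's proof is exactly the fivefold-Sweedler chain of equalities you describe, and you have correctly identified the decisive cancellation: the $-\overline\nu - 2\rho^{\perp\Sigma}$ shift becomes $\nu + 2\rho^{\perp\Sigma}$ under the adjoint, and the $K_{4\rho^{\perp\Sigma}}$ twist in the covariance of $E$ brings it back to $\nu - 2\rho^{\perp\Sigma}$, matching the induction parameter on the $V$-side.
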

	\begin{proof}
	Trivial for $a \in U_q(\fg)$.

	The assertion follows for $a \in \cO(G_q)$ also by a calculation:
	\begin{align*}
	(x \otimes v, a^*(y \otimes w)) & = (x \otimes v, (a_{(3)}^* \triangleright y \triangleleft S(a_{(1)}^*)) \otimes \pi^\Sigma(a_{(2)}^* \triangleleft K_{-\overline{\nu}-2\rho^{\perp \Sigma}}) w) \\
	& = (E((a_{(3)}^* \triangleright y \triangleleft S(a_{(1)}^*))^* x) v,\pi^\Sigma(a_{(2)}^* \triangleleft K_{-\overline{\nu}-2\rho^{\perp\Sigma}})w) \\
	& = (\pi^\Sigma(a_{(2)} \triangleleft K_{\nu + 2\rho^{\perp\Sigma}})E((S(a_{(3)}) \triangleright y^* \triangleleft a_{(1)}) x)v,w) \\
	& = (((a_{(4)} \triangleleft K_{\nu+2\rho^{\perp \Sigma}}) \triangleright E((S(a_{(5)}) \triangleright y^* \triangleleft a_{(1)}) x) \triangleleft S(a_{(2)})) \pi^\Sigma(a_{(3)}) v,w) \\
	& = (E(y^*( (a_{(3)} \triangleleft K_{\nu-2\rho^{\perp \Sigma}}) \triangleright x \triangleleft S(a_{(1)}))) \pi^\Sigma(a_{(2)}) v,w) \\
	& = (E(y^*(a_{(3)} \triangleright x \triangleleft S(a_{(1)}))) \pi^\Sigma(a_{(2)} \triangleleft K_{\nu - 2\rho^{\perp \Sigma}}) v, w) \\
	& = ((S(a_{(3)}) \triangleright x \triangleleft a_{(1)}) \otimes \pi^\Sigma(a_{(2)} \triangleleft K_{\nu-2\rho^{\perp \Sigma}}) v, y \otimes w)\\
	& = (a(x \otimes v), y \otimes w).
	\end{align*}
	\end{proof}
	Since the invariant sesquilinear form is unique, we get the following.
	\begin{cor}\label{thm ind form}
	Let $\nu \in \fh^*$. Suppose $\nu$ satisfies the following three conditions.
	\begin{itemize}
	\item $V(0,\nu) = {\rm Ind}_\Sigma^\Pi (V(0,\nu^\Sigma),\nu^{\perp \Sigma})$,
	\item $V(0,\nu^\Sigma)$ admits a nonzero invariant sesquilinear form and
	\item $\nu^{\perp \Sigma}$ is imaginary.
	\end{itemize}
	Then $V(0,\nu)$ is unitarizable if and only if $V(0,\nu^\Sigma)$ is.
	\end{cor}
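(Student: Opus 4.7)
The overall strategy is to apply the preceding Proposition to transfer invariant sesquilinear forms between $V(0,\nu^\Sigma)$ and $V(0,\nu)$, and then exploit the uniqueness of invariant forms (Lemma \ref{thm Schur}) to run the comparison in both directions.

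For the implication ($\Leftarrow$), start with an invariant inner product $\langle\cdot,\cdot\rangle$ on $V(0,\nu^\Sigma)$. Since $\nu^{\perp\Sigma}$ is imaginary we have $-\overline{\nu^{\perp\Sigma}}=\nu^{\perp\Sigma}$, so the Proposition produces an invariant sesquilinear form on $V(0,\nu)={\rm Ind}_\Sigma^\Pi(V(0,\nu^\Sigma),\nu^{\perp\Sigma})$ paired with itself, namely
\[(x\otimes v,\,y\otimes w)=\langle E(y^*x)v,\,w\rangle.\]
Positivity of $E$ from the preceding lemma makes this form positive semidefinite. Its radical is a $D$-submodule of the irreducible module $V(0,\nu)$, hence either zero or everything; the latter is ruled out by explicit nonvanishing on the spherical vector $\Lambda(\varphi)\otimes v_0$, where $v_0$ is a spherical vector of $V(0,\nu^\Sigma)$. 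Therefore the form is positive definite and $V(0,\nu)$ is unitarizable.

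For the implication ($\Rightarrow$), let $h$ be the inner product on $V(0,\nu)$ and $\langle\cdot,\cdot\rangle$ the given nonzero invariant form on $V(0,\nu^\Sigma)$. The same construction produces an invariant form $h_\Sigma$ on $V(0,\nu)$, which by Lemma \ref{thm Schur}(i) must equal $c\,h$ for some $c\in\mathbb{R}$; the nonvanishing argument forces $c\neq 0$, so after replacing $\langle\cdot,\cdot\rangle$ by $\pm\langle\cdot,\cdot\rangle$ we may assume $c>0$ and thus $h_\Sigma$ is positive definite. Suppose for contradiction $\langle\cdot,\cdot\rangle$ is not positive definite and pick $v$ with $\langle v,v\rangle<0$; choose $x\in c_c(\dG_q)$ for which $E(x^*x)$ acts on $v$ as a strictly positive scalar. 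Then $h_\Sigma(x\otimes v,x\otimes v)=\langle E(x^*x)v,v\rangle<0$, contradicting positivity of $h_\Sigma$; hence $\langle\cdot,\cdot\rangle$ is a positive multiple of an inner product.

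The main obstacle is the pair of supporting computations: showing the induced form does not vanish on the spherical vector, and exhibiting an $x$ for which $E(x^*x)$ acts as a positive scalar on a prescribed $v\in V(0,\nu^\Sigma)$. Both should reduce to finite-dimensional calculations once $x$ is taken supported in a single $c_c(\dG_q)$-block, using the defining identity $\dphi(xy)=\dphi_\Sigma(E(x)y)$ together with the explicit formulas for $\dphi$ and $\dphi_\Sigma$ to compute $E(x^*x)$ up to the modular factors $K_{2\rho}$ and $K_{2\rho^\Sigma}$.
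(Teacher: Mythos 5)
Your strategy is the one the paper intends: the paper's ``proof'' of this corollary is the single sentence ``Since the invariant sesquilinear form is unique,'' resting on the unproved assertion made just before the preceding Proposition that the induced form is an inner product if and only if the form on $V$ is positive definite. You are in effect supplying the missing argument for that ``if and only if,'' and your outline is correct in both directions, including the role of the irreducibility of $V(0,\nu)$ (which forces the radical of the semidefinite form to be trivial) and the computation of the form on the spherical vector via $E(\varphi)=p^0_\Sigma$.

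The genuine gap is the one you flag yourself in $(\Rightarrow)$: the step ``choose $x\in c_c(\dG_q)$ for which $E(x^*x)$ acts on $v$ as a strictly positive scalar.'' As stated this is not automatic, and it is the crux of the whole direction. Here is how to close it cleanly. After reducing to a weight/isotypical vector $v$ with $\langle v,v\rangle<0$, you may assume $v$ lies in a single $\fg^\Sigma$-isotypical block $V(0,\nu^\Sigma)^\mu$, because the invariant Hermitian form on $V(0,\nu^\Sigma)$ is block-diagonal with respect to the $\fg^\Sigma$-isotypical decomposition. Now take $x=p^\lambda$ for a suitable $\lambda\in P_+$. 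Since $p^\lambda$ is central in $\cU(G_q)$ and $E$ is a $\cU(G_{\Sigma,q})$-bimodule map, $E(p^\lambda)$ is central in $\cU(G_{\Sigma,q})$, hence it acts on the $\mu$-block by a scalar $c^\lambda_\mu\geq 0$ (nonnegativity comes from positivity of $E$). A direct computation from the defining identity $\dphi(p^\lambda y)=\dphi_\Sigma(E(p^\lambda)y)$, separating $K_{2\rho}=K_{2\rho^\Sigma}K_{2\rho^{\perp\Sigma}}$ and using that $K_{2\rho^{\perp\Sigma}}$ is a strictly positive operator acting on the multiplicity space $\Hom_{\fg^\Sigma}(V_\Sigma(\mu),V(\lambda))$, shows that $c^\lambda_\mu>0$ precisely when $V_\Sigma(\mu)$ occurs in the branching of $V(\lambda)$ to $\fg^\Sigma$. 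The $\Sigma$-types appearing in a spherical $D^\Sigma$-module lie in $Q^\Sigma\cap P^\Sigma_+$ (their zero weight space is nonzero), and such $\mu$ do occur in $V(\lambda)|_{\fg^\Sigma}$ for appropriate $\lambda\in P_+$, giving the required $x$. You should also record that the pairing descends to $c_c(\dG_q)\otimes_{U_{q^\Sigma}(\fg^\Sigma)U_q(\fh)}V$ because $E$ is a bimodule homomorphism, and that positive \emph{semi}-definiteness of the induced form on arbitrary sums $\sum_i x_i\otimes v_i$ uses complete positivity of $E$ rather than mere positivity; this is standard for a $\dphi_\Sigma$-preserving conditional expectation, but it is worth saying.
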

	In the case of $\Sigma = \emptyset$, the pairing is given by
	\[(\Lambda(x),\Lambda(y)) = \dphi(y^* x),\]
	hence is an inner product on $L$ which satisfies
	\[(\pi^\nu(a)v,w) = (v,\pi^{-\overline{\nu}}(a^*)w).\]

	To conclude this section, we state an application of this sesquilinear pairing which plays an essential role in the next section.

	Let $L^0(0,\nu)$ be the cyclic submodule of $L(0,\nu)$ generated by $\Lambda(\varphi)$.
	\begin{lem}\label{thm cyc-ann}
	We have
	\[V(0,\nu) = L^0(0,\nu)/\Ann L^0(0,-\overline{\nu}).\]
	In particular, if $\Lambda(\varphi)$ is cyclic both in $L(0,\nu)$ and $L(0,-\overline{\nu})$, then $L(0,\nu)$ is irreducible.
	\end{lem}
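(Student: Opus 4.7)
My plan has two halves: first show $L^0(0,\nu)$ has a unique maximal submodule which must be $\Ann L^0(0,-\overline{\nu})$, and then deduce irreducibility from non-degeneracy.

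The basic setup: Since $\varepsilon|_{U_q(\fg)}$ acts trivially on $\varphi$, the vector $\Lambda(\varphi)$ is $U_q(\fg)$-fixed, so it spans the (one-dimensional) $0$-isotypical component of $L^0(0,\nu)$; the same holds for $L^0(0,-\overline{\nu})$. Since $\Lambda(\varphi)$ is cyclic by definition of $L^0(0,\nu)$, any proper $D$-submodule $N \subset L^0(0,\nu)$ must miss $\Lambda(\varphi)$, hence satisfies $N^0 = 0$. The sum $M_{\max}$ of all proper submodules therefore still satisfies $M_{\max}^0 = 0$, so is itself proper; thus $L^0(0,\nu)$ admits a unique maximal submodule $M_{\max}$, and $L^0(0,\nu)/M_{\max}$ is an irreducible spherical $D$-module, which by Proposition \ref{thm HC} must be $V(0,\nu)$.

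Let $M := \Ann L^0(0,-\overline{\nu}) \subset L^0(0,\nu)$; by invariance of the pairing this is a $D$-submodule. It suffices to prove $M = M_{\max}$. For $M \subset M_{\max}$, it is enough to show $M$ is proper, i.e.\ $\Lambda(\varphi) \notin M$; but $(\Lambda(\varphi),\Lambda(\varphi)) = \dphi(\varphi^*\varphi) = \dphi(\varphi) \neq 0$ by positivity of $\dphi$. For the reverse inclusion $M_{\max} \subset M$, the key is the following $U_q(\fg)$-orthogonality: if $v$ lies in a weight-$\lambda$-isotypical component with $\lambda \neq 0$, then $(v,\Lambda(\varphi)) = 0$. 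Indeed, $w \mapsto (w,\Lambda(\varphi))$ restricted to $V^\lambda$ is a $U_q(\fg)$-invariant linear functional by the invariance relation and the $U_q(\fg)$-invariance of $\Lambda(\varphi)$, and $V(\lambda)^*$ carries no invariants for $\lambda \neq 0$. Now given $v \in M_{\max}$ and $w \in L^0(0,-\overline{\nu})$, write $w = \pi^{-\overline{\nu}}(a)\Lambda(\varphi)$ for some $a \in D$; then $(v,w) = (\pi^\nu(a^*)v,\Lambda(\varphi))$, and since $\pi^\nu(a^*)v \in M_{\max}$ has vanishing $0$-isotypical component, the previous observation gives $(v,w)=0$. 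This shows $v \in M$, hence $M_{\max} \subset M$ and the first claim is proved.

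For the ``in particular'' clause, if $\Lambda(\varphi)$ is cyclic in both $L(0,\nu)$ and $L(0,-\overline{\nu})$, then $L^0(0,\nu) = L(0,\nu)$ and $L^0(0,-\overline{\nu}) = L(0,-\overline{\nu})$, so the first part of the lemma gives $L(0,\nu)/\Ann L(0,-\overline{\nu}) = V(0,\nu)$. Identifying $L$ with $\{\omega \in c_c(\dG_q) \mid \omega K_\lambda = \omega\}$ via $\Lambda$, the pairing becomes $(x,y) \mapsto \dphi(y^*x)$, which is non-degenerate since $\dphi$ is a faithful positive weight on $c_c(\dG_q)$; therefore $\Ann L(0,-\overline{\nu}) = 0$ and $L(0,\nu) \simeq V(0,\nu)$ is irreducible.

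The only subtle step is the $U_q(\fg)$-orthogonality between the $0$-isotypical and the other isotypical components of the invariant pairing; this is where the spherical hypothesis and the invariance of the pairing interact, and everything else is a straightforward cyclicity/annihilator bookkeeping.
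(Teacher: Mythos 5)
Your proof is correct and follows essentially the same approach as the paper's. The paper works downstairs — passing to the quotient $L^{00}(0,\nu) := L^0(0,\nu)/\Ann L^0(0,-\overline{\nu})$ and showing it is irreducible by splitting into the cases $\Lambda(\varphi)\in K$ and $\Lambda(\varphi)\notin K$ — while you work upstairs, identifying $\Ann L^0(0,-\overline{\nu})$ with the unique maximal submodule of $L^0(0,\nu)$. These are tautologically equivalent, and the crucial ingredient in both is identical: the observation that $U_q(\fg)$-invariance of the pairing together with $U_q(\fg)$-invariance of $\Lambda(\varphi)$ forces $(\,\cdot\,,\Lambda(\varphi))$ to factor through the $0$-isotypical component, so any proper submodule (which has trivial $0$-isotypical part) is orthogonal to $\Lambda(\varphi)$; you simply spell this orthogonality out more explicitly than the paper does, which compresses it into the single phrase ``since the pairing is $U_q(\fg)$-invariant, $\Lambda(\varphi) \in \Ann K$.''
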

	\begin{proof}
	Set $L^{00}(0,\nu) := L^0(0,\nu)/\Ann L^0(0,-\overline{\nu})$. Then this is an admissible spherical $D$-module with a $D$-invariant nondegenerate pairing
	\[L^{00}(0,\nu) \times L^{00}(0,-\overline{\nu}) \to {\mathbb C}.\]
	Notice that $\Lambda(\varphi)$ is cyclic both in $L^{00}(0,\nu)$ and $L^{00}(0,-\overline{\nu})$.

	Take a $D$-submodule $K$ of $L^{00}(0,\nu)$. If $\Lambda(\varphi) \in K$, $K = L^{00}(0,\nu)$ since $\Lambda(\varphi)$ is cyclic. If $\Lambda(\varphi) \not \in K$, since the pairing is $U_q(\fg)$-invariant, $\Lambda(\varphi) \in \Ann K$. Hence $\Ann K = L^{00}(0,-\overline{\nu})$. Since the pairing is nondegenerate, $K = 0$. Therefore $L^{00}(0,\nu)$ is irreducible.
	\end{proof}
	A consequence of the lemma above is that $V(0,\nu)$ and $V(0,-\overline{\nu})$ admit an invariant sesquilinear pairing. In particular, together with Lemma \ref{thm Schur} and Proposition \ref{thm HC}, we get the following.
	\begin{cor}\label{thm sesqui-form}
	The $D$-module $V(0,\nu)$ admits a nonzero invariant sesquilinear form if and only if $-\overline{\nu} = w \nu$ for $w \in W$.
	\end{cor}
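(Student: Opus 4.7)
The plan is to combine three earlier results: Proposition~\ref{thm HC}(iii) (which classifies the $V(0,\nu)$ by the Weyl orbit of $\nu$), the invariant pairing $V(0,\nu) \times V(0,-\overline{\nu}) \to \mathbb{C}$ extracted from Lemma~\ref{thm cyc-ann}, and the Schur-type uniqueness of Lemma~\ref{thm Schur}.

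For the forward direction, suppose $V(0,\nu)$ carries a nonzero invariant sesquilinear form. By definition this is an invariant sesquilinear pairing between the admissible irreducible $D$-module $V := V(0,\nu)$ and itself. On the other hand, the discussion following Lemma~\ref{thm cyc-ann} provides a nonzero invariant sesquilinear pairing between $V(0,\nu)$ and $V(0,-\overline{\nu})$. Applying Lemma~\ref{thm Schur}(ii) with $V = V(0,\nu)$, $W_1 = V(0,\nu)$ and $W_2 = V(0,-\overline{\nu})$ yields $V(0,\nu) \simeq V(0,-\overline{\nu})$ as $D$-modules. Proposition~\ref{thm HC}(iii) then forces $-\overline{\nu}$ and $\nu$ to lie in the same Weyl group orbit, i.e.\ $-\overline{\nu} = w\nu$ for some $w \in W$.

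For the converse, assume $-\overline{\nu} = w\nu$ for some $w \in W$. By Proposition~\ref{thm HC}(iii) we then have an isomorphism of $D$-modules $\Phi: V(0,\nu) \xrightarrow{\sim} V(0,-\overline{\nu})$. Pulling back the invariant pairing of Lemma~\ref{thm cyc-ann} along $\mathrm{id} \times \Phi$ produces a nonzero invariant sesquilinear form on $V(0,\nu)$.

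The only step requiring care is making sure that the pairing furnished by Lemma~\ref{thm cyc-ann} really is a pairing between the irreducible modules $V(0,\nu)$ and $V(0,-\overline{\nu})$ and not between some other quotients; this is exactly what the remark immediately preceding the corollary records, since the lemma identifies $V(0,\nu)$ with $L^{00}(0,\nu)$ (and symmetrically for $-\overline{\nu}$), and the pairing between $L^{00}(0,\nu)$ and $L^{00}(0,-\overline{\nu})$ is nondegenerate by construction. With this observation in hand, both directions are immediate applications of Lemma~\ref{thm Schur}(ii) and Proposition~\ref{thm HC}(iii).
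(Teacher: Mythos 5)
Your proof is correct and follows exactly the route the paper intends: the nondegenerate pairing between $V(0,\nu)$ and $V(0,-\overline{\nu})$ coming from Lemma~\ref{thm cyc-ann}, the Schur-type uniqueness of the "pairing partner" from Lemma~\ref{thm Schur}(ii), and the parametrization by Weyl orbits from Proposition~\ref{thm HC}(iii). The paper only states this in one line; your write-up fills in the steps in the same spirit, so there is nothing to flag.
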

	\section{Irreducibility}
	In this section, we give a criterion for the irreducibility of $L(0,\nu)$. The results of this section deeply depend on those of Section \ref{sec-ad}.

	As in the remark before Lemma \ref{thm CqGq}, each elements in the $\lambda$-isotypical component of $\cO(G_q)$ with respect to ${\rm ad}^S$ transports $V^0$ to $V^\lambda$. In $L(0,\nu)$, we can say more,
	\begin{align*}
	x \Lambda(\varphi) & = K_{\nu-2\rho}(x_{(2)}) \Lambda(x_{(3)} \triangleright \varphi \triangleleft S(x_{(1)})) \\
	& = K_{\nu-2\rho}(x_{(2)}) \Lambda(\varphi \triangleleft (S^2(K_{4\rho} \triangleright x_{(3)})S(x_{(1)}))) \\
	& = K_{\nu+2\rho}(x_{(2)}) \Lambda(\varphi \triangleleft S(x_{(1)}S(x_{(3)}))).
	\end{align*}

	Notice that
	\[\Gamma:x \mapsto x_{(1)} S(x_{(3)}) \otimes x_{(2)}\]
	is a coaction of $\cO(G_q)$ on itself and
	\[(a \otimes {\rm id})\Gamma(x) = {\rm ad}^S(a)(x).\]

	Consider $H := S \circ I^{-1}(\bH)$. Due to Lemma \ref{thm Bau}, this is an ${\rm ad}^S$ invariant subspace of $\cO(G_q)$ such that the multiplication map
	\[H \otimes \cO(\CqGq) \to \cO(G_q)\]
	is isomorphic.

	Fix $\lambda \in P_+$ and put $m := \dim V(\lambda)_0$.
	Recall $(a_{ij}) \subset \bH$ in Section \ref{sec-ad}. Let $b_{ij} := S(I^{-1}(a_{ij}))$.
	Let $(f_i)$ be the dual basis of $(e_i)$ and $c^\lambda_{ij} := c^\lambda_{f_i,e_j}$.
	Then we have $\Gamma(b_{ij}) = \sum_k c^\lambda_{ik} \otimes b_{kj}$.
	Hence the calculation above shows
	\[b_{ij} \Lambda(\varphi) = \sum_{k=1}^m K_{\nu+2\rho}(b_{kj}) \Lambda(\varphi \triangleleft S(c^\lambda_{ik})).\]
	(For this, notice that $\Lambda(\varphi \triangleleft S(c^\lambda_{ij})) = 0$ unless $e_j$ is of weight $0$.)

	Thus the multiplicity of $V(\lambda)$ in $L^0(0,\nu)$ is the rank of the matrix $(K_{\nu+2\rho}(b_{ij}))_{i,j = 1}^m$.

	\begin{align*}
	(K_{\nu+2\rho}(b_{ij}))_{i,j} & = (K_{\nu+2\rho}(S(I^{-1}(a_{ij}))))_{i,j} \\
	& = (K_{-\nu-2\rho}(\pi_T(I^{-1}(a_{ij}))))_{i,j}\\
	& = (K_{-\nu-2\rho}(\Theta \circ \cP(a_{ij})))_{i,j} \\
	& = \cP_\lambda(-\nu/2 - \rho).
	\end{align*}
	Using this equality, we can reformulate Corollary \ref{thm PRV} in our setting.
	\begin{prop}\label{thm rank}
	Fix $\nu \in \fh^*$. We have the following.
	\begin{enumerate}[{\rm (i)}]
	\item $[L^0(0,\nu):V(\lambda)] = [F(U_q(\fg))/\Ann V(-\nu/2 - \rho): V(\lambda)]$.
	\item The spherical vector is cyclic in $L(0,\nu)$ if and only if $(\nu,\alpha^\vee) \not \in 2{\mathbb Z}_- + 2 \pi i \log(q)^{-1} {\mathbb Z}$ for any $\alpha \in \Delta_+$. In particular, any admissible spherical irreducible representation is a quotient of $L(0,\nu)$ for some $\nu \in \fh^*$.
	\item Any nonzero submodule of $L(0,\nu)$ contains the spherical vector if and only if $(\nu,\alpha^\vee) \not \in 2{\mathbb Z}_+ + 2 \pi i \log(q)^{-1} {\mathbb Z}$ for any $\alpha \in \Delta_+$. In this case, $L^0(0,\nu) = V(0,\nu)$. In particular, any admissible spherical irreducible representation is a subrepresentation of $L(0,\nu)$ for some $\nu \in \fh^*$. 
	\item The $D$-module $L(0,\nu)$ is irreducible if and only if $(\nu,\alpha^\vee) \not \in (2 \mathbb Z \setminus \{0\}) + 2 \daZ$ for any $\alpha \in \Delta_+$.
	\end{enumerate}
	\end{prop}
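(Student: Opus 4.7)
The plan hinges on the matrix identity $[L^0(0,\nu):V(\lambda)] = \mathrm{rank}\,\cP_\lambda(-\nu/2-\rho)$ established immediately before the proposition. Part (i) is then nothing but Corollary \ref{thm PRV}(i) evaluated at the point $-\nu/2-\rho$.

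For (ii), I would observe that $\Lambda(\varphi)$ is cyclic in $L(0,\nu)$ precisely when $L^0(0,\nu)=L(0,\nu)$. Since $[L(0,\nu):V(\lambda)]$ is always $\dim V(\lambda)_0 = m$, this equality of multiplicities amounts to the $m\times m$ matrix $\cP_\lambda(-\nu/2-\rho)$ having full rank, i.e.\ being invertible, for every $\lambda\in P_+$. Corollary \ref{thm PRV}(ii) applied at $-\nu/2-\rho$ translates this to $(-\nu/2,\alpha^\vee)\not\in\mathbb{Z}_+ + \daZ$, which rearranges to the stated arithmetic condition on $(\nu,\alpha^\vee)$. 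For the ``in particular'' clause, Corollary \ref{thm X/W} and Proposition \ref{thm HC} express every irreducible admissible spherical $D$-module as $V(0,\nu)$ for some $\nu\in\fh^*$; after replacing $\nu$ with a Weyl conjugate whose real part lies in the closed dominant chamber, each $(\nu,\alpha^\vee)$ acquires nonnegative real part and hence avoids $2\mathbb{Z}_-$, so the cyclic condition holds and $V(0,\nu)$ realizes as the unique spherical quotient of $L(0,\nu)$.

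For (iii) I would dualize via the invariant sesquilinear pairing of Section 4 taken with $\Sigma=\emptyset$, namely $(\Lambda(x),\Lambda(y))=\dphi(y^*x)$ on $L(0,\nu)\times L(0,-\overline{\nu})$. Since $\dphi$ is a faithful weight and $\Lambda$ is injective on $L$, this pairing is non-degenerate, so the operation $K\mapsto K^\perp$ gives an order-reversing bijection between submodules of $L(0,\nu)$ and submodules of $L(0,-\overline{\nu})$. A direct computation with $\varphi=p^0$ shows $(\Lambda(\varphi),\Lambda(y))=\overline{y(0)}$, the conjugate of the coefficient of $y$ at $V(0)$. Consequently, for any submodule $K\subseteq L(0,\nu)$ with orthogonal $K^\perp\subseteq L(0,-\overline{\nu})$, the spherical vector $\Lambda(\varphi)$ lies in $K$ if and only if it fails to lie in $K^\perp$. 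Running this equivalence through the bijection $K\leftrightarrow K^\perp$ shows: every nonzero submodule of $L(0,\nu)$ contains $\Lambda(\varphi)$ iff every proper submodule of $L(0,-\overline{\nu})$ misses $\Lambda(\varphi)$, iff $\Lambda(\varphi)$ is cyclic in $L(0,-\overline{\nu})$. Applying (ii) to $-\overline{\nu}$ and using that $\alpha^\vee$ is real, together with the invariance of $2\mathbb{Z}_+ + 2\pi i\log(q)^{-1}\mathbb{Z}$ under complex conjugation, produces precisely the condition stated in (iii). The tail-end equality $L^0(0,\nu)=V(0,\nu)$ is immediate: under the hypothesis, any nonzero submodule of $L^0(0,\nu)$ contains $\Lambda(\varphi)$ and hence equals $L^0(0,\nu)$, making $L^0(0,\nu)$ simple and therefore equal to the unique irreducible spherical subquotient $V(0,\nu)$; the ``in particular'' clause then follows by passing to an antidominant Weyl conjugate.

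Finally (iv) is the conjunction of (ii) and (iii): $L(0,\nu)$ is irreducible iff $\Lambda(\varphi)$ is cyclic (else $L^0(0,\nu)$ is a proper nonzero submodule) and every nonzero submodule contains $\Lambda(\varphi)$ (else such a submodule is proper), so merging the two excluded sets $2\mathbb{Z}_-$ and $2\mathbb{Z}_+$ gives $2\mathbb{Z}\setminus\{0\}$. The main technical delicacy, as indicated, is the duality bookkeeping in (iii); the crucial input is the explicit evaluation $(\Lambda(\varphi),\Lambda(y))=\overline{y(0)}$, which is what ties ``$\Lambda(\varphi)\in K$'' to ``$\Lambda(\varphi)\notin K^\perp$'' and makes the reduction of (iii) at $\nu$ to (ii) at $-\overline{\nu}$ work cleanly.
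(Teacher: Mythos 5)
Your proposal is correct and follows the same route as the paper: part (i) is Corollary \ref{thm PRV}(i) evaluated at $-\nu/2-\rho$; part (ii) reduces to invertibility of $\cP_\lambda(-\nu/2-\rho)$ via the preceding matrix computation and Corollary \ref{thm PRV}(ii); part (iii) is obtained from (ii) at $-\overline\nu$ through the invariant pairing; and part (iv) combines (ii) and (iii). Where you add value is in the detail: the paper disposes of (iii) in one sentence (``equivalent to (ii) in view of the pairing''), whereas you make the order-reversing bijection $K\mapsto K^\perp$ explicit, verify via the evaluation $(\Lambda(\varphi),\Lambda(y))=\overline{y(0)}$ (together with $U_q(\fg)$-invariance of the pairing, which forces $(\Lambda(\varphi),\cdot)$ to vanish off the $0$-isotypical component) that $\Lambda(\varphi)\in K$ iff $\Lambda(\varphi)\notin K^\perp$, and carefully track the passage from $2\mathbb{Z}_-$ at $-\overline\nu$ to $2\mathbb{Z}_+$ at $\nu$. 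The paper instead cites Lemma \ref{thm cyc-ann} for (iv), which literally gives only the sufficiency of the arithmetic condition for irreducibility; your direct derivation of (iv) as the conjunction of (ii) and (iii) is cleaner and delivers both implications at once. You also supply justification for the ``in particular'' clauses (choosing a Weyl conjugate with dominant, resp.\ antidominant, real part) that the paper simply asserts. One tacit point worth noting in your bijection argument: $K^{\perp\perp}=K$ requires admissibility (finite multiplicities) plus the fact that the pairing splits along isotypical components and is non-degenerate on each, which follows from faithfulness of $\dphi$; you gesture at this but it deserves a word.
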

	\begin{proof}
	From the argument above together with (i) in Corollary \ref{thm PRV}, we get (i).

	For (ii), by definition, the spherical vector is cyclic if and only if $L^0(0,\nu) = L(0,\nu)$. On the other hand, since $[L(0,\nu):V(\lambda)] = \dim V(\lambda)_0 = m$, the argument above shows $L^0(0,\nu) = L(0,\nu)$ if and only if $\cP_\lambda(-\nu/2 - \rho)$ is invertible for any $\lambda \in P_+ \cap Q$. Combining with (ii) in Corollary \ref{thm PRV}, we get (ii).

	Part (iii) is equivalent to (ii) in view of the pairing between $L(0,\nu)$ and $L(0,-\overline{\nu})$. Part(iv) follows from Lemma \ref{thm cyc-ann}. 
	\end{proof}
	As corollaries, we get the irreducibility of certain $D$-modules we have constructed. First, we begin with the integral case.
	\begin{cor}\label{thm integral}
	For $\mu \in P_+ + \pi i \log(q)^{-1} P$,
	\[V(0,-2\mu - 2\rho) \simeq V(\mu) \otimes V(\mu)^*\]
	as a $D$-module, where the module structure of the right hand side is defined as in Theorem \ref{thm findim}, namely,
	\[a(v \otimes w) = a_{(1)}v \otimes a_{(2)} w,\]
	\[x (v \otimes w) = \Psi(x) (v \otimes w)\]
	for $a \in U_q(\fg)$, $x \in \cO(G_q)$.
	\end{cor}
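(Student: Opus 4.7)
The plan is to exhibit $W := V(\mu) \otimes V(\mu)^*$ as a spherical $D$-module that is irreducible and whose parameter in $X/W$ (via Corollary \ref{thm X/W}) is the class of $\nu := -2\mu - 2\rho$. The $D$-action is well-defined because it is the restriction of the natural $U_q(\fg) \otimes U_q(\fg)$-action along the injective algebra map $\dD \times \Psi: D \to U_q(\fg) \otimes U_q(\fg)$; the module is finite-dimensional, hence admissible, and spherical with spherical vector $e = \mathrm{id}_{V(\mu)} \in \End(V(\mu)) \simeq W$ (the diagonal $U_q(\fg)$-action on $V(\mu) \otimes V(\mu)^*$ is the adjoint action on $\End(V(\mu))$, which fixes $e$).

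I first match $U_q(\fg)$-multiplicities. For every $\alpha \in \Delta_+$,
\[\Re(\nu, \alpha^\vee) = -2\bigl((\Re(\mu), \alpha^\vee) + (\rho, \alpha^\vee)\bigr) \leq -2\]
(using $\Re(\mu) \in P_+$), so the hypothesis of Proposition \ref{thm rank}(iii) is satisfied and $V(0,\nu) = L^0(0,\nu)$. Proposition \ref{thm rank}(i) then gives
\[[V(0,\nu) : V(\lambda)] = [F(U_q(\fg))/\Ann V(\mu) : V(\lambda)].\]
Since $F(U_q(\fg))$ is the locally finite part of $U_q(\fg)$ under the adjoint action, Burnside density yields $F(U_q(\fg))/\Ann V(\mu) \simeq \End(V(\mu)) \simeq V(\mu) \otimes V(\mu)^*$ as adjoint $U_q(\fg)$-modules, so $V(0,\nu)$ and $W$ share the same $U_q(\fg)$-multiplicity structure; in particular $\dim W^0 = 1$.

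It remains to verify that the $\cO(\CqGq) \simeq \varphi D_c \varphi$-character on $e$ equals $K_{-2\mu} = K_{\nu+2\rho}$, the character on $\Lambda(\varphi) \in V(0,\nu)$ from Proposition \ref{thm HC}. Since $\cO(\CqGq)$ (identified with $\varphi D_c \varphi$ via $x \mapsto x\varphi$) preserves $W^0 = \mathbb{C} e$, every $x \in \cO(\CqGq)$ acts on $e$ by a scalar $c(x)$. Evaluating on the basis $\{\chi_q(\lambda)\}$ and unpacking the action of $\Psi(\chi_q(\lambda))$ on $V(\mu) \otimes V(\mu)^*$ via the skew pairing on $U_q(\fb^-) \otimes U_q(\fb^+)$ (together with Lemma \ref{thm PPsi} applied to the weight vectors composing $e$) yields $c(\chi_q(\lambda)) = K_{-2\mu}(\chi_q(\lambda))$, hence $c = K_{-2\mu}$. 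By Lemma \ref{thm classification of admissible}(i) applied with $\lambda = 0$, $W$ admits a unique irreducible subquotient containing $e$, and the $\cO(\CqGq)$-character identifies it with $V(0,\nu)$; combined with the equality of $U_q(\fg)$-multiplicities, this forces the subquotient to be all of $W$, giving $W \simeq V(0, -2\mu - 2\rho)$. I expect the main obstacle to be this character computation, which requires careful bookkeeping with the $K_{-2\rho}$-twist in $\chi_q(\lambda)$ and with the bilateral action of $\Psi(x)$ on $\End(V(\mu))$; once resolved, the rest reduces to Proposition \ref{thm rank} and the multiplicity comparison.
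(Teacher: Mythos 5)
Your proposal is correct and follows essentially the same route as the paper: verify that $e = \mathrm{id}_{V(\mu)}$ is a spherical vector on which $\cO(\CqGq)$ acts by $K_{-2\mu}$ (via Lemma~\ref{thm PPsi} and the triangular form of $\Psi(x)$), use Proposition~\ref{thm rank}(i) and $F(U_q(\fg))/\Ann V(\mu)\simeq\End(V(\mu))$ to match $U_q(\fg)$-multiplicities with $L^0(0,\nu)$, and apply Proposition~\ref{thm rank}(iii) to conclude $V(0,\nu)=L^0(0,\nu)$ and hence force the subquotient to exhaust $V(\mu)\otimes V(\mu)^*$. The only difference is the order of the steps and that the paper's character computation is slightly cleaner — it reduces directly to pairing against the highest-weight component $l_\mu\otimes v_\mu$, where only $(\cP\otimes\cP)(\Psi(x))$ contributes — but the substance is identical.
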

	\begin{proof}
	First, the left hand side is a $U_q(\fg)$-module of type 1.

	We claim
	\[\Psi(x) v = K_{-2\mu}(x) v\]
	for $x \in \cO(\CqGq)$ and the unique (up to a scalar factor) $U_q(\fg)$-invariant vector $v$ in $V(\lambda) \otimes V(\lambda)^*$. Here since $x \in \cO(\CqGq)$ preserves $0$-isotypical component, we know $\Psi(x)v$ is a multiple of $v$. Therefore we only need to show
	\[(\Psi(x)v, l_\mu \otimes v_\mu) = K_{-2\mu}(x) (v,l_\mu \otimes v_\mu),\]
	where $l_\mu \in V(\mu)^*$ such that $l_\mu(v_\mu) = 1$, $l_\mu(v) = 0$ for any weight vector $v$ which is not of highest weight.
	Now since $\Psi(x)$ is in $U_q(\fb^+) \otimes U_q(\fb^-)$, the left hand side is nothing but
	\[((\cP \otimes \cP)(\Psi(x)) v,l_\mu \otimes v_\mu).\]
	Now Lemma \ref{thm PPsi} asserts this is equal to
	\[K_{-2\mu}(x) (v, l_\mu \otimes v_\mu),\]
	which shows the claim.

	Therefore from Lemma \ref{thm classification of admissible} and Proposition \ref{thm HC}, $V(\mu) \otimes V(\mu)^*$ contains $V(0,{-2\mu-2\rho})$ as a subquotient. Since $F(U_q(\fg))/\Ann V(\mu) \simeq \End(V(\mu))$, the first assertion of the proposition above shows
	\[[V(\mu) \otimes V(\mu)^*:V(\lambda)] = [L^0(0,{-2\mu-2\rho}):V(\lambda)]\]
	for any $\lambda \in P_+$. Using (iii) of Proposition \ref{thm rank}, we get
	\[V(0,-2\mu - 2\rho) = L^0(0,{-2\mu-2\rho}) \simeq V(\mu) \otimes V(\mu)^*.\]
	\end{proof}
	Consequently, we get a description of $V(0,\nu)$ in the case of $\fg = \frsl_2$.
	\begin{cor}
	Let $\fg := \frsl_2$. Take $\nu \in {\mathbb C} / 2 \pi i \log(q)^{-1} {\mathbb Z}$.
	\begin{enumerate}[{\rm (i)}]
	\item If $\nu \not \in ({\mathbb Z} \setminus \{0\}) + \pi i \log(q)^{-1} {\mathbb Z}$, $V(0,\nu) = L(0,\nu)$.
	\item If $\nu \in {\mathbb Z}_- + \pi i \log(q)^{-1} {\mathbb Z}$, $\displaystyle V(0,\nu) = V\left(\frac{-\nu-1}{2}\right) \otimes V\left(\frac{-\nu-1}{2}\right)^*.$
	\end{enumerate}
	\end{cor}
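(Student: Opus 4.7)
The plan is to deduce both parts directly from the general results already at our disposal; the $\frsl_2$ specialization amounts essentially to bookkeeping of parameters. I first record the relevant normalizations: with $\fh^* = \mathbb{C}$ and the unique positive root identified with $\alpha = 1$, we have $(\alpha,\alpha)=2$, $q_\alpha = q$, $\alpha^\vee = 1$, $\rho = \tfrac{1}{2}$, so $(\nu,\alpha^\vee) = 2\nu$ and $\daZ = \pi i \log(q)^{-1}\mathbb{Z}$. For (i), multiplying the hypothesis $\nu \not\in (\mathbb{Z}\setminus\{0\}) + \pi i\log(q)^{-1}\mathbb{Z}$ by $2$ gives exactly $(\nu,\alpha^\vee) \not\in (2\mathbb{Z}\setminus\{0\}) + 2\daZ$, which is the hypothesis of Proposition \ref{thm rank}(iv) (there is only one positive root to check). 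That proposition then says $L(0,\nu)$ is irreducible, and since $V(0,\nu)$ is the unique irreducible spherical subquotient of $L(0,\nu)$, this forces $V(0,\nu) = L(0,\nu)$.

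For (ii), I would set $\mu := (-\nu-1)/2$ and verify that $\mu \in P_+ + \pi i \log(q)^{-1} P$ so that Corollary \ref{thm integral} applies. Writing $\nu = -n + \pi i\log(q)^{-1}k$ with $n \in \mathbb{Z}_+$ and $k \in \mathbb{Z}$ gives $\mu = \tfrac{n-1}{2} - \tfrac{k}{2}\pi i\log(q)^{-1}$; here $\tfrac{n-1}{2} \in \tfrac{1}{2}\mathbb{Z}_{\geq 0} = P_+$ and $-\tfrac{k}{2} \in \tfrac{1}{2}\mathbb{Z} = P$. A direct computation using $\rho = \tfrac{1}{2}$ gives $-2\mu - 2\rho = (\nu+1) - 1 = \nu$, so Corollary \ref{thm integral} yields $V(0,\nu) \simeq V((-\nu-1)/2) \otimes V((-\nu-1)/2)^*$ as claimed.

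Neither step involves a serious obstacle; the content is essentially bookkeeping on the $\frsl_2$ parameters. The only small point to be careful about is the verification that $(-\nu-1)/2$ actually lands in $P_+ + \pi i\log(q)^{-1}P$ for every $\nu$ allowed in (ii), which uses both $n-1 \geq 0$ (since $\mathbb{Z}_- = \{-1,-2,\dots\}$ forces $n \geq 1$) and the fact that $P = \tfrac{1}{2}\mathbb{Z}$ absorbs halves of integers and their imaginary shifts by $\pi i \log(q)^{-1}$.
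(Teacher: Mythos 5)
Your proof is correct and takes essentially the approach the paper intends (the paper gives no explicit proof, just ``Consequently''). Part (i) is exactly the specialization of Proposition \ref{thm rank}(iv) using $(\nu,\alpha^\vee)=2\nu$, $q_\alpha = q$, $\rho = 1/2$, and part (ii) is the specialization of Corollary \ref{thm integral} with $\mu = (-\nu-1)/2$ so that $-2\mu - 2\rho = \nu$; your check that $\mu \in P_+ + \pi i\log(q)^{-1}P$ (using $n \geq 1$ and $P = \tfrac12\mathbb{Z}$) is the only point requiring care and you handle it correctly.
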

	We prepare an additional proposition.
	\begin{prop}\label{thm irrep ind}
	Let $\Sigma \subset \Pi$. Suppose that
	for any $\alpha \in \Delta$, if $(\nu,\alpha^\vee) \in 2{\mathbb Z} + 2 \daZ$, then $\alpha \in {\rm span} \Sigma$.
	Then
	\[V(0,\nu) = {\rm Ind}_\Sigma^\Pi (V(0,\nu^\Sigma),\nu^{\perp \Sigma}).\]
	\end{prop}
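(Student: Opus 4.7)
The plan is to use the induction-by-step lemma together with the rank and cyclicity arguments of Proposition~\ref{thm rank} and Lemma~\ref{thm cyc-ann}. By the induction-by-step lemma, $L(0,\nu)\simeq{\rm Ind}_\Sigma^\Pi(L(0,\nu^\Sigma),\nu^{\perp\Sigma})$ and $M:={\rm Ind}_\Sigma^\Pi(V(0,\nu^\Sigma),\nu^{\perp\Sigma})$ is a spherical subquotient of $L(0,\nu)$. Since $V(0,\nu)$ is the unique irreducible spherical subquotient of $L(0,\nu)$ by Lemma~\ref{thm classification of admissible} and Proposition~\ref{thm HC}, the proposition is reduced to showing that $M$ itself is irreducible.

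To prove irreducibility, I would mimic the cyclicity-plus-annihilator argument of Lemma~\ref{thm cyc-ann}, applied to $M$ paired with the analogous dual induced module $M':={\rm Ind}_\Sigma^\Pi(W,-\overline{\nu^{\perp\Sigma}})$, where $W$ is the contragredient of $V(0,\nu^\Sigma)$ (an admissible $D^\Sigma$-module). The invariant sesquilinear pairing between $M$ and $M'$ is the one built in Section~4 from the pairing between $V(0,\nu^\Sigma)$ and $W$. Granted that the spherical vector is cyclic both in $M$ and in $M'$, the argument of Lemma~\ref{thm cyc-ann} transports verbatim and gives irreducibility of $M$.

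To establish cyclicity of the spherical vector in $M$, I would relativize the calculation preceding Proposition~\ref{thm rank}. Expanding the action of ad-homogeneous elements $b_{ij}\in\cO(G_q)$ on the spherical vector of $M$ yields a matrix whose rank is controlled by the quantum PRV determinant evaluated at $-\nu/2-\rho$. Under the hypothesis, for every $\alpha\in\Delta_+\setminus\Delta_+^\Sigma$ the factor $(K_\alpha-q_\alpha^{2(n-(\rho,\alpha^\vee))}K_{-\alpha})(-\nu/2-\rho)$ is nonzero for all $n\in\mathbb{Z}_+$, so any potential rank deficiency comes only from $\alpha\in\Delta_+^\Sigma$, and those deficiencies have already been quotiented away in passing from $L(0,\nu^\Sigma)$ to $V(0,\nu^\Sigma)$. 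The argument for $M'$ is identical since the hypothesis $(\nu,\alpha^\vee)\notin 2{\mathbb Z}+2\daZ$ is stable under $\nu\mapsto-\overline\nu$.

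The main obstacle is making this ``block factorization'' rigorous: one needs to decompose the rank computation of $\cP_\lambda(-\nu/2-\rho)$ into a $\Sigma$-contribution that matches the $\Sigma$-version of Proposition~\ref{thm rank} applied to $V(0,\nu^\Sigma)$, together with a transverse contribution that is of full rank by the hypothesis. Equivalently, one needs a compatibility between the quantum PRV determinants for $\fg$ and $\fg^\Sigma$ under the branching $V(\lambda)|_{\fg^\Sigma}$, so that the final multiplicity identity $[M:V(\lambda)]=[V(0,\nu):V(\lambda)]$ drops out. Formalizing this matching, especially to handle $\Sigma$-weight contributions $\dim V(\lambda)_{n\alpha}$ versus $\dim V^\Sigma(\mu)_{n\alpha}$ for $\alpha\in\Delta^\Sigma$, is the technical heart of the argument.
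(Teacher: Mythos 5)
Your reduction is sound: by the induction-by-step lemma the module $M := {\rm Ind}_\Sigma^\Pi(V(0,\nu^\Sigma),\nu^{\perp\Sigma})$ is a spherical subquotient of $L(0,\nu)$, so if $M$ is irreducible it must equal $V(0,\nu)$. But the plan for proving irreducibility of $M$ has a genuine gap, which you yourself flag: you need to know that the spherical vector generates $M$ on both sides and, moreover, that the resulting radical is zero (the argument of Lemma~\ref{thm cyc-ann} only produces a nondegenerate pairing after passing to a quotient $M^{00}$, so without an extra multiplicity count it only shows $V(0,\nu)$ is a \emph{quotient} of $M$, not equal to it). Both of these reduce to a ``block factorization'' of the quantum PRV determinant, namely matching $[M:V(\lambda)]$ against $[L^0(0,\nu):V(\lambda)]$ by decomposing $\cP_\lambda(-\nu/2-\rho)$ into a full-rank transverse part and a $\Sigma$-part that ``is'' $\cP^\Sigma_\mu$. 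There is no such factorization in the paper; $\cP_\lambda$ lives over $U_q(\fh)$ and does not split along $\Delta_+ = \Delta_+^\Sigma \amalg (\Delta_+\setminus\Delta_+^\Sigma)$ in a way compatible with branching to $\fg^\Sigma$, and the $\dim V(\lambda)_{n\alpha}$ vs.\ $\dim V^\Sigma(\mu)_{n\alpha}$ mismatch you point to is exactly where this breaks.

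The paper sidesteps all of this. After replacing $\nu$ by a $W^\Sigma$-conjugate with $\Re(\nu,\alpha^\vee)\geq 0$ for $\alpha\in\Sigma$, the hypothesis makes $\nu$ fall into case (ii) of Proposition~\ref{thm rank} (spherical vector cyclic) and its longest-element conjugate $w_0\nu$, $w_0\in W^\Sigma$, into case (iii) (every nonzero submodule contains the spherical vector). One then shows $\Hom_D(L(0,\nu),L(0,w_0\nu))$ is one-dimensional, picks the analogous $D^\Sigma$-intertwiner $T\colon L(0,\nu^\Sigma)\to L(0,w_0\nu^\Sigma)$ (whose image is $V(0,w_0\nu^\Sigma)$), and observes that $1\otimes_{B_\Sigma}T$ is, up to scalar, the unique $D$-intertwiner; its image is simultaneously $V(0,\nu)$ and ${\rm Ind}_\Sigma^\Pi(V(0,w_0\nu^\Sigma),\nu^{\perp\Sigma})$. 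No PRV block computation is needed — only the already-established rank criteria for $L(0,\nu)$ itself. I would suggest pursuing this intertwiner route rather than trying to make the determinant factorization precise.
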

	\begin{proof}
	By conjugating by a element in $W^\Sigma$ if necessary, we may assume $\Re(\nu,\alpha^\vee) \geq 0$ for any $\alpha \in \Sigma$. 

	Let $w_0$ be the longest element of $W^\Sigma$.
	Notice that $\nu$ is in the case (ii) in Proposition \ref{thm rank}, while $w_0 \nu$ is in the case (iii).

	We first claim $\Hom_D(L(0,\nu),L(0,w_0 \nu))$ is $1$-dimensional.
	
	Take $T \in \Hom_D(L(0,\nu),L(0,w_0 \nu))$.
	Consider the kernel $K$ of the quotient map $L(0,\nu) \to V(0,\nu)$. Then $T K$ is a submodule of $L(0,\nu)$ which does not contain the spherical vector.  
	Thanks to Proposition \ref{thm rank}, $TK = 0$.
	Therefore $T$ factors
	\[L(0,\nu) \to V(0,\nu) \to L(0, w_0 \nu).\]
	Since $T$ sends the spherical vector to the spherical vector, the image of $T$ is contained in $L^0(0,w_0 \nu) = V(0,w_0 \nu)$.
	As a consequence, $T$ factors
	\[L(0,\nu) \to V(0,\nu) \to V(0,w_0 \nu) \subset L(0,w_0 \nu).\]
	Now Schur's lemma implies the claim.

	Take a nonzero $T \in \Hom_{D^\Sigma}(L(0,\nu^\Sigma),L(0,w_0 \nu^\Sigma))$. The claim shows such $T$ is unique up to a scalar factor. In particular, the image of $T$ is $V(0,w_0 \nu^\Sigma)$.
	Now using the isomorphism $L(0,\nu) \simeq {\rm Ind}_\Sigma^\Pi (L(0,\nu^\Sigma),\nu^{\perp \Sigma}) = D_c \otimes_{B_\Sigma} L(0,\nu^\Sigma)_{(0,(\nu-2\rho)^{\perp \Sigma})}$,
	\[1 \otimes_{B_\Sigma} T \in \Hom_{D^\Sigma}(L(0,\nu),L(0,w_0 \nu)).\]
	Hence
	\[V(0,\nu) \simeq {\rm Im} (1 \otimes_{B_\Sigma} T) = {\rm Ind}_\Sigma^\Pi (V(0,w_0 \nu^\Sigma),\nu^{\perp \Sigma}) \simeq {\rm Ind}^\Pi_\Sigma(V(0,\nu^\Sigma),\nu^{\perp \Sigma}).\]
	(We again used the claim for the first isomorphism.)
	\end{proof}
	\section{Intertwining operators}
	In this section, we complete the classification of unitarizable admissible spherical irreducible $D$-modules for $\fg = \frsl_3$.

	Let us begin with an easy obstruction for the unitarizability coming from the norm estimate which contains the case in Corollary \ref{thm integral}.

	\begin{lem}\label{thm fd unitary}
	Let $\mu \in \fh^*$ such that $V(0,\mu)$ is unitarizable. Then for any $\lambda \in P_+$,
	\[|{\rm Tr}_\lambda(K_{\mu})| \leq {\rm Tr}_\lambda(K_{2 \rho}).\]
	In particular, if $\Re\mu - 2 \rho$ is nonzero and dominant, then $V(0,\mu+2\rho)$ is not unitarizable.
	\end{lem}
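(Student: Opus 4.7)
The plan is to apply the central element $\chi_q(\lambda) \in \cO(\CqGq) \subset \cO(G_q)$ to a unit spherical vector $v_0 \in V(0,\mu)$, read off a Cauchy--Schwarz bound, and derive the nonunitarizability consequence by an asymptotic estimate on characters. The first key computation is $\chi_q(\lambda) v_0 = {\rm Tr}_\lambda(K_\mu) v_0$. The calculation already carried out in the proof of Proposition~\ref{thm HC} gives $x v_0 = K_{\mu+2\rho}(x) v_0$ for every $x \in \cO(\CqGq)$, so I only need to evaluate the character on $\chi_q(\lambda) = \sum_i (K_{-2\rho} v_i, l_i) c^\lambda_{l_i, v_i}$ using a weight basis $(v_i)$ of $V(\lambda)$ with dual basis $(l_i)$; this yields $K_{\mu+2\rho}(\chi_q(\lambda)) = \sum_i q^{-2(\rho,\omega_i)} q^{(\mu+2\rho,\omega_i)} = \sum_i q^{(\mu,\omega_i)} = {\rm Tr}_\lambda(K_\mu)$.

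Next I would bound $\|\chi_q(\lambda) v_0\|$. Choose $(v_i)$ orthonormal with respect to the invariant inner product on $V(\lambda)$, so that $U^\lambda := (u^\lambda_{ij}) = (c^\lambda_{l_i,v_j})$ is unitary in $M_{\dim V(\lambda)}(\cO(G_q))$. Since $V(0,\mu)$ carries a $*$-representation of $D$, the algebraic relation $\sum_k u^{\lambda *}_{ki} u^\lambda_{ki} = 1$ translates to the operator identity $\sum_k \pi(u^\lambda_{ki})^* \pi(u^\lambda_{ki}) = {\rm id}$ on $V(0,\mu)$, which forces $\|\pi(u^\lambda_{ki})\| \leq 1$. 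Expanding $\chi_q(\lambda) = \sum_i q^{-2(\rho,\omega_i)} u^\lambda_{ii}$, applying the triangle inequality, and using ${\rm Tr}_\lambda(K_{-2\rho}) = {\rm Tr}_\lambda(K_{2\rho})$ (a consequence of $W$-invariance of the weights of $V(\lambda)$ together with $w_0\rho = -\rho$), one obtains $\|\chi_q(\lambda) v_0\| \leq {\rm Tr}_\lambda(K_{2\rho}) \|v_0\|$. Combined with
\[
|{\rm Tr}_\lambda(K_\mu)| \|v_0\|^2 = |\langle \chi_q(\lambda) v_0, v_0 \rangle| \leq \|\chi_q(\lambda) v_0\| \|v_0\|,
\]
this establishes the first inequality.

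For the ``in particular'' part, suppose $\Re\mu - 2\rho$ is nonzero and dominant. Writing $\Re\mu = (\Re\mu - 2\rho) + 2\rho$ and noting that $2\rho$ lies in the interior of the dominant cone, $\Re\mu$ is strictly dominant, so $(\Re\mu,\nu) > 0$ for every $\nu \in P_+ \setminus \{0\}$. If $V(0,\mu+2\rho)$ were unitarizable, then applying the first part to $\mu' = \mu+2\rho$ would yield $|{\rm Tr}_\lambda(K_{\mu+2\rho})| \leq {\rm Tr}_\lambda(K_{2\rho})$ for every $\lambda \in P_+$. Taking $\lambda = N\lambda_0$ for a fixed regular dominant $\lambda_0$ (say $\lambda_0 = \rho$) and applying the Weyl character formula, both traces are alternating sums over $W$ in which the $w = w_0$ summand dominates all others by a factor exponential in $N$. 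Hence the ratio of moduli behaves like $q^{-N(\Re\mu,\,-w_0\lambda_0)}$ up to a bounded factor, and since $-w_0\lambda_0 \in P_+ \setminus \{0\}$ and $0 < q < 1$, it tends to $\infty$, contradicting the bound.

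The main obstacle is precisely this last asymptotic step. The algebraic parts of the proof (the identity $\chi_q(\lambda)v_0 = {\rm Tr}_\lambda(K_\mu)v_0$ and the operator bound coming from unitarity of $U^\lambda$) are essentially forced on us. The genuine work is in the Weyl character formula estimate: one must verify that the alternating signs do not conspire to cancel enough of the $w_0$-term, and that the same leading-term asymptotics hold for both numerator and denominator. Strict dominance of $\Re(\mu+2\rho)$ and of $2\rho$ is exactly what provides the required exponential-in-$N$ separation between the exponents $(w(N\lambda_0+\rho),\xi)$ for $w = w_0$ versus $w \neq w_0$, so the leading term genuinely controls the asymptotics.
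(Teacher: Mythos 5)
Your proof is correct and follows essentially the same approach as the paper: compute the eigenvalue of $\chi_q(\lambda)$ on the spherical vector, bound $\|\chi_q(\lambda)\|$ via unitarity of the corepresentation matrix, and derive the nonunitarizability from asymptotics in $\lambda$. You supply two details the paper leaves implicit --- the norm bound $\|\chi_q(\lambda)\| \leq {\rm Tr}_\lambda(K_{2\rho})$ via the triangle inequality on $\chi_q(\lambda) = \sum_i q^{-2(\rho,\omega_i)} u^\lambda_{ii}$ together with $W$-symmetry of the weights, and the Weyl character asymptotics along $\lambda = N\rho$ for the ``in particular'' clause --- and both are sound. (You also quietly correct a sign typo in the paper's displayed intermediate step: the eigenvalue is $K_{\mu+2\rho}(\chi_q(\lambda))$, consistent with the proof of Proposition~\ref{thm HC}, not $K_{\mu-2\rho}(\chi_q(\lambda))$; the end result ${\rm Tr}_\lambda(K_\mu)$ is stated correctly in the paper.)
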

	\begin{proof}
	Recall that for $v \in V(0,\mu)^0$,
	\[\chi_q(\lambda) v = K_{\mu-2\rho}(\chi_q(\lambda)) v = {\rm Tr}_\lambda(K_\mu) v.\]
	Suppose $V(0,\mu)$ is unitarizable. Since $\| \chi_q(\lambda) \| \leq {\rm Tr}_\lambda(K_{2\rho})$,
	\[|K_{\mu-2\rho}(\chi_q(\lambda))| \leq {\rm Tr}_\lambda (K_{2\rho}).\]
	\end{proof}

	In the rank $1$ case, we have the following expression for the intertwining operator $T \in \Hom_D(V(0,\nu),V(0,-\nu))$.
	\begin{prop}
	Let $\fg = \frsl_2$. Fix $\nu \in {\mathbb C}/2\pi i \log(q)^{-1} {\mathbb Z}$.
	Suppose $\nu \not \in ({\mathbb Z} \setminus \{0\} ) + \pi i \log(q)^{-1}{\mathbb Z}$.
	Put
	\[T^s = - \prod_{0 \leq r \leq s} \frac{q^{r-\nu} - q^{-r+\nu}}{q^{r+\nu} -q^{-r-\nu}}\]
	and define $T \in \End(L)$ by
	\[T\omega := T^s \omega\]
	for $\omega \in L^s$.
	Then with respect to the identification $L \simeq V(0,\nu) \simeq V(0,-\nu)$,
	\[T \in \Hom_D(V(0,\nu),V(0,-\nu)).\]
	\end{prop}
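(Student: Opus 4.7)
My plan is to bypass a direct verification of the algebra intertwining relation and instead appeal to a Schur-type uniqueness argument, pinning down the scalars by which the resulting intertwiner acts on each $U_q(\fg)$-isotypical component. Under the stated hypothesis, Proposition \ref{thm rank}(iv) guarantees that $L(0,\nu)$ and $L(0,-\nu)$ are irreducible, so they coincide with $V(0,\nu)$ and $V(0,-\nu)$ respectively. Since the Weyl group of $\frsl_2$ acts on $X$ by $\nu \mapsto -\nu$, Corollary \ref{thm X/W} gives $V(0,\nu) \simeq V(0,-\nu)$ as $D$-modules, and Schur's lemma produces a $D$-module isomorphism $\tilde T: L(0,\nu) \to L(0,-\nu)$, unique up to scalar. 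Normalize it by $\tilde T \Lambda(\varphi) = \Lambda(\varphi)$. Because $\tilde T$ is $U_q(\fg)$-equivariant and each $L^s \simeq V(s)$ is an irreducible $U_q(\fg)$-module, $\tilde T|_{L^s}$ must be multiplication by some scalar $\tilde T^s$, and the task reduces to proving $\tilde T^s = T^s$.

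To identify $\tilde T^\lambda$, I apply $\tilde T$ to $b_\lambda \Lambda(\varphi)$, where $b_\lambda := b_{11}$ is the single element provided by Section \ref{sec-ad} (for $\frsl_2$ one has $m = \dim V(\lambda)_0 = 1$). The computation in the paragraph preceding Proposition \ref{thm rank} specializes to
\[ \pi^\nu(b_\lambda) \Lambda(\varphi) = K_{\nu+2\rho}(b_\lambda)\,\Lambda(\varphi \triangleleft S(c^\lambda_{11})), \]
and the target vector lies in $L^\lambda$ because $\varphi \triangleleft S(c^\lambda_{11})$ is supported on the $\End(V(\lambda))$-summand of $c_c(\dG_q)$. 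Applying $\tilde T$ to both sides and using $D$-linearity together with the normalization yields
\[ \tilde T^\lambda\, K_{\nu+2\rho}(b_\lambda) = K_{-\nu+2\rho}(b_\lambda), \]
so $\tilde T^\lambda = \cP_\lambda(\nu/2 - \rho)/\cP_\lambda(-\nu/2 - \rho)$ via the identity $K_{\pm\nu+2\rho}(b_\lambda) = \cP_\lambda(\mp\nu/2 - \rho)$ derived in Section \ref{sec-ad}. The genericity hypothesis on $\nu$, combined with part (ii) of Corollary \ref{thm PRV}, ensures that both $\cP_\lambda$-values are nonzero.

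The final step is an algebraic reconciliation with the defining product formula for $T^\lambda$. Joseph's quantum PRV determinant specialized to $\frsl_2$ (with $\rho = 1/2$ and $\dim V(\lambda)_{n\alpha} = 1$ for $1 \leq n \leq \lambda$) gives, up to a nonzero constant,
\[ \cP_\lambda(\mu) = \prod_{n=1}^\lambda \bigl( K_\alpha(\mu) - q^{2(n-1)} K_{-\alpha}(\mu) \bigr). \]
Substituting $\mu = \pm\nu/2 - \rho$, so that $K_\alpha(\mu) = q^{\pm\nu - 1}$, and factoring $-q^{n-1}$ out of each term in numerator and denominator, the constants cancel in the ratio and we obtain
\[ \tilde T^\lambda = \prod_{n=1}^\lambda \frac{q^{n-\nu} - q^{-n+\nu}}{q^{n+\nu} - q^{-n-\nu}}, \]
which matches the definition of $T^\lambda$ once the $r = 0$ factor of the stated product is recognized as $-1$, canceling the outer minus sign. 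I expect the main subtlety, apart from this bookkeeping, to be the assertion that $\Lambda(\varphi \triangleleft S(c^\lambda_{11}))$ lies in the single isotypical component $L^\lambda$, as this is what lets $\tilde T$ act on it by the single scalar $\tilde T^\lambda$; this in turn follows from the $\End(V(\lambda))$-support statement noted above.
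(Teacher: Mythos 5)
Your proposal is correct, and it reaches the product formula by a genuinely different computation from the paper's. Both arguments begin the same way: Schur's lemma (via Proposition \ref{thm HC} and the multiplicity-one property $[L(0,\nu):V(s)]=1$) produces a unique normalized intertwiner, reducing the problem to identifying the scalar $T^s$ on each isotypical component. From there the paper proceeds by an explicit, hands-on recursion inside $\cO(SU_q(2))$: it checks that $\varphi \triangleleft (c^r a^r)$ spans $L^r$, computes $\pi^\nu(c)\Lambda(\varphi\triangleleft(c^ra^r))$ directly from the $qba$-type commutation relations of the generators $a,b,c,d$, and derives the ratio $T_{r+1}/T_r$ to be iterated. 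You instead reuse the abstract machinery of Section \ref{sec-ad}: apply $\tilde T$ to $b_\lambda\Lambda(\varphi)$ and invoke the already-established identity $K_{\pm\nu+2\rho}(b_\lambda)=\cP_\lambda(\mp\nu/2-\rho)$ to express $\tilde T^\lambda$ as the ratio of PRV determinants $\cP_\lambda(\nu/2-\rho)/\cP_\lambda(-\nu/2-\rho)$, then evaluate Joseph's closed formula for $\frsl_2$ and simplify. The generator-level check does not occur; in exchange, your route presupposes the material around Proposition \ref{thm rank}, including that $K_{\nu+2\rho}(b_\lambda)\neq 0$ under the stated genericity (which, as you correctly note, follows from Corollary \ref{thm PRV}(ii): the hypothesis $\nu\notin({\mathbb Z}\setminus\{0\})+\pi i\log(q)^{-1}{\mathbb Z}$ is precisely the union of the two conditions needed for $\cP_\lambda(\pm\nu/2-\rho)$ to be invertible). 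One cosmetic point: your justification that $\Lambda(\varphi\triangleleft S(c^\lambda_{11}))\in L^\lambda$ via support on the $\End(V(\lambda))$-summand is a slightly roundabout phrasing of what the text already gives — $b_\lambda$ lies in the $\ad^S$-$\lambda$-isotypical component of $\cO(G_q)$ and hence transports the spherical vector into $L^\lambda$, after which the nonvanishing of $K_{\nu+2\rho}(b_\lambda)$ lets you cancel it from both sides. Your final normalization bookkeeping (the $r=0$ factor equal to $-1$ absorbing the sign) is also correct. Overall this is a valid, somewhat slicker alternative that trades the elementary computation for earlier structure theory.
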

	\begin{proof}
	From Proposition \ref{thm HC}, there exists an intertwining operator $T \in \Hom_D(V(0,\nu),V(0,-\nu))$.
	For $s \in {\mathbb Z}_{\geq 0}$, since $[V(0,\nu):V(s)] = 1$, $T$ is a scalar on $V(0,\nu)^s$. Let $T^s := T|_{V(0,\nu)^s}$.
	Multiplying a scalar if necessary, we may assume $T_0 = 1$.

	The Clebsch-Gordan rule asserts $c^r a^r$ is in $\bigoplus_{0 \leq k \leq r} \cO(SU_q(2))^k$, hence $\varphi \triangleleft (c^r a^r) \in \bigoplus_{0 \leq k \leq r} L^k$. Moreover since $\varphi \triangleleft (c^r a^r) \in L$ is of weight $r$, $\varphi \triangleleft (c^r a^r) \in L^r$.
	Moreover since $\varphi$ is faithful on $\cO(SU_q(2))$, $\varphi \triangleleft(c^r a^r)$ is nonzero.
	We have
	\begin{align*}
	\pi^\nu(c) \Lambda(\varphi \triangleleft (c^r a^r)) & = q^{-1+\nu} \Lambda(a \triangleright \varphi \triangleleft (c^r a^r S(c))) + q^{1-\nu} \Lambda(c \triangleright \varphi \triangleleft (c^r a^r S(d))) \\
	& = - q^{\nu} \Lambda(a \triangleright \varphi \triangleleft (c^r a^r c)) + q^{1-\nu} \Lambda(c \triangleright \varphi \triangleleft(c^r a^{r+1})) \\
	& = q^{r+1}(- q^{r+1+\nu} + q^{-r-1-\nu}) \Lambda(\varphi \triangleleft (c^{r+1} a^{r+1})).
	\end{align*}
	Hence
	\begin{align*}
	&T_{r+1} q^{r+1}(- q^{r+1+\nu} + q^{-r-1-\nu}) \Lambda(\varphi \triangleleft (c^{r+1} a^{r+1})) \\
	& = T \pi^\nu(c) \Lambda(\varphi \triangleleft (c^r a^r)) \\
	& = \pi^{-\nu}(c) T \Lambda(\varphi \triangleleft (c^r a^r)) \\
	& = T_r q^{r+1}(-q^{r+1-\nu} + q^{-r-1+\nu}) \Lambda(\varphi \triangleleft (c^r a^r)).
	\end{align*}
	Therefore
	\[\frac{T_{r+1}}{T_r} = \frac{q^{r+1-\nu} - q^{-r-1+\nu}}{q^{r+1+\nu}-q^{-r-1-\nu}}.\]
	Iterating use of this formula shows the conclusion.
	\end{proof}

	One can classify all irreducible unitary spherical representations of the Drinfeld double of $SU_q(2)$ as follows, which has already appeared in \cite{Pus}.
	\begin{cor}
	Let $\fg = \frsl_2$. Then for $\nu \in {\mathbb C}/2 \pi i \log(q)^{-1} \mathbb Z$, $V(0,\nu)$ is unitarizable if and only if $\nu \in i {\mathbb R}$ or $\nu \in {\mathbb R} + \pi i \log(q)^{-1} {\mathbb Z}$ such that $|\Re\nu| \leq 1$.
	\end{cor}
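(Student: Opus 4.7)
The plan is to apply Corollary \ref{thm sesqui-form} first, which restricts attention to the two loci where a nonzero invariant sesquilinear form can possibly exist: since $W=\{1,-1\}$ acts on $X=\mathbb{C}/2\pi i\log(q)^{-1}\mathbb{Z}$ by negation, the condition $-\overline\nu\in W\nu$ amounts to either $\nu\in i\mathbb{R}$ (principal series) or $\nu\in\mathbb{R}+\pi i\log(q)^{-1}\mathbb{Z}$ (real-axis case). Outside these two loci $V(0,\nu)$ admits no nonzero invariant sesquilinear form and so cannot be unitarizable.

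For the principal series $\nu\in i\mathbb{R}$ the hypothesis $\nu\notin(\mathbb{Z}\setminus\{0\})+\pi i\log(q)^{-1}\mathbb{Z}$ is automatic, so by the earlier corollary on $\frsl_2$ one has $V(0,\nu)=L(0,\nu)$. The invariant pairing $L(0,\nu)\times L(0,-\overline\nu)\to\mathbb{C}$ of Section 4 (taken with $\Sigma=\emptyset$) reads $(\Lambda(x),\Lambda(y))=\dphi(y^*x)$; since $-\overline\nu=\nu$ it is actually a sesquilinear form on $V(0,\nu)$, and positivity of $\dphi$ on $c_c(\dG_q)$ makes it a positive definite inner product. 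Uniqueness (Lemma \ref{thm Schur}) shows this is the unitary structure.

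For $\nu\in\mathbb{R}+\pi i\log(q)^{-1}\mathbb{Z}$ with $\nu\notin\mathbb{Z}+\pi i\log(q)^{-1}\mathbb{Z}$, one still has $V(0,\nu)=L(0,\nu)$, and the intertwiner $T:L(0,\nu)\to L(0,-\nu)$ from the preceding proposition acts on $L^s$ as the scalar $T^s$. Composing $T$ with the $\dphi$-pairing $L(0,\nu)\times L(0,-\overline\nu)\to\mathbb{C}$ (using $-\overline\nu\equiv -\nu$ in $X$) gives the invariant sesquilinear form on $V(0,\nu)$, which on $L^s$ is $T^s$ times a positive definite form; unitarizability is equivalent to $T^s$ having constant sign in $s$. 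Writing $\nu=t+\pi i\log(q)^{-1}k$, the phase $(-1)^k$ cancels between the numerator $q^{r-\nu}-q^{-r+\nu}$ and the denominator $q^{r+\nu}-q^{-r-\nu}$ of each factor, so the sign of $T^s$ depends only on $t\in\mathbb{R}$. For $r\geq 1$ and $|t|<1$ one has $r>|t|$, so both numerator and denominator of the $r$-th factor are negative (using $0<q<1$), making each factor positive; hence $T^s>0$ for every $s$ and $V(0,\nu)$ is unitarizable. For $|t|>1$ non-integer (WLOG $t>1$ after a Weyl reflection), the $r=1$ factor has positive numerator and negative denominator, so $T^1<0$ while $T^0=1$, making the form indefinite.

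It remains to deal with $\nu\in\mathbb{Z}+\pi i\log(q)^{-1}\mathbb{Z}$, where Corollary \ref{thm integral} identifies $V(0,\nu)$ with a finite-dimensional $V(\mu)\otimes V(\mu)^*$. The boundary $|\Re\nu|=1$ corresponds to $\mu=0$ and the trivial representation, which is unitarizable. For $|\Re\nu|\geq 2$ I would apply Lemma \ref{thm fd unitary}, choosing a dominant $\lambda$ large enough that $|{\rm Tr}_\lambda(K_\nu)|>{\rm Tr}_\lambda(K_{2\rho})$; this is possible because the dominant exponential in ${\rm Tr}_\lambda(K_\nu)$ has growth rate governed by $|\Re\nu|$ whereas the right-hand side is governed by $1$, so the inequality is eventually violated whenever $|\Re\nu|>1$. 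The main obstacle is the sign bookkeeping in the product formula for $T^s$, in particular confirming that the $(-1)^k$ twist factors out cleanly and that the singular integral points where the formula has zero or pole factors are correctly interpreted; these exceptional cases are resolved precisely by the combined use of Corollary \ref{thm integral} and Lemma \ref{thm fd unitary}.
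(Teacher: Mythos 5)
Your proposal is correct and follows essentially the same route as the paper's own proof: it invokes Corollary \ref{thm sesqui-form} to reduce to $\nu\in i\mathbb{R}$ or $\nu\in\mathbb{R}+\pi i\log(q)^{-1}\mathbb{Z}$, then identifies the invariant form with $(Tx,y)$ in the $\dphi$-inner product and reads off positivity from the explicit product formula for $T^s$ (your sign analysis, including the $(-1)^k$ cancellation and the observation that the $r=1$ factor goes negative once $|\Re\nu|>1$, reproduces the paper's criterion $|\Re\nu|<1$), and finally handles the integral locus via Corollary \ref{thm integral} and Lemma \ref{thm fd unitary}. The only place you are slightly more explicit than the paper is in arguing the failure of the trace inequality for $|\Re\nu|\geq 2$ by comparing the growth rates $q^{-\lambda|\Re\nu|}$ versus $q^{-\lambda}$; the paper simply cites the lemma, so this is a welcome expansion rather than a discrepancy.
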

	\begin{proof}
	In the case of $\nu \in ({\mathbb Z} \setminus \{0\}) + \pi i \log(q)^{-1} {\mathbb Z}$, Lemma \ref{thm fd unitary} asserts that if $V(0,\nu)$ is unitarizable, then $|\Re\nu| = 1$. Conversely if $|\Re \nu| = 1$, $V(0,\nu)$ is $1$-dimensional and unitary.

	Let $\nu \not \in ({\mathbb Z} \setminus \{0\}) + \pi i \log(q)^{-1} {\mathbb Z}$.
	Then $V(0,\nu) = L(0,\nu)$.
	From Corollary \ref{thm sesqui-form}, $V(0,\nu)$ is not unitarizable unless there exists $w \in W$ such that $-\overline{\nu} = w\nu$.
	Suppose $\nu$ satisfies the condition.
	Take $T \in \Hom_D(V(0,\nu),V(0,w\nu))$ such that $T^0 = 1$.
	Then the invariant sesquilinear form is given by
	\[(x,y)_0 := (Tx,y),\]
	where the inner product on the left hand side is the canonical one on $L$ given by
	\[(x,y) = \dphi(y^* x).\]
	Since $(\varphi,\varphi)_0 = 1$, $V(0,\nu)$ is unitarizable if and only if $T$ is positive definite on $L$.

	In the case of $w = 1$ (that is, $\nu \in i{\mathbb R}$), $T = 1$. Hence this is a priori positive definite.

	In the case of $w = -1$ (that is, $\nu \in {\mathbb R} + \pi i \log(q)^{-1} {\mathbb Z}$), $T$ is positive definite if and only if
	\[-\prod_{0 \leq r \leq n}\frac{q^{r+\nu} - q^{-r-\nu}}{q^{r-\nu} - q^{-r+\nu}} \geq 0\]
	for any $n$. This holds if and only if $|\Re \nu| < 1$.
	\end{proof}
	Finally, we get a classification of the unitary spherical irreducible representations of $D$ for $\fg = \frsl_3$.
	\begin{thm}\label{thm sl3}
	Let $\fg = \frsl_3$ and take $\nu \in X$.
	Then $V(0,\nu)$ is unitarizable if and only if
	\begin{enumerate}[{\rm (i)}]
	\item $\nu \in i \fh_{\mathbb R}$,
	\item $\nu \in W(t \alpha + i \mu)$ for $t \in [-1,1]$, $(\mu,\alpha) \in 2\pi \log(q)^{-1} {\mathbb Z}$ or
	\item $\nu \in W(2\rho + 2\pi i \log(q)^{-1} P)$.
	\end{enumerate}
	\end{thm}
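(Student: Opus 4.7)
The plan combines the necessary condition for an invariant sesquilinear pairing with a case analysis on $w\in W=S_3$ and the parabolic induction machinery from the previous sections.

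First, by Corollary \ref{thm sesqui-form}, any unitarizable $V(0,\nu)$ must satisfy $-\overline{\nu}=w\nu$ in $X$ for some $w\in W$. Writing $\nu=x+iy$ with $x,y\in\fh^*_{\mathbb R}$, this becomes $wx=-x$ and $(1-w)y\in 2\pi\log(q)^{-1}Q$. I would enumerate the elements $w\in S_3$ to see that $w=e$ forces $x=0$, so $\nu\in i\fh^*_{\mathbb R}$; a $3$-cycle has no real $(-1)$-eigenvector, so again $x=0$; and a reflection $w=s_\beta$ allows $x=t\beta$ with $t\in\mathbb R$ together with $(y,\beta^\vee)\in 2\pi\log(q)^{-1}\mathbb Z$. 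After using the $W$-action one may take $\alpha=\beta$ simple, and after shifting $y$ by an element of $2\pi\log(q)^{-1}P$ (absorbed in the equivalence defining the case (ii) parametrization) one may further assume $(y,\alpha^\vee)=0$ exactly.

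I would then establish unitarity in each case. For (i), $\nu=iy$ is purely imaginary, so $(\nu,\alpha^\vee)\in i\mathbb R$ is never in $(2\mathbb Z\setminus\{0\})+2\daZ$; Proposition \ref{thm rank}(iv) gives $V(0,iy)=L(0,iy)$, and the canonical sesquilinear pairing $(\Lambda a,\Lambda b)=\dphi(b^*a)$ on $L$ is $D$-invariant for $\pi^\nu$ (since $-\overline{\nu}=\nu$ collapses the two twisted actions) and positive definite, so $V(0,iy)$ is unitary. For (ii), after the above normalization I would check the hypothesis of Proposition \ref{thm irrep ind} for $\Sigma=\{\alpha\}$: for each root $\beta\notin\mathrm{span}\,\Sigma$ one has $(\alpha,\beta^\vee)\in\{\pm 1\}$ since $A_2$ is simply laced, so $t(\alpha,\beta^\vee)\in 2\mathbb Z$ would require $t$ to be an even integer, which never occurs for $t\in[-1,1]\setminus\{0\}$. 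Hence $V(0,\nu)=\mathrm{Ind}_\Sigma^\Pi(V(0,t\alpha),iy)$ with $iy$ imaginary, and Corollary \ref{thm ind form} together with the rank-one classification for $\frsl_2$ yields unitarity precisely for $|t|\leq 1$. For (iii), the $W$-action brings $\nu$ to $-2\rho+2\pi i\log(q)^{-1}\lambda'$; setting $\mu=-\pi i\log(q)^{-1}\lambda'\in\pi i\log(q)^{-1}P$ makes $V(\mu)$ one-dimensional, and Corollary \ref{thm integral} identifies $V(0,\nu)\simeq V(\mu)\otimes V(\mu)^*$, which is one-dimensional and thus unitary.

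For the converse, the remaining candidates are $\nu=t\alpha+iy$ with the reflection constraint, $|t|>1$, and $\nu$ not in the $W$-orbit of case (iii). When $t$ is not an even integer, the hypothesis of Proposition \ref{thm irrep ind} still holds, so Corollary \ref{thm ind form} transfers the rank-one non-unitarity of $V(0,t\alpha)$ to $V(0,\nu)$. When $t\in 2\mathbb Z$, the hypothesis fails exactly when $y\in 2\pi\log(q)^{-1}P$; since $\rho=\alpha_1+\alpha_2$ is itself a root in $A_2$, the $W$-orbit of $2\alpha$ contains $2\rho$, so the locus $\{t=\pm 2,\ y\in 2\pi\log(q)^{-1}P\}$ coincides modulo $W$ with case (iii). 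For the remaining $|t|\geq 4$ with $y\in 2\pi\log(q)^{-1}P$, Lemma \ref{thm fd unitary} provides the obstruction by computing $|\mathrm{Tr}_\lambda(K_\nu)|$ at a suitable fundamental weight and exhibiting that it exceeds $\mathrm{Tr}_\lambda(K_{2\rho})$.

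The main obstacle is the bookkeeping at the even-integer values of $t$: checking that at $|t|=2$ the lattice locus $y\in 2\pi\log(q)^{-1}P$ precisely exhausts the $W$-orbit of case (iii), and that Lemma \ref{thm fd unitary} does produce a genuine obstruction for $|t|\geq 4$. The simply-laced structure of $A_2$ makes the generic verification of Proposition \ref{thm irrep ind} routine, but the integrality analysis at $t\in 2\mathbb Z$ requires a careful comparison of the $W$-orbit of $t\alpha$ with the lattice $2\pi\log(q)^{-1}P$.
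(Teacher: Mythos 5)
Your plan follows essentially the same route as the paper's proof: start from the necessary condition $-\overline{\nu}=w\nu$ of Corollary \ref{thm sesqui-form}, enumerate $w\in S_3$ by conjugacy type (identity, reflection, three-cycle), handle the identity case by the canonical pairing $\dphi(y^*x)$, reduce the reflection case to $\frsl_2$ via Proposition \ref{thm irrep ind} and Corollary \ref{thm ind form}, and dispose of the residual finite-dimensional locus with Lemma \ref{thm fd unitary} and Corollary \ref{thm integral}. The observation that the three-cycle has no real $(-1)$-eigenvector (hence collapses into case (i)) and that $\rho=\alpha_1+\alpha_2$ is itself a root (so $W(2\alpha)\ni 2\rho$) are both correct and are exactly what the paper uses. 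The converse direction, splitting into $t\notin 2\mathbb Z$ (induction argument) versus $t\in 2\mathbb Z$ with $y\in 2\pi\log(q)^{-1}P$ (finite-dimensional, Lemma \ref{thm fd unitary}), is the paper's case analysis.

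One imprecision worth fixing: you reduce to $(y,\alpha^\vee)=0$ ``after shifting $y$ by an element of $2\pi\log(q)^{-1}P$.'' But $X=\fh^*/2\pi i\log(q)^{-1}Q$, so a shift by a genuine element of $P\setminus Q$ changes the class of $\nu$ in $X$, hence the representation $V(0,\nu)$, and there is no ``equivalence'' in the statement of Theorem \ref{thm sl3} that absorbs it — only the $W$-action. The reduction is nevertheless available, because $(Q,\alpha^\vee)=\mathbb Z$ (already $\{(\alpha_1,\alpha_1^\vee),(\alpha_2,\alpha_1^\vee)\}=\{2,-1\}$ generates $\mathbb Z$), so one can kill $(y,\alpha^\vee)\in 2\pi\log(q)^{-1}\mathbb Z$ by a $Q$-shift, which is trivial in $X$. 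Alternatively, follow the paper and keep $\nu^\Sigma=(t+i(\mu,\alpha)/2)\alpha$; the rank-one classification is uniform in whether $(\mu,\alpha)/2$ is an even or odd multiple of $\pi\log(q)^{-1}$, so the conclusion $|t|\le 1$ is unaffected either way. You should also spell out that, when checking the hypothesis of Proposition \ref{thm irrep ind}, both the real part $t(\alpha,\beta^\vee)\notin 2\mathbb Z$ \emph{and} the possibility that only the imaginary part obstructs membership in $2\mathbb Z+2\daZ$ are relevant; for $t\notin 2\mathbb Z$ the real part alone already suffices, which is what you implicitly use.
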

	\begin{proof}
	Put $\Pi = \{\alpha,\beta\}$.

	Take $\nu \in X$. If there is no $w \in W$ such that $-\overline{\nu} = w \nu$, $V(0,\nu)$ is not unitarizable.

	If $-\overline{\nu}= w \nu$ for some $w \in W$, we can divide the case into the following.

	\begin{enumerate}[(1)]
	\item Suppose $w = e$, that is, $\nu \in i \fh_{\mathbb R}$. In this case, the intertwining operator is just identity, hence positive definite, so $V(0,\nu)$ is unitarizable.

	\item Suppose $w = s_\alpha$, that is, $\nu = t \alpha + i \mu$ for $t \in {\mathbb R}$ and $(\mu,\alpha) \in 2 \pi \log(q)^{-1} {\mathbb Z}$.

	First suppose $\mu \not \in 2 \pi \log(q)^{-1} P$ or $t \not \in 2{\mathbb Z} \setminus \{0\}$. Then
	from Proposition \ref{thm irrep ind},
	\[V(0,\nu) = {\rm Ind}_\alpha^\Pi (V(0,t+i(\mu,\alpha)\alpha/2),i(\mu-(\mu,\alpha)\alpha/2)).\]
	In this case, Corollary \ref{thm ind form} implies $V(0,\nu)$ is unitarizable if and only if $V(0,t+i(\mu,\alpha)/2)$ is. This holds if and only if $|t| \leq 1$.

	Suppose $\mu \in 2 \pi \log(q)^{-1} P$ and $t \in 2 {\mathbb Z} \setminus \{0\}$.
	In this case, $V(0,\nu)$ is finite dimensional.
	Thanks to Lemma \ref{thm fd unitary}, $V(0,\nu)$ is not unitarizable unless $\nu \in W(2 \alpha + 2\pi i\log(q)^{-1} P) = W (2\rho + 2\pi i \log(q)^{-1} P)$.
	On the other hand, if $\nu \in W(2\rho+2\pi i \log(q)^{-1}P)$, $V(0,\nu)$ is $1$-dimensional and unitary.
	\item For case of $w = s_\beta$ or $s_\alpha s_\beta s_\alpha$, there exists $w' \in W$ such that $w' \nu$ is in the case (2). Since $V(0,\nu)$ is isomorphic to $V(0,w' \nu)$, we already have done.

	\item In the case of $w = s_\alpha s_\beta$ or $s_\beta s_\alpha$, there is no $\nu \in X$ satisfying $-\overline{\nu} = w \nu$ except for $\nu \in 2 \pi i \log(q)^{-1} P$, which is already considered in the case (1).
	\end{enumerate}
	Combining these cases, we get the desired conclusion.
	\end{proof}
	We conclude the content of this paper with another application of the whole theory.
	\begin{thm}
	The Drinfeld double of $SU_q(2n+1)$ has property {\rm (T)} for any $n \in {\mathbb Z}_+$. 
	\end{thm}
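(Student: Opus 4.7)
The plan is to reduce property (T) to an isolation statement in the spherical unitary dual, verify it for $SU_q(3)$ via Main Theorem~1, and bootstrap to $SU_q(2n+1)$ through embeddings of the $A_2$ root subsystem. Recall that property (T) for a discrete quantum group amounts to the trivial representation being isolated in the unitary dual under the Fell topology. Given a unitary representation $(\pi,H)$ of $D := U_q(\fg) \bowtie \cO(G_q)$ (with $\fg = \frsl_{2n+1}$) carrying a sequence of almost invariant unit vectors $\xi_n$, these are in particular almost $U_q(\fg)$-invariant, so their projections to $H^0 := \pi(p^0) H$ remain almost invariant under $\pi$. Since $H^0$ is a module over the commutative algebra $\varphi D_c \varphi \simeq \cO(\CqGq)$ with spectrum $X/W$ by Lemma~\ref{thm CqGq}, spectral disintegration forces the spectral measures to concentrate near the character corresponding to the trivial representation, which by Corollary~\ref{thm integral} is $\nu = 2\rho$. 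Property~(T) therefore reduces to showing that $\nu = 2\rho$ is isolated within the set of parameters of irreducible admissible unitarizable spherical $D$-modules.

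For $n=1$, Main Theorem~1 makes this parameter set explicit: the purely imaginary parameters, the complementary strip $\{(t+is,-t+is,-2is) : t \in [-1,1], s \in \mathbb{R}\}$, and the discrete set $2\rho + 2\pi i \log(q)^{-1} P$ modulo $W$. The Weyl orbit of $2\rho$ has real part of sup-norm $2$, while the real parts in the complementary strip have sup-norm at most $1$ and the principal series have real part zero; the remaining discrete characters are automatically isolated in $X/W$ since $P/Q$ is finite. Hence $\nu=2\rho$ is an isolated point and $D(SU_q(3))$ has property (T).

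For general $n$, pick any pair of adjacent simple roots $\{\alpha_i,\alpha_{i+1}\}$ of $\frsl_{2n+1}$; they span an $A_2$ subsystem, yielding an embedding of Drinfeld doubles $D(SU_q(3)) \hookrightarrow D(SU_q(2n+1))$ compatible with the $*$-structures. Restricting a unitary representation of $D(SU_q(2n+1))$ with almost invariant vectors to this subdouble, the $n=1$ case produces a non-zero vector invariant under the restriction. Running over all adjacent pairs and intersecting the corresponding fixed subspaces yields a subspace invariant under the subalgebra generated by these $A_2$ subdoubles; one then argues that this subalgebra is dense (in an appropriate algebraic sense) in $D(SU_q(2n+1))$, producing a genuine $D$-fixed vector.

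The main obstacle is this last assembly step. In the classical setting the fact that $SL_{2n+1}$ is generated by its rank-two subgroups is elementary, but in the quantum setting the analogous density statement for the Drinfeld double must carefully account for the braided commutation relations between $U_q(\fg)$ and $\cO(G_q)$, as well as the distinction between the "compact" and "function-algebra" halves of $D$. Here the Kr\"ahmer embedding underlying Theorem~\ref{thm findim} and the induction-by-stages machinery from Section~\ref{sec non-min} should supply the needed algebraic control, with the restriction to odd dimensions $2n+1$ plausibly reflecting how the $A_2$ pieces assemble into the full root system.
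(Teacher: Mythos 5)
There is a genuine gap, and it sits exactly where you flag it, but the problem is worse than a missing density argument: the ``embedding of Drinfeld doubles $D(SU_q(3)) \hookrightarrow D(SU_q(2n+1))$'' does not exist. While $U_{q}(\frsl_3)$ is a Hopf $*$-subalgebra of $U_q(\frsl_{2n+1})$ generated by two adjacent $E_\alpha, F_\alpha, K_\lambda$, the function algebra $\cO(SU_q(3))$ sits on the \emph{dual} side, so the inclusion dualizes to a \emph{surjection} $\pi^\Sigma : \cO(SU_q(2n+1)) \to \cO(SU_q(3))$; it is not a subalgebra. Consequently the only naturally occurring ``parabolic'' piece inside $D = U_q(\fg) \bowtie \cO(G_q)$ attached to an $A_2$ subsystem is $B_\Sigma = (U_{q^\Sigma}(\fg^\Sigma) U_q(\fh)) \bowtie \cO(G_q)$, which is not the double of $SU_q(3)$ (and is not the Drinfeld double of any compact quantum group). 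So ``restrict to the subdouble and apply the $n=1$ case'' has no meaning, and the bootstrap cannot even start. The subsequent steps (intersecting fixed spaces, density of the algebra generated by $A_2$ pieces) are likewise unsupported, and your guess that the odd-rank restriction comes from how $A_2$ pieces tile the root system misdiagnoses the real reason.

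The paper does not reduce to $SU_q(3)$ at all; it runs a direct analysis on $\frsl_{2n+1}$. After the reduction you correctly identify (property (T) $\Leftrightarrow$ isolation of $2\rho$ among unitarizable spherical parameters in $X/W$, via the Fell topology on $X/W$), the paper takes $\nu$ close to but distinct from $2\rho$ and shows it cannot be unitarizable. A closeness condition on the coordinates forces $\nu_{-k} = -\overline{\nu_k}$ from $-\overline\nu \in W\nu$. One then defines the subset $p$ of indices where a rank-one integrality condition holds, and the key claim — this is precisely where the rank being odd is used — is that $p$ is proper: the middle coordinate $\nu_0$ must be purely imaginary, and if $p$ were everything, the closeness condition would force $\nu = 2\rho$. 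Choosing $\Sigma = \Sigma_1 \amalg \Sigma_2$ accordingly, Proposition~\ref{thm irrep ind} and Corollary~\ref{thm ind form} exhibit $V(0,\nu)$ as parabolically induced from $V(0,\nu^{\Sigma_1}) \otimes V(0,\nu^{\Sigma_2})$ with imaginary $\nu^{\perp\Sigma}$, so unitarizability would descend to $V(0,\nu^{\Sigma_1})$; this is finally ruled out by the trace estimate of Lemma~\ref{thm fd unitary}. None of this structure — parabolic descent plus the quantitative norm obstruction — is present in your proposal, and it cannot be replaced by a naive ``generated by rank-two subdoubles'' argument.
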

	\begin{proof}
	First notice that the restriction of the topology on $X/W$ coincides with the Fell topology on the subset of the unitary dual formed by the spherical irreducible representations. Hence we only need to show $V(0,\nu)$ is not unitarizable for any $\nu \in X$ sufficiently close to $2\rho$.
	
	Let $\fg = \frsl_{2n+1}$. We identify $\fh^*$ with
	\[\{\nu = (\nu_n,\nu_{n-1},\dots,\nu_{-n}) \in {\mathbb C}^{2n+1} \mid \sum_{k=-n}^n \nu_k = 0\}.\]
	Let $(e_k)_{k=n,n-1,\dots,-n}$ be the canonical basis of ${\mathbb C}^{2n+1}$ and fix the set of simple roots by $\alpha_k := e_k - e_{k-1}$ ($k=n,n-1,\dots,-n+1$). 
	Then the element $2\rho$ corresponds to the sequence $(2n,2n-2,\dots,-2n)$.

	For $\nu \in \fh^*$, consider the following two conditions.
	\begin{enumerate}[(1)]
	\item $|\nu_k - 2k|  < {\rm min}\{1, \pi \log(q)^{-1}\}$ for any $k$.
	\item For any proper subset $p \subset \{n,n-1,\dots,-n\}$ which contains $n$ and is closed under $k \mapsto -k$, we have
	\[\sum_{k \in p} |q^{\nu_k} | > \sum_{l = 1}^{\#p} q^{-\#p + 2l - 1}.\]
	\end{enumerate}
	It is easy to observe that $2\rho$ satisfies these conditions. Hence if $\nu \in \fh^*$ is sufficiently close to $2\rho$, then $\nu$ also satisfies the conditions.
	We show $V(0,\nu)$ is not unitarizable for such $2\rho \neq \nu \in \fh^*$.
	
	Suppose $V(0,\nu)$ is unitarizable.
	Since $-\overline{\nu}$ is a permutation of $\nu$ modulo $2 \pi i \log(q)^{-1} Q$, the condition (1) implies
	\[\nu_{-k} = -\overline{\nu_k}.\]
	Let $p \subset \{n,n-1,\dots,-n\}$ be the set of all indices $k$ such that
	\[\nu_k - \nu_n \text{ or } -\overline{\nu_k} - \nu_n \in 2 {\mathbb Z} + 2 \pi i \log(q)^{-1} {\mathbb Z}.\]
	By definition, $k \in p$ if and only if $-k \in p$.

	We claim $p \neq \{-n,-n+1,\dots,n\}$. (For this, we use the assumption that the rank is odd.)
	Suppose $p = \{-n,-n+1,\dots,n\}$.
	Since $\nu_0$ is imaginary, the definition of $p$ and the condition (1) imply
	\[\nu_k - \nu_0 \in 2 {\mathbb Z}\]
	for any $k$.
	Again from the condition (1), $\nu$ is of the form
	\[\nu = (2n+\nu_0, 2n-2+\nu_0, \dots, -2n + \nu_0).\]
	Combining with $\displaystyle \sum_k \nu_k = 0$, we get $\nu_0 = 0$, which contradicts with $\nu \neq 2\rho$.

	Now take a permutation $\tilde \nu$ of $\nu$ with the following property:
	\begin{itemize}
	\item $\tilde \nu_n = \nu_n$,
	\item the set of indices corresponding to $p$ is $\{n,n-1,\dots,l\}$ and
	\item the sequence $\Re\tilde \nu$ is decreasing on $\{n,n-1,\dots,l\}$ and $\{l-1,l-2,\dots,-n\}$.
	\end{itemize}
	Put $\Sigma_1 := \{\alpha_k \mid l+1 \leq k \leq n\}$, $\Sigma_2 = \{\alpha_k \mid -n+1 \leq k \leq l-1\}$ and $\Sigma := \Sigma_1 \amalg \Sigma_2$.
	Since $\Sigma$ satisfies the assumption in Proposition \ref{thm irrep ind},
	\[V(0,\tilde{\nu}) = {\rm Ind}^\Pi_\Sigma (V(0,\tilde{\nu}^\Sigma),\tilde{\nu}^{\perp \Sigma}).\]
	Put $\tilde \nu^i := \tilde{\nu}^{\Sigma_i}$ for $i = 1,2$. Then we have
	\[V(0,\tilde{\nu}^\Sigma) = V(0,\tilde \nu^1) \otimes V(0,\tilde \nu^2)\]
	as $D^\Sigma = D^{\Sigma^1} \otimes D^{\Sigma^2}$-module.

	On the other hand, we compute
	\begin{align*}
	\tilde \nu^1 & = (\tilde \nu_n - t, \tilde \nu_{n-1} - t, \dots, \tilde \nu_l - t, 0, \dots, 0), \\
	\tilde \nu^2 & = (0, \dots, 0, \tilde \nu_{l-1} - s, \tilde \nu_{l-2} - s, \dots, \tilde \nu_{-n} -s) \\
	\tilde \nu^{\perp \Sigma} & = (t,t, \dots, t, s, s, \dots s)
	\end{align*}
	for $\displaystyle t := \frac{1}{n-l+1} \sum_{k=l}^n \tilde \nu_k$ and $\displaystyle s := \frac{1}{l+n} \sum_{k=-n}^{l-1} \tilde \nu_k$.
	Since $\nu_{-k} = -\overline{\nu_k}$ and $p$ is symmetric around zero, the numbers $t$ and $s$ are imaginary.
	Therefore Corollary \ref{thm ind form} implies $V(0,\tilde{\nu}^\Sigma)$ is unitarizable.
	Hence the modules $V(0,\tilde{\nu}^i)$ are also unitarizable.

	To conclude the proof, we will show $V(0,\tilde{\nu}^1)$ is not unitarizable using Lemma \ref{thm fd unitary}. Let $\lambda$ be the fundamental weight of $\fg^\Sigma$.
	Then the inequality
	\[{\rm Tr}_\lambda(K_{\tilde{\nu}^1}) > {\rm Tr}_\lambda(K_{2\rho^{\Sigma_1}})\]
	follows from the condition (2).
	Thanks to Lemma \ref{thm fd unitary}, $V(0,\tilde{\nu^1})$ is not unitarizable.
	\end{proof}
\section{Appendix.\ Central property (T) for discrete quantum groups}
	In \cite{DFY}, a central Haagerup property and a central completely contractive approximation property for discrete quantum groups are introduced. In this appendix we define in a similar way a central property (T).
	For basic definitions, we refer \cite{DFY}.

	Let $G$ be a compact quantum group. Let $C^u(G)$ be the universal function algebra on $G$ and $C(G)$ the reduced function algebra.
	We denote the character algebra in $C^u(G)$ by $C^u({\rm Char}(G))$.
	\begin{df}\label{def (T)}
	We say $\dG$ has {\it central property ${\rm (T)}$} if the following hold:

	If a net of central states $(\omega_i) \subset C^u(G)^*$ converges to $\varepsilon$ in the weak*-topology, then it converges in norm.
	\end{df}
	\begin{rem}
	According to \cite[Theorem 3.1]{Kyed}, property (T) in the sense of \cite{Fima} is equivalent to the following:

	If a net of states $(\omega_i)$ converges to $\varepsilon$ in the weak*-topology, then it converges in norm.

	In particular, property (T) implies central property (T).
	\end{rem}
	We start with the following standard observation. For a standard properties of the Fell topology on the unitary dual of a C*-algebra, we refer \cite{Dix}.
	\begin{lem}\label{thm (T)C*}
	Let $A$ be a C*-algebra and $\chi$ a $*$-character of $A$. Then the following are equivalent.
	\begin{enumerate}[{\rm (i)}]
	\item The character $\chi$ is isolated in the unitary dual of $A$.
	\item If a net of states $(\omega_i)$ converges to $\chi$ in the weak*-topology, then it converges in norm.
	\item There exists a central projection $p^\chi$ in $A$ such that for any $a \in A$,
	\[a p^\chi = \chi(a) p^\chi.\]
	\end{enumerate}
	\end{lem}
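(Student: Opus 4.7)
The plan is to establish the implication cycle (i) $\Rightarrow$ (iii) $\Rightarrow$ (ii) $\Rightarrow$ (i).

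For (i) $\Rightarrow$ (iii), I would invoke the standard hull--kernel correspondence (cf.\ \cite{Dix}) between closed two-sided ideals of $A$ and closed subsets of $\widehat A$. Isolation of $\chi$ means $\widehat A \setminus \{\chi\}$ is closed, so it is the hull of a unique closed two-sided ideal $J \subset A$. Then $\chi$ is the unique irreducible representation not annihilating $J$, so $J$ is a simple C*-algebra with a single one-dimensional irreducible representation; hence $J \simeq \mathbb C$, and if we let $p^\chi$ denote its unit, then $p^\chi$ is a nonzero central projection in $A$. For any $a \in A$ we have $a p^\chi \in J = \mathbb C p^\chi$, so $a p^\chi = f(a) p^\chi$ for some scalar $f(a)$; noting that $\chi(p^\chi) = 1$ (since $p^\chi$ is a nonzero projection and $p^\chi = \chi(p^\chi) p^\chi$), applying $\chi$ to both sides forces $f = \chi$.

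For (iii) $\Rightarrow$ (ii), assume such a $p^\chi$ is given. Working in the unitisation if necessary, for any state $\omega$ on $A$ and any $a \in A$ with $\|a\| \leq 1$ I would write
\[\omega(a) - \chi(a) = \chi(a)\bigl(\omega(p^\chi) - 1\bigr) + \omega\bigl(a(1 - p^\chi)\bigr),\]
and apply the Cauchy--Schwarz inequality to bound
\[\bigl|\omega(a(1-p^\chi))\bigr|^2 \leq \omega(aa^*)\,\omega(1-p^\chi) \leq 1 - \omega(p^\chi).\]
If $\omega_i \to \chi$ in the weak*-topology, then $\omega_i(p^\chi) \to \chi(p^\chi) = 1$, so
\[\|\omega_i - \chi\| \leq \bigl(1 - \omega_i(p^\chi)\bigr) + \sqrt{1 - \omega_i(p^\chi)} \longrightarrow 0.\]

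For (ii) $\Rightarrow$ (i), I would argue by contradiction. If $\chi$ were not isolated, pick a net $(\pi_i)$ of irreducible representations inequivalent to $\chi$ converging to $\chi$ in $\widehat A$. Using the standard description of the Fell topology via weak* convergence of matrix coefficients (again \cite{Dix}), one obtains unit vectors $\xi_i \in H_{\pi_i}$ such that the vector states $\omega_i(a) := \langle \pi_i(a) \xi_i, \xi_i \rangle$ converge weak* to $\chi$. By (ii) they would converge in norm, contradicting the classical Glimm--Kadison fact that two inequivalent pure states of a C*-algebra are always at distance $2$. The only step that genuinely requires care is (i) $\Rightarrow$ (iii), where one must verify that the ideal produced by the hull--kernel correspondence is indeed $\mathbb C$ (so that it is unital) and that its unit is central in all of $A$ with the right multiplicative behaviour against $\chi$; the other two implications reduce to routine Cauchy--Schwarz and a standard fact about pure state distances.
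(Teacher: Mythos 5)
Your proposal is correct, and it closes the cycle of implications in a different order from the paper's. The paper argues (i) $\Leftrightarrow$ (ii) (attributed to the definition of the topology on the dual), then (ii) $\Rightarrow$ (iii) by reference to \cite[Theorem 17.2.4]{BO}, and finally (iii) $\Rightarrow$ (i) by applying \cite[Proposition 3.2.1]{Dix} to the decomposition $A = p^\chi A \oplus (1-p^\chi)A$. You instead prove (i) $\Rightarrow$ (iii) $\Rightarrow$ (ii) $\Rightarrow$ (i), with each step carried out explicitly rather than cited. Your (i) $\Rightarrow$ (iii) is essentially the converse of the paper's (iii) $\Rightarrow$ (i): where the paper starts from the projection and reads off the open-closed decomposition of $\widehat{A}$, you start from the open singleton $\{\chi\}$, pass to the corresponding nonzero ideal $J = \bigcap_{P\neq\ker\chi}P$ under the hull--kernel correspondence, and identify $J \cong \mathbb{C}$ from the fact that it has a single one-dimensional irreducible representation with trivial kernel; the centrality of its unit $p^\chi$ and the identity $ap^\chi = \chi(a)p^\chi$ then follow as you describe. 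Your (iii) $\Rightarrow$ (ii) via the decomposition $\omega(a)-\chi(a) = \chi(a)(\omega(p^\chi)-1) + \omega(a(1-p^\chi))$ plus Cauchy--Schwarz is the standard Kazhdan-projection estimate and is essentially what underlies the cited result of Brown--Ozawa. Your (ii) $\Rightarrow$ (i) is a little more than ``immediate from the definition'': you correctly note that it hinges on the openness of the map $P(A)\to\widehat{A}$ (to produce pure vector states $\omega_i$ of the approximating $\pi_i$ converging weak* to $\chi$) together with the Glimm--Kadison theorem that inequivalent pure states lie at norm distance exactly $2$; both ingredients are standard and make the argument rigorous where the paper is terse. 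Net effect: same mathematics, but your version is self-contained and makes explicit the two genuinely nontrivial inputs (hull--kernel correspondence and Glimm--Kadison) that the paper buries in citations.
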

	\begin{proof}
	The equivalence between {\rm (i)} and {\rm (ii)} is immediate from the definition of the topology on the unitary dual. From {\rm (ii)} to {\rm (iii)}, see \cite[Theorem 17.2.4]{BO}. (The proof works for general C*-algebras.)

	From {\rm (iii)} to {\rm (i)}, applying \cite[Proposition 3.2.1]{Dix} to $A = p^\chi A \oplus (1-p^\chi) A$, we can decompose the unitary dual of $A$ into a disjoint union of those of $p^\chi A$ and $(1-p^\chi) A$.
	Since the unitary dual of $p^\chi A$ is $\{\chi\}$, $\chi$ is isolated.
	\end{proof}
	For $\omega \in C^u(G)^*$, one can define a multiplier $T^u_\omega$ on $C^u(G)$
	\[T_\omega^u := (\omega \otimes {\rm id}) \Delta^u.\]
	Then since $\omega = \varepsilon \circ T_\omega^u$, we get
	\[\| \omega \| = \| T_\omega^u \|.\]

	If we start with a central multiplier $T^u$ on $C^u(G)$, that is, a completely bounded map from $C^u(G)$ to itself which is equivariant under the left-right action of $G$,
	then $\omega_T := \varepsilon \circ T^u$ is a central state. Hence we get a one-to-one correspondence between central completely bounded multipliers and central bounded functionals which preserves the norm.

	In particular, we get the following.
	\begin{prop}
	The following are equivalent.
	\begin{enumerate}[{\rm (i)}]
	\item The quantum group $\dG$ has central property ${\rm (T)}$.
	\item If a net of central completely positive multipliers $(T_i^u)$ on $C^u(G)$ converges to the identity pointwisely, then it converges in norm.
	\end{enumerate}
	\end{prop}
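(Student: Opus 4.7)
The plan is to exploit the norm-preserving bijection between central bounded functionals on $C^u(G)$ and central completely bounded multipliers, which the excerpt has already established via $\omega \mapsto T^u_\omega := (\omega \otimes \operatorname{id})\Delta^u$ with inverse $T^u \mapsto \omega_T := \varepsilon \circ T^u$. The first step is to observe that this bijection restricts to a correspondence between central states and central unital completely positive multipliers. If $\omega$ is a state, then $T^u_\omega$ is the slice of the $*$-homomorphism $\Delta^u$ by a state, so it is UCP; conversely, if $T^u$ is central and UCP, then $\omega_T = \varepsilon \circ T^u$ is a state, since $\varepsilon$ is a $*$-character and $T^u(1)=1$.

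Next I would translate the two modes of convergence into each other. Suppose $\omega_i \to \varepsilon$ in the weak-$*$ topology. The net of states is uniformly bounded, and for each fixed $a \in C^u(G)$ the element $\Delta^u(a) \in C^u(G) \otimes C^u(G)$ can be norm-approximated by finite sums $\sum_k b_k \otimes c_k$; a standard $3\varepsilon$ argument then yields $T^u_{\omega_i}(a) = (\omega_i \otimes \operatorname{id})\Delta^u(a) \to (\varepsilon \otimes \operatorname{id})\Delta^u(a) = a$ in norm, hence $T^u_{\omega_i} \to \operatorname{id}$ pointwise. Conversely, pointwise convergence $T^u_i \to \operatorname{id}$ gives $\omega_{T_i}(a) = \varepsilon(T^u_i(a)) \to \varepsilon(a)$ by continuity of $\varepsilon$, so $\omega_{T_i} \to \varepsilon$ weak-$*$.

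Finally, the norm-preservation of the correspondence gives $\|\omega_i - \varepsilon\| = \|T^u_{\omega_i} - \operatorname{id}\|$, so norm convergence on one side is equivalent to norm convergence on the other. Combining these three observations yields both implications: starting from (i), a net of central UCP multipliers converging pointwise to the identity produces, via $T^u_i \mapsto \omega_{T_i}$, a net of central states converging to $\varepsilon$ weak-$*$, which by (i) converges in norm, and norm preservation then gives norm convergence of the multipliers; the reverse implication is analogous. The only mildly delicate point is the norm (as opposed to pointwise) convergence of $(\omega_i \otimes \operatorname{id})\Delta^u(a)$ to $a$, where uniform boundedness of the net of states must be used carefully; every other step is essentially formal.
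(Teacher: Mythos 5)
Your proposal is correct and follows essentially the same route as the paper, which derives the proposition directly from the preceding norm-preserving bijection $\omega \leftrightarrow T^u_\omega$ between central bounded functionals and central completely bounded multipliers. You correctly flag the one non-formal step --- that weak-$*$ convergence of the states $\omega_i$ to $\varepsilon$ yields norm-pointwise convergence $T^u_{\omega_i}(a)\to a$ via finite-tensor approximation of $\Delta^u(a)$ and uniform boundedness --- which the paper leaves implicit.
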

	In the completely same way as in \cite[Proposition 6.3]{Fre}, we get the following.
	\begin{prop}\label{thm mon-equiv}
	Let $G$ and $H$ compact quantum groups which are monoidally equivalent.
	Then we have a one-to-one correspondence between central completely bounded multipliers on $C^u(G)$ and $C^u(H)$ preserving the norm and the weak*-topology.

	In particular, if $\dG$ has central property ${\rm (T)}$, so is $\widehat{H}$.
	\end{prop}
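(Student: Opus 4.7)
The plan is to reduce central cb multipliers on $C^u(G)$ to families of scalars indexed by irreducible corepresentations and transport these scalars across the monoidal equivalence. A central cb multiplier $T^u$ on $C^u(G)$ commutes with the left-right $G$-action, hence preserves each Peter-Weyl block and must act as a scalar $t_\lambda \in \mathbb{C}$ on the isotypical component of each $V(\lambda) \in \mathrm{Irr}(G)$. Conversely, any such family of scalars giving a bounded map defines a central cb multiplier. Thus the data of a central cb multiplier is intrinsic to the fusion ring.

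A monoidal equivalence between $G$ and $H$ provides a bijection $\Phi:\mathrm{Irr}(G) \to \mathrm{Irr}(H)$ that preserves fusion rules and quantum dimensions. I would define the correspondence $T^u \mapsto S^u$ to be the map that sends the family $(t_\lambda)$ to $(t_{\Phi^{-1}(\mu)})_{\mu \in \mathrm{Irr}(H)}$. Since the identifications $\omega = \varepsilon \circ T^u$ and pointwise convergence are both read off the scalar families, weak*-convergence (equivalently, pointwise convergence in this central setting) is preserved under the correspondence, as is the property of being a state / completely positive.

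The main step, and the principal obstacle, is to show that the cb norm is preserved. Here I would use the standard linking algebra (bi-Galois extension) $B$ attached to the monoidal equivalence: $B$ is a unital C*-algebra carrying commuting ergodic coactions of $G$ and $H$, whose spectral subspaces realize $\Phi$. Any central cb multiplier $T^u$ on $C^u(G)$ extends naturally to an $H$-equivariant cb map on $B$ by letting it act as the scalar $t_\lambda$ on the spectral subspace of $B$ corresponding to $V(\lambda)$; reading this extension from the $H$-side recovers $S^u$. This is precisely the argument of \cite[Proposition 6.3]{Fre}, which yields equality of cb norms; the only novelty is that centrality is preserved automatically because the transfer is defined at the level of the scalar families.

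Finally, combining this norm- and weak*-topology-preserving bijection with the previous proposition gives the property (T) consequence: a net of central completely positive multipliers on $C^u(G)$ converging weak* to the identity transports to a net of central cp multipliers on $C^u(H)$ with the same convergence properties, and norm convergence transports back. Hence central property (T) for $\widehat G$ implies the same for $\widehat H$, and by symmetry the two properties are equivalent under monoidal equivalence.
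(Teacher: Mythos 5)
Your proposal reconstructs exactly the argument the paper invokes: the paper's proof is simply a citation to Freslon's Proposition 6.3, and your scalar-family/linking-algebra transfer is precisely what that result does, with the observation that centrality is automatic since the correspondence acts on the scalar families $(t_\lambda)$. This is the same approach as the paper, just spelled out in more detail.
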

	In the Kac type case, central property (T) is equivalent to original property (T).
	To show this, we prepare a lemma.
	\begin{lem}
	Let $G$ be a compact quantum group of Kac type. Then there exists a conditional expectation
	\[E:C^u(G) \to C^u({\rm Char}(G))\]
	such that
	\[E(u^\pi_{ij}) = \frac{1}{\dim \pi} \chi(\pi) \delta_{ij}.\]
	\end{lem}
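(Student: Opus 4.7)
The plan is to construct $E$ by averaging against the right adjoint coaction. Specifically, let $\alpha : C^u(G) \to C^u(G) \otimes C^u(G)$ be the right adjoint coaction, defined in Sweedler notation by $\alpha(x) = x_{(2)} \otimes S(x_{(1)}) x_{(3)}$, and let $h$ denote the Haar state. The Kac hypothesis ensures that $S$ extends to a bounded $*$-antihomomorphism on $C^u(G)$, so $\alpha$ is well-defined as a $*$-homomorphism and I can set
\[ E := (\mathrm{id} \otimes h) \circ \alpha. \]

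To verify the explicit formula, I would expand $\alpha$ on matrix coefficients as $\alpha(u^\pi_{ij}) = \sum_{k,l} u^\pi_{kl} \otimes S(u^\pi_{ik}) u^\pi_{lj}$, then use the Kac identity $S(u^\pi_{ij}) = (u^\pi_{ji})^*$ and the orthogonality relation $h((u^\pi_{ij})^* u^\pi_{kl}) = \delta_{ik} \delta_{jl}/\dim \pi$ to obtain
\[ E(u^\pi_{ij}) = \sum_{k,l} u^\pi_{kl} \cdot \frac{\delta_{kl}\, \delta_{ij}}{\dim \pi} = \frac{1}{\dim \pi}\, \chi(\pi)\, \delta_{ij}. \]

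Finally I would check that $E$ is a conditional expectation onto $C^u(\mathrm{Char}(G))$. Complete positivity is automatic, since $\alpha$ is a $*$-homomorphism and $h$ is a state. To see that the image sits inside the character algebra, I would apply $\mathrm{id} \otimes \mathrm{id} \otimes h$ to the coaction identity $(\alpha \otimes \mathrm{id})\alpha = (\mathrm{id} \otimes \Delta)\alpha$: right-invariance of $h$ collapses the right-hand side and yields $\alpha(E(x)) = E(x) \otimes 1$, identifying $E(x)$ as an $\alpha$-invariant element, hence an element of $C^u(\mathrm{Char}(G))$. The identity $E|_{C^u(\mathrm{Char}(G))} = \mathrm{id}$ follows from $\alpha(y) = y \otimes 1$ for $\alpha$-fixed $y$, and the bimodule property $E(yxz) = y\, E(x)\, z$ for $y, z \in C^u(\mathrm{Char}(G))$ is immediate from multiplicativity of $\alpha$. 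The only nontrivial ingredient is the extension of $\alpha$ to the C*-completion, which is exactly where the Kac hypothesis enters via the boundedness of $S$; everything else is a direct Peter-Weyl-type computation.
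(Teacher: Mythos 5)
The central step of your argument — deducing complete positivity of $E$ from the claim that the adjoint coaction $\alpha(x) = x_{(2)} \otimes S(x_{(1)})x_{(3)}$ is a $*$-homomorphism — does not hold. Expanding on products gives
\[
\alpha(xy) = x_{(2)}y_{(2)} \otimes S(y_{(1)})S(x_{(1)})\,x_{(3)}y_{(3)},
\qquad
\alpha(x)\alpha(y) = x_{(2)}y_{(2)} \otimes S(x_{(1)})x_{(3)}\,S(y_{(1)})y_{(3)},
\]
and these disagree in general, even when $S^2 = \mathrm{id}$: the issue is commutativity of the legs in the second tensor factor, not boundedness or $*$-compatibility of $S$. (In the group case $G$ a compact group, $C(G)$ is commutative and the two expressions coincide, which is why the classical averaging over conjugation works; for a genuinely noncommutative Kac algebra they need not.) Consequently $\alpha$ does not in general extend to a $*$-homomorphism on $C^u(G)$, and you cannot conclude positivity of $E = (\mathrm{id}\otimes h)\circ\alpha$ this way, nor the extension from $\mathcal O(G)$ to $C^u(G)$. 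Your explicit Peter--Weyl computation of $E(u^\pi_{ij})$ is correct at the algebraic level, and the image does land in the character algebra; what is missing is precisely the complete positivity and C*-continuity that make $E$ a conditional expectation.

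The paper avoids this exact trap. Instead of the adjoint coaction into two tensor legs, it uses the $*$-homomorphism
\[
\alpha : C^u(G) \to \bigl(C(G)\otimes C(G)\bigr)\otimes_{\max} C(G),\qquad x \mapsto (x_{(3)}\otimes x_{(1)})\otimes x_{(2)},
\]
which \emph{is} multiplicative because the three Sweedler legs sit in separate tensor slots and no reordering of noncommuting elements occurs. Injectivity of $\alpha$ is established by exhibiting a left inverse built from the regular representations (Fell absorption), and then $E$ is obtained by conjugating the honest, manifestly completely positive conditional expectation $E^\Delta\otimes\mathrm{id}$ (where $E^\Delta: C(G)\otimes C(G)\to\Delta(C(G))$ is the $\varphi\otimes\varphi$-preserving expectation) by $\alpha$. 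You would need to replace your positivity step by something of this nature — the naive averaging against the adjoint coaction does not give a completely positive map for free in the noncommutative Kac setting.
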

	\begin{proof}
	Let us consider the left and right regular representations:
	\[\dlambda, \drho: C(G) \to B(L^2(G)).\]
	Notice that
	\[(\dlambda(x_{(1)}) \drho(x_{(2)}) \Omega, \Omega) = \varepsilon(x),\]
	where $\Omega$ is the canonical cyclic vector in $L^2(G)$.

	First, we claim
	\[\alpha: C^u(G) \to (C(G) \otimes C(G)) \otimes_{{\rm max}} C(G): x \mapsto (x_{(3)} \otimes x_{(1)}) \otimes x_{(2)}\]
	is injective. Using Fell's absorption,
	\[\pi: (C(G) \otimes C(G))\otimes_{{\rm max}} C(G) \to B(L^2(G) \otimes L^2(G)) \otimes C^u(G) : x \otimes y \otimes z \mapsto \dlambda(y) \drho(z_{(1)}) \otimes \dlambda(x) \drho(z_{(3)}) \otimes z_{(2)}\]
	is a well-defined representation.
	Now
	\[{\rm id} = (\omega_\Omega \otimes \omega_\Omega \otimes {\rm id}) \circ \pi \circ \alpha\]
	shows $\alpha$ is injective.
	
	Finally consider the $\varphi \otimes \varphi$-preserving conditional expectation
	\[E^\Delta: C(G) \otimes C(G) \to \Delta(C(G)).\]
	Then the image of $(E^\Delta \otimes {\rm id})\alpha$ is $\alpha(C^u(\mathrm{Char}G))$ and
	\[E = \alpha^{-1} (E^\Delta \otimes {\rm id})\alpha\]
	is the desired conditional expectation.
	\end{proof}
	\begin{prop}\label{Thm (T)-char}
	Let $G$ be a compact quantum group of Kac type. Then the following are equivalent.
	\begin{enumerate}[{\rm (i)}]
	\item The quantum group $\dG$ has property $\rm{(T)}$.
	\item The quantum group $\dG$ has central property $\rm{(T)}$.
	\item If a net of states $(\omega_i)$ on the character algebra $C^u({\rm Char}(G))$ converges to $\varepsilon$ in the weak*-topology, it converges in norm.
	\end{enumerate}
	\end{prop}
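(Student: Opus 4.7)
The plan is to close the chain (i) $\Rightarrow$ (ii) $\Rightarrow$ (iii) $\Rightarrow$ (i). The first implication is immediate: central states are states. For (ii) $\Rightarrow$ (iii), given a net $(\omega_i)$ of states on $C^u({\rm Char}(G))$ tending to $\varepsilon|_{C^u({\rm Char}(G))}$ weak-$*$, set $\tilde \omega_i := \omega_i \circ E$ on $C^u(G)$. Since $\tilde\omega_i(u^\pi_{jk}) = \omega_i(\chi(\pi))\delta_{jk}/\dim\pi$, each $\tilde\omega_i$ is a central state, and $\tilde\omega_i \to \varepsilon \circ E = \varepsilon$ weak-$*$. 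By (ii) they converge in norm, and restricting via $E|_{C^u({\rm Char}(G))} = {\rm id}$ gives $\omega_i \to \varepsilon$ in norm.

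The heart of the argument is (iii) $\Rightarrow$ (i). Applying Lemma \ref{thm (T)C*} to the C*-algebra $C^u({\rm Char}(G))$, condition (iii) produces a central projection $p \in C^u({\rm Char}(G))$ with $ap = \varepsilon(a) p$ for every $a \in C^u({\rm Char}(G))$. I will upgrade this to the identity $xp = \varepsilon(x) p$ for all $x \in C^u(G)$; another application of Lemma \ref{thm (T)C*} then yields (i). Centrality of $p$ in the whole of $C^u(G)$ is automatic in the Kac setting, since characters of irreducible representations are central and generate $C^u({\rm Char}(G))$.

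To establish the global identity, set $y := (x - \varepsilon(x))p$ and analyze $E(y^*y)$. Because $p$ is central and $E$ is a $C^u({\rm Char}(G))$-bimodule map, $y^*y = p z p = z p$ with $z := (x - \varepsilon(x))^*(x - \varepsilon(x))$, so $E(y^*y) = E(z) p$. Now $\varepsilon(E(z)) = \varepsilon(z) = |\varepsilon(x - \varepsilon(x))|^2 = 0$ since $\varepsilon$ is a $*$-character and $\varepsilon \circ E = \varepsilon$. Thus $E(z) \in C^u({\rm Char}(G)) \cap \ker\varepsilon$, so the defining property of $p$ gives $E(z) p = 0$, hence $E(y^*y) = 0$. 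Faithfulness of $E$ then forces $y = 0$, which is the claim.

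The main obstacle is the faithfulness of $E$. This is not stated in the preceding lemma but follows from its explicit construction $E = \alpha^{-1}(E^\Delta \otimes {\rm id})\alpha$: the map $\alpha$ is an injective $*$-homomorphism, and $E^\Delta: C(G) \otimes C(G) \to \Delta(C(G))$ preserves the faithful state $\varphi \otimes \varphi$ and so is a faithful conditional expectation; hence the composition is faithful on positive elements, which is all that is required in the computation above.
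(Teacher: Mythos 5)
Your plan for (i)\,$\Rightarrow$\,(ii) and (ii)\,$\Rightarrow$\,(iii) matches the paper's argument and is fine. The problem is the implication (iii)\,$\Rightarrow$\,(i), where your proof has a genuine gap.

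You assert that ``centrality of $p$ in the whole of $C^u(G)$ is automatic in the Kac setting, since characters of irreducible representations are central and generate $C^u(\mathrm{Char}(G))$.'' This is not true. Characters of a Kac type compact quantum group need not be central in $C^u(G)$: the character algebra sits inside $C^u(G)$ as the fixed-point algebra of an adjoint-type coaction, but it is in general neither commutative nor contained in the center. For instance, if $\Gamma$ is a discrete group and $G = \widehat\Gamma \times K$ with $K$ a compact group, then $C^u(G) = C^*(\Gamma) \otimes C(K)$ and the characters are the elements $u_g \otimes \chi$ with $\chi$ a class function; these are certainly not central unless $\Gamma$ is abelian. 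Since the centrality claim is precisely what your subsequent calculation ($y^*y = pzp = zp$, then pushing $p$ past $E$) rests on, the argument is circular: you need $p$ to commute with all of $C^u(G)$ in order to manipulate $y^*y$, but that commutation is the very thing to be proved. The paper instead proves centrality directly: from $\chi(\pi) p^\varepsilon \xi = \dim(\pi) p^\varepsilon \xi$ and the fact that each $u^\pi_{ii}$ is a contraction, strict convexity of the Hilbert norm forces $u^\pi_{ii} p^\varepsilon \xi = p^\varepsilon \xi$; combined with the analogous statement for $(u^\pi_{ii})^*$ and the unitarity relation $\sum_k (u^\pi_{ki})^* u^\pi_{kj} = \delta_{ij}$, this yields $u^\pi_{ij} p^\varepsilon \xi = \delta_{ij} p^\varepsilon \xi$, i.e.\ $x p^\varepsilon = \varepsilon(x) p^\varepsilon$, and taking adjoints gives centrality. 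This route avoids any appeal to faithfulness of $E$.

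The faithfulness of $E$ is a secondary concern but also not established by your sketch. The Haar state of a Kac type quantum group is faithful on the reduced algebra $C(G)$ but generally not on $C^u(G)$ unless $G$ is coamenable; and while $E^\Delta$ on $C(G) \otimes C(G)$ is faithful, it is not immediate that $E^\Delta \otimes \mathrm{id}$ remains faithful on the maximal tensor product (or on the image of $\alpha$ inside it). Since the paper's argument does not require faithfulness of $E$, this issue disappears once you adopt the contraction argument above.
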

	\begin{proof}
	As we observed, {\rm (i)} implies {\rm (ii)}.

	To show {\rm (ii)} implies {\rm (iii)}, suppose $\dG$ has property (T). Take a net of states $(\omega_i)$ on $C^u({\rm Char(G)})$ converging to $\varepsilon$ in the weak*-topology.
	By the previous lemma, there exists a conditional expectation
	\[E:C^u(G) \to C^u({\rm Char}(G)).\]
	Then $(\omega_i \circ E)$ is a net of central states on $C^u(G)$ converging to $\varepsilon$ in the weak*-topology. Hence it converges in norm. Since $\omega_i \circ E$ is nothing but $\omega_i$ on $C^u({\rm Char}(G))$, $(\omega_i)$ converges to $\varepsilon$ in norm.

	We show {\rm (iii)} implies {\rm (i)}.
	Take the projection $p^\varepsilon \in C^u({\rm Char}(G))$ as in Lemma \ref{thm (T)C*}. We claim $x p^\varepsilon = p^\varepsilon x = \varepsilon(x) p^\varepsilon$. To see this, embed $C^u(G)$ into $B(H)$ for a Hilbert space $H$. Then
	\[\chi(\pi) p^\varepsilon \xi = \varepsilon(\chi(\pi)) p^\varepsilon \xi = \dim(\pi) p^\varepsilon \xi.\]
	Since $u^\pi_{ii}$ is a contraction, this shows
	\[u^\pi_{ii} p^\varepsilon \xi = p^\varepsilon \xi.\]
	In a similar way,
	\[(u^\pi_{ii})^* p^\varepsilon \xi = p^\varepsilon \xi.\]
	Since $\sum_k (u^\pi_{ki})^* u^\pi_{kj} = 1$,
	\[u^\pi_{ij} p^\varepsilon \xi = \delta_{ij} p^\varepsilon \xi,\]
	that is,
	\[x p^\varepsilon = \varepsilon(x) p^\varepsilon.\]
	Taking $*$ of the formula, we get $p^\varepsilon$ is central. Again from Lemma \ref{thm (T)C*}, $\dG$ has property (T).
	\end{proof}
	\begin{prop}\label{(T) of double}
	Suppose the Drinfeld double $D(G)$ has property ${\rm (T)}$. Then $\dG$ has central property {\rm (T)}.
	\end{prop}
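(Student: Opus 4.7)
The strategy is to convert property (T) of $D(G)$ into an isolation statement on the character algebra sitting inside $C^u(D(G))$, and then transfer that isolation back to central states on $C^u(G)$. The cast of characters from the body of the paper — the Haar projection $\varphi$, the identification $\varphi D_c \varphi \simeq \cO(\CqGq)$ from Lemma \ref{thm CqGq}, and Lemma \ref{thm (T)C*} — does most of the work.

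The first step is to apply Lemma \ref{thm (T)C*} to the counit $\varepsilon$ of $C^u(D(G))$: property (T) yields a central projection $p \in C^u(D(G))$ with $ap = \varepsilon(a) p$ for all $a$. Since $\varphi$ is the Haar projection and satisfies $\varepsilon(\varphi) = 1$, we get $p = \varphi p = p \varphi$, so $p$ lies in the corner $\varphi C^u(D(G)) \varphi$. Upgrading Lemma \ref{thm CqGq} to the $C^*$-level identifies this corner with the universal $C^*$-completion $C^u(\CqGq)$ of the $q$-character algebra. Thus $p$ is a central projection in $C^u(\CqGq)$ with $xp = \varepsilon(x) p$ for every $x$, and by the converse direction of Lemma \ref{thm (T)C*} the counit is isolated in the state space of $C^u(\CqGq)$.

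The second step is to identify central states on $C^u(G)$ with states on $C^u(\CqGq)$ in a way that is both weak*-continuous and isometric. In the Kac case this is exactly the content of Proposition \ref{Thm (T)-char}, where a conditional expectation $C^u(G) \to C^u(\mathrm{Char}(G))$ built from the regular representations implements the correspondence. In the general case the relevant character algebra is the \emph{twisted} one $\cO(\CqGq)$, built from the action $\ad^S$, precisely because it is this twist that matches the modular automorphism group $\sigma_t(x) = K_{-2it\rho} \triangleright x \triangleleft K_{-2it\rho}$ of $\cO(G_q)$; this is the same reason why spherical $D$-modules correspond to characters of $\cO(\CqGq)$ in Corollary \ref{thm X/W}. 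Given this correspondence, a net of central states $(\omega_i)$ on $C^u(G)$ with $\omega_i \to \varepsilon$ weak* restricts to states on $C^u(\CqGq)$ converging weak* to $\varepsilon$; by the isolation from step one they converge in norm; by isometry $\|\omega_i - \varepsilon\| \to 0$, which is central property (T).

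\textbf{Main obstacle.} The delicate technical point is step two: upgrading Lemma \ref{thm CqGq} to an identification at the universal $C^*$-level and establishing the isometric bijection between central states on $C^u(G)$ and states on $C^u(\CqGq)$ in the non-Kac setting. The Kac-case conditional expectation argument of Proposition \ref{Thm (T)-char} must be twisted by the modular data, and one needs to verify that the resulting map preserves the operator norm on functionals, not just the weak*-topology and positivity. Once that matching is in place, the projection-level argument of the first paragraph gives the conclusion immediately.
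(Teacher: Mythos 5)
Your proof takes a genuinely different route from the paper's, and it has two real gaps that you have named but not closed.

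The paper's argument is shorter and bypasses the $q$-character algebra entirely: it takes a central state $\omega$ on $C^u(G)$, forms ${\rm Ind}\,\omega$ on $C^u_0(\widehat{D(G)})$, and uses the retraction $\omega = ({\rm Ind}\,\omega)\circ\pi$ along the canonical map $\pi:C^u(G)\to M(C^u_0(\widehat{D(G)}))$ to get the norm estimate $\|\omega-\mu\|\le\|{\rm Ind}\,\omega - {\rm Ind}\,\mu\|$ directly. Since ${\rm Ind}$ is weak*-continuous and sends $\varepsilon\mapsto\varepsilon_D$, isolation of $\varepsilon_D$ (Lemma \ref{thm (T)C*} applied to $D(G)$) immediately gives isolation of $\varepsilon$ among central states. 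No corner identification and no conditional expectation onto the character algebra is needed.

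Your route passes through $\varphi C^u_0(\widehat{D(G)})\varphi$ and $C^u(\CqGq)$, and both transitions are problematic. First, Lemma \ref{thm CqGq} gives only the algebraic isomorphism $\varphi D_c\varphi\simeq\cO(\CqGq)$; at the C*-level one has a canonical surjection $C^u(\CqGq)\twoheadrightarrow\varphi C^u_0(\widehat{D(G)})\varphi$, but these need not coincide. Isolation of $\varepsilon$ in the unitary dual of the corner (which you do get, correctly, from $p=\varphi p\varphi$ and the hereditary embedding $\widehat{\varphi A\varphi}\hookrightarrow\widehat{A}$) therefore does \emph{not} transfer to isolation in the possibly larger dual of $C^u(\CqGq)$; the inclusion of duals points the wrong way for that. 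Second, the isometric correspondence between central states on $C^u(G)$ and states on the $q$-character algebra is established in the paper only for Kac type (Proposition \ref{Thm (T)-char}), and the mechanism used there genuinely breaks outside that case: the $\varphi\otimes\varphi$-preserving conditional expectation $E^\Delta:C(G)\otimes C(G)\to\Delta(C(G))$ exists precisely when $\Delta(C(G))$ is globally invariant under $\sigma_t^{\varphi\otimes\varphi}=\sigma_t\otimes\sigma_t$, and for non-Kac $G$ one has $\Delta\sigma_t=(\tau_t\otimes\sigma_t)\Delta\ne(\sigma_t\otimes\sigma_t)\Delta$, so that Takesaki's criterion fails. Replacing ${\rm Char}(G)$ by the twisted algebra $\CqGq$ is the right instinct heuristically, but it does not by itself produce the needed norm-preserving retraction of central states, and nothing in the paper supplies one. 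These are exactly the two obstacles the paper's choice of ${\rm Ind}$ is designed to sidestep; as written, your argument is incomplete without them.
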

	\begin{proof}
	Thanks to Lemma \ref{thm (T)C*}, the assumption is equivalent to the following:
	If a net of states on $\cO(G) \bowtie c_c(\dG)$ converges to the counit $\varepsilon_D$, then it converges in norm.

	Take central states $\omega,\mu$ on $C^u(G)$,
	Let $\pi:C^u(G) \to M(C^u_0(\widehat{D}(G)))$ be the $*$-homomorphism extending $\cO(G) \to M(\cO(G) \bowtie c_c(\dG))$. Then since $\omega = {\rm Ind}\omega \circ \pi$,
	\[\| \omega - \mu \| = \| ({\rm Ind}\omega - {\rm Ind}\mu) \circ \pi \| \leq \| {\rm Ind}\omega - {\rm Ind}\mu \|.\]
	In particular, for a net of central states $(\omega_i)$, if $({\rm Ind}\omega_i)$ converges to ${\rm Ind} \omega$ in norm, $(\omega_i)$ converges to $\omega$ in norm.

	Hence if we have a net of central states $(\omega_i)$ converges to $\varepsilon$ in the weak*-topology, then $({\rm Ind}\omega_i)$ converges to $\varepsilon_D = {\rm Ind}\varepsilon$ in the weak*-topology.
	By assumption, it converges in norm, which turns out to be the convergence of $(\omega_i)$ to $\varepsilon$ in norm, which is desired.
	\end{proof}
	\begin{cor}
	The discrete quantum group $\widehat{SU_q(2n+1)}$ has central property ${\rm (T)}$.
	\end{cor}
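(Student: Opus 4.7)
The plan is to observe that this corollary is essentially a formal consequence of two results already in place: Main Theorem 2, which asserts that the Drinfeld double of $SU_q(2n+1)$ has property $\mathrm{(T)}$, and Proposition \ref{(T) of double}, which converts property $\mathrm{(T)}$ of the Drinfeld double $D(G)$ into central property $\mathrm{(T)}$ for $\dG$. So the proof is a one-line combination: take $G = SU_q(2n+1)$, apply Main Theorem 2 to conclude $D(SU_q(2n+1))$ has property $\mathrm{(T)}$, and then apply Proposition \ref{(T) of double} to conclude $\widehat{SU_q(2n+1)}$ has central property $\mathrm{(T)}$.

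No step here is really an obstacle, since all the genuine work was done earlier. The nontrivial content is buried in Main Theorem 2, whose proof relies on the irreducibility machinery of Section~\ref{sec-ad}, the parabolic induction construction of Section~\ref{sec non-min}, the invariant sesquilinear form, the norm estimate in Lemma \ref{thm fd unitary}, and the explicit case analysis of unitarizability culminating in the isolation argument for $V(0,\nu)$ near $2\rho$. The passage from property $\mathrm{(T)}$ of the Drinfeld double to central property $\mathrm{(T)}$ of $\dG$ in Proposition \ref{(T) of double} uses the $*$-homomorphism $\pi:C^u(G) \to M(C^u_0(\widehat{D(G)}))$ and the induction of central states, bounding $\|\omega-\mu\|$ by $\|\mathrm{Ind}\,\omega - \mathrm{Ind}\,\mu\|$.

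If I wanted to flesh out the one-line proof for the reader, I would explicitly recall that $G = SU_q(2n+1)$ is a compact quantum group (so the setup of the appendix applies), that Main Theorem 2 gives property $\mathrm{(T)}$ for $D(G)$, and then cite Proposition \ref{(T) of double} directly to transfer the conclusion. No monoidal equivalence, no Kac-type reduction via Proposition \ref{Thm (T)-char}, and no direct verification of Definition \ref{def (T)} is needed: the implication is already packaged.

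In summary, the entire proof reads: by Main Theorem 2 the Drinfeld double $D(SU_q(2n+1))$ has property $\mathrm{(T)}$, and Proposition \ref{(T) of double} then yields central property $\mathrm{(T)}$ for $\widehat{SU_q(2n+1)}$.
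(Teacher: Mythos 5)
Your proof is correct and matches the paper's (implicit) argument: the corollary follows immediately by combining Main Theorem 2, which gives property (T) for the Drinfeld double of $SU_q(2n+1)$, with Proposition \ref{(T) of double}, which converts property (T) of the Drinfeld double into central property (T) for the dual discrete quantum group.
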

	As another application of Theorem \ref{thm sl3}, we get the following restriction on unitary fiber functors of ${\rm Rep}(SU_q(3))$.
	\begin{prop}
	Let $F: {\rm Rep}(SU_q(3)) \to {\rm Hilb}_f$ be a unitary fiber functor. Let $x \in {\rm Rep}(SU_q(3))$ be the fundamental representation of $SU_q(3)$. Then we have either $\dim F(x) = q^2 + 1 + q^{-2}$ or $\dim F(x) \leq q + 1 + q^{-1}$.
	\end{prop}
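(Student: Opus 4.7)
The plan is to associate to $F$ a unitarizable spherical $D$-module, apply Theorem~\ref{thm sl3}, and bound $\dim F(x)=\mathrm{Tr}_{\omega_1}(K_\nu)$ in each of the resulting cases. By Tannaka--Krein, $F$ yields a compact quantum group $H$ monoidally equivalent to $SU_q(3)$, and transporting the trivial $D(H)$-module along the induced equivalence of Drinfeld centers gives a unitary spherical $D$-module realizing the dimension function $\dim F$. By Lemma~\ref{thm CqGq} the associated $*$-character on $\varphi D_c\varphi\simeq\cO(\CqGq)$ has the form $K_{\nu+2\rho}$ for a unique $\nu\in X/W$, so
\[
\dim F(V(\lambda))=K_{\nu+2\rho}(\chi_q(\lambda))=\mathrm{Tr}_\lambda(K_\nu),
\]
and $V(0,\nu)$ is unitarizable. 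Hence Theorem~\ref{thm sl3} places $\nu$ in one of its three cases.

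The three weights of $V(\omega_1)$ are the permutations of $\mu_1=(2/3,-1/3,-1/3)$, so $\dim F(x)=\sum_{j=1}^{3}q^{(\nu,\mu_j)}$. In case~(i), $\nu\in i\fh^*_{\mathbb{R}}$, each summand has modulus $1$, giving $\dim F(x)\le 3<q+1+q^{-1}$. In case~(ii), for the representative $\nu=(t+is,-t+is,-2is)$ one computes $(\nu,\mu_1)=t+is$, $(\nu,\mu_2)=-t+is$, $(\nu,\mu_3)=-2is$; shifts by $2\pi i\log(q)^{-1}P$ and Weyl permutations preserve the moduli $q^t,q^{-t},1$, and since $|t|\le 1$ and $t\mapsto q^t+q^{-t}$ is convex, $\dim F(x)\le q^t+q^{-t}+1\le q+1+q^{-1}$.

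Case~(iii) is the delicate one. Writing $\nu=2\rho+2\pi i\log(q)^{-1}\xi$ with $\xi\in P$, the three moduli are $q^2,1,q^{-2}$, and since $P/Q\cong\mathbb{Z}/3$, the three phases depend only on $\xi\bmod Q$. A direct computation using $(\omega_1,\mu_j)=(2/3,-1/3,-1/3)$ shows that $\xi=\omega_1$ yields all three phases equal to $e^{-2\pi i/3}$, and similarly $\xi=\omega_2$ gives them all equal to $e^{2\pi i/3}$; in either nontrivial coset one has $\dim F(x)=\zeta(q^2+1+q^{-2})$ for a nontrivial cube root of unity $\zeta$, which cannot be a positive integer. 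Hence $\xi\in Q$ and $\dim F(x)=q^2+1+q^{-2}$.

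The main obstacle is the first paragraph: cleanly turning the categorical data of a unitary fiber functor into a unitarizable spherical $D$-module. Once that is done, the remainder is a careful triangle inequality applied to the three eigenvalues of $K_\nu$ on $V(\omega_1)$, with the cube-root-of-unity calculation in case~(iii) being the concrete new ingredient needed to rule out the non-trivial central characters in case~(iii) and collapse the bound onto the discrete value $q^2+1+q^{-2}$.
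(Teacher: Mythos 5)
Your overall strategy is correct and reaches the right conclusion, but the route differs from the paper's in two places, and the first paragraph has a genuine gap (which you flag yourself).

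On the first paragraph: ``transporting the trivial $D(H)$-module along the induced equivalence of Drinfeld centers'' does not produce the object you want — the trivial object of $Z(\mathrm{Rep}(H))$ is sent to the trivial object of $Z(\mathrm{Rep}(G))$, which realizes the trivial character, not $\dim F$. The paper instead works at the level of central states: by \cite{DFY} the functional $u^\pi_{ij} \mapsto \frac{\dim(\pi)}{\dim_q(\pi)}\delta_{ij}$ is a \emph{central state} on the quantum group $H$ dual to $F$, and Proposition~\ref{thm mon-equiv} says central completely positive multipliers transport across a monoidal equivalence while preserving positivity; hence $\omega_F : u^\pi_{ij}\mapsto \frac{\dim F(\pi)}{\dim_q(\pi)}\delta_{ij}$ is a central state on $SU_q(3)$. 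This is the precise statement that replaces your Drinfeld-center transport, and it is strictly cheaper.

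On the case analysis: the paper avoids it entirely. Since $\dim F$ is a dimension function, $\dim F(x)=\dim F(\overline{x})=N$, and together with $\prod_j q^{(\nu,\mu_j)}=1$ the Vieta/symmetric-function argument forces the three eigenvalues of $K_\nu$ on $V(\omega_1)$ to be $\{1,q^t,q^{-t}\}$, i.e.\ $\nu \in W\cdot t\rho$ with $t$ real (if $N\geq 3$) or purely imaginary (if $N\leq 3$). Theorem~\ref{thm sl3} then immediately gives $t\le 1$ or $t=2$, hence the stated dichotomy. Your alternative — running through all three cases of Theorem~\ref{thm sl3} and bounding $|\mathrm{Tr}_{\omega_1}(K_\nu)|$ term by term, with the cube-root-of-unity computation on $P/Q \cong \mathbb{Z}/3$ ruling out the nontrivial cosets in case~(iii) — is correct and, in fact, avoids the small point that $N=q^t+1+q^{-t}$ is solvable with $t\ge 0$ only when $N\ge 3$. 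The weight computations ($(\nu,\mu_j)=\nu_j$, moduli $q^t,q^{-t},1$ in case~(ii), phase analysis in case~(iii)) all check out. So the two proofs agree once the central state is in hand; the paper uses the conjugation symmetry as a shortcut, while you carry out the full classification directly.
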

	\begin{proof}
	In \cite{DFY}, it is shown that for any compact quantum group $G$,
	\[\omega: \cO(G) \to {\mathbb C} : u^\pi_{ij} \mapsto \frac{\dim(\pi)}{\dim_q(\pi)} \delta_{ij}\]
	is a central state.
		
	For a unitary fiber functor $F$, take the corresponding quantum group $G$ which is monoidally equivalent to $SU_q(3)$.
	Then by Proposition \ref{thm mon-equiv}, 
	\[\omega_F: u^\pi_{ij} \mapsto \frac{\dim F(\pi)}{\dim_q(\pi)} \delta_{ij}\]
	is a central state on $SU_q(3)$.
	On the other hand, the central states corresponding to $\nu \in X$ is nothing but
	\[\omega_\nu: u^\pi_{ij} \mapsto \frac{{\rm Tr}(K_\nu)}{\dim_q(\pi)} \delta_{ij}.\]
	Put $N := \dim F(x)$ and find $t \geq 0$ such that 
	\[N = q^t + 1 + q^{-t}.\]
	Then $\omega_{t \rho} = \omega_F$. Therefore it is positive definite if and only if $t \rho$ is in the list in Theorem \ref{thm sl3}, hence $t \leq 1$ or $t = 2$.
	\end{proof}

\end{document}